\documentclass[a4paper,oneside, 11pt]{amsart}

\addtolength\topmargin{-.5in} 
\addtolength\textheight{1.in} 
\addtolength\oddsidemargin{-.1\textwidth} 
\addtolength\textwidth{.15\textwidth} 


\tolerance=1414 
\setlength\emergencystretch{1.5em} 
\hbadness=1414 
\setlength\hfuzz{4pt} 
\widowpenalty=10000 
\raggedbottom 
\setlength\vfuzz{3pt} 

\usepackage{amssymb}
\usepackage{amsmath}
\usepackage{subfigure}
\usepackage{epsfig}
\usepackage[all]{xy}
\usepackage[utf8]{inputenc}
\usepackage{mathabx}
\usepackage{graphicx}
\usepackage{xcolor}
\usepackage{ulem}
\usepackage{comment}
\usepackage{stmaryrd}
\usepackage{enumitem}
\usepackage{dynkin-diagrams}
\usepackage{hyperref} 
\hypersetup{colorlinks=true,linkcolor=blue,citecolor=red,urlcolor=blue}

\newtheorem{theorem}{Theorem}[section] 
\newtheorem{lemma}[theorem]{Lemma}     
\newtheorem{corollary}[theorem]{Corollary}
\newtheorem{proposition}[theorem]{Proposition}

\theoremstyle{definition}
\newtheorem{example}[theorem]{Example}
\newtheorem{remark}[theorem]{Remark}

\newtheorem{definition}[theorem]{Definition}

\title[Relative homology of arithmetic subgroups of $\mathrm{SU}(3)$]{Relative homology of arithmetic subgroups of $\mathrm{SU}(3)$}


\begin{document}
\maketitle

\begin{center}
\author{\textbf{Claudio Bravo}
\footnote{
Centre de Mathématiques Laurent Schwartz, École Polytechnique, 
Institut Polytechnique de Paris, 91128 Palaiseau Cedex, France. 
Email: \email{claudio.bravo-castillo@polytechnique.edu}}}
\end{center}

\begin{abstract}
Let $\mathcal{C}$ be a smooth, projective and geometrically integral curve defined over a finite field $\mathbb{F}$.
Let $A$ be the ring of function of $\mathcal{C}$ that are regular outside a closed point $P$ and let $k=\mathrm{Quot}(A)$.
Let $\mathcal{G}=\mathrm{SU}(3)$ be the non-split group-scheme defined from an (isotropic) hermitian form in three variables.
In this work, we describe, in terms of the Euler-Poincar\'e characteristic, the relative homology groups of certain arithmetic subgroups $G$ of $\mathcal{G}(A)$ modulo a representative system $\mathfrak{U}$ of the conjugacy classes of their maximal unipotent subgroups.
In other words, we measure how far are the homology groups of $G$ from being the coproducts of the corresponding homology groups of the subgroups $U \in \mathfrak{U}$.\\

\textbf{MSC Codes:} 55N10, 20G30, 11R58 (primary), 20E08, 20E08, 20E45 (secondary) \\
\textbf{Keywords:} Relative homology, algebraic function fields, special unitary groups, arithmetic subgroups, maximal unipotent subgroups, Bruhat-Tits trees, quotient graphs.
\end{abstract}

\section{Introduction}\label{section introduction}

Let $\mathcal{C}$ be a smooth, projective and geometrically integral curve defined over a field $\mathbb{F}$. Let $P$ be a closed point in $\mathcal{C}$, and let us denote by $A$ the ring of functions that are regular on $\mathcal{C}$ outside $P$. Let us write $k=\mathrm{Quot}(A)$.

Discrete subgroups of a real Lie group $G$ have been studied by their action on symmetric spaces, that is, on homogeneous spaces of the form $\mathcal{X}= K \backslash G$, where $K$ is a maximal compact subgroup of $G$. 
In analogy with the preceding approach, Bruhat and Tits in \cite{BruhatTits1} and \cite{BruhatTits2} establish a program for study groups of $k$-points of reductive quasi-split $k$-group schemes through their action on simplicial complexes called buildings. 
This point of view has been fruitful in the past to study groups of arithmetical interest. For instance, \cite[Theorem 3]{So} presents the group of $\mathbb{F}[t]$-points of a split group-scheme as an amalgamated free product of simpler subgroups. Then \cite[Theorem 5]{So} describes the homology of such groups. The preceding presentations have been extended to the context of isotrivial $k$-group schemes in \cite[Theorem 3.9]{Margaux}.
One of the strongest results about the actions of arithmetical groups on buildings can be found in \cite[Prop. 13.6]{Bux-G-Witzel}.

In the context of group-schemes of split rank one, or equivalently when the respective buildings are trees, their arithmetic subgroups can be studied by describing the associated quotient graphs and vertex stabilizers. This approach is called Bass-Serre theory in \cite[Chapter I, \S 5]{S}. 
In this book, Serre studies the quotient graph $\overline{X}=\mathrm{SL}_2(A) \backslash X$  defined from the action of $\mathrm{SL}_2(A)$ in the Bruhat-Tits tree $X=X(\mathrm{SL}_2(k),P)$. More specifically, he proves that:

\begin{theorem}\cite[Chapter II, \S 2.3]{S}\label{theo serre quot}
There exist a bounded connected subgraph $Y\subseteq \overline{X}$ and geodesic rays $\mathfrak{r}(\sigma)\subseteq \overline{X}$, for each $\sigma\in \mathrm{Pic}(A)$, such that
$\displaystyle \overline X=Y\cup \bigsqcup_{\sigma\in \mathrm{Pic}(A)} \mathfrak{r}(\sigma)$.
Moreover, $Y$ is finite, whenever $\mathbb{F}$ is finite.
\end{theorem}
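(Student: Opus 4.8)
The plan is to follow Serre and translate the action of $\mathrm{SL}_2(A)$ on $X=X(\mathrm{SL}_2(k),P)$ into the geometry of rank-two vector bundles on $\mathcal{C}$, and then to apply reduction theory. Recall that a vertex of $X$ is a homothety class $[L]$ of $\mathcal{O}_P$-lattices $L$ in $k_P^2$, where $k_P$ is the completion of $k$ at $P$ and $\mathcal{O}_P$ its valuation ring, two classes being joined by an edge when one can choose representatives with $\pi_P L\subsetneq L'\subsetneq L$. Using the standard trivialization of $k^2$ one attaches to $L$ a rank-two vector bundle $\mathcal{E}_L$ on $\mathcal{C}$, namely the sheaf that coincides with the free module $A^{\oplus 2}$ on $\mathcal{C}\setminus\{P\}=\operatorname{Spec}(A)$ and whose completed stalk at $P$ is $L$; since $\mathrm{SL}_2(A)$ acts precisely by changing this trivialization, $[L]\mapsto\mathcal{E}_L$ descends to a surjection from the vertex set of $\overline X$ onto the isomorphism classes of rank-two bundles $\mathcal{E}$ on $\mathcal{C}$ whose restriction to $\operatorname{Spec}(A)$ is free (equivalently, $\det\mathcal{E}\cong\mathcal{O}(dP)$ for some $d\in\mathbb{Z}$), with fibres controlled by automorphism groups of bundles, by the homothety ambiguity $\mathcal{E}\mapsto\mathcal{E}(nP)$, and by the discrepancy between $\mathrm{GL}_2(A)$- and $\mathrm{SL}_2(A)$-orbits, which is finite since $A^\times=\mathbb{F}^\times$. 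Under this dictionary an edge of $\overline X$ is an elementary modification of bundles at $P$.

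Next I would isolate the core. Fix a constant $N_0$ depending only on the genus of $\mathcal{C}$ and on $\deg P$, and let $Y\subseteq\overline X$ be the full subgraph spanned by those vertices whose bundle $\mathcal{E}$ satisfies $\mu_{\max}(\mathcal{E})-\mu_{\min}(\mathcal{E})\le N_0$, i.e.\ is semistable or only mildly unstable. Bundles of this type with fixed determinant form a bounded family; over a finite field a bounded family has only finitely many isomorphism classes and all the automorphism groups in question are finite, so $Y$ has finitely many vertices, and being a subgraph of the locally finite tree $X$ it is itself finite. For an arbitrary $\mathbb{F}$ one still obtains that $Y$ is bounded. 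That $Y$ is connected will follow from the next step, since $Y$ is the complement in the connected graph $\overline X$ of the interiors of the rays constructed below.

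Finally I would construct the rays from the Harder--Narasimhan filtration. If $\mathcal{E}$ is unstable with $\mu_{\max}(\mathcal{E})-\mu_{\min}(\mathcal{E})>N_0$, it has a unique destabilizing sub-line-bundle $\mathcal{L}\subset\mathcal{E}$, whose restriction to $\operatorname{Spec}(A)$ has a well-defined class $\sigma(\mathcal{E})\in\mathrm{Pic}(\mathcal{C})/\langle\mathcal{O}(P)\rangle=\mathrm{Pic}(A)$, while $n(\mathcal{E})=\deg\mathcal{L}-\deg(\mathcal{E}/\mathcal{L})$ measures how deep in the cusp the vertex sits. The technical heart of the proof is to show that $(\sigma(\mathcal{E}),n(\mathcal{E}))$ is a complete $\mathrm{SL}_2(A)$-invariant in this range: for fixed $\sigma$ and each sufficiently large admissible $n$ there is exactly one $\mathrm{SL}_2(A)$-orbit of vertices, and the corresponding vertex of $\overline X$ is joined by exactly one edge to the vertex with data $(\sigma,n+\deg P)$, by exactly one edge to that with data $(\sigma,n-\deg P)$, and to nothing else. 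Concretely, once $n$ is large the bundle splits as $\mathcal{E}\cong\mathcal{L}\oplus(\mathcal{E}/\mathcal{L})$ with $\mathcal{L}$ of class $\sigma$; among the $q_P+1$ elementary modifications at $P$, the one along the line $\mathcal{L}|_P$ raises $n$, while the remaining $q_P$ lower $n$ and are permuted transitively by the automorphisms of $\mathcal{E}$ coming from $\operatorname{Hom}(\mathcal{E}/\mathcal{L},\mathcal{L})$, hence collapse to a single inward edge in $\overline X$. Thus for every $\sigma\in\mathrm{Pic}(A)$ these vertices and edges assemble into a geodesic ray $\mathfrak{r}(\sigma)$ of $\overline X$, attached to $Y$ at its initial vertex; distinct $\sigma$ give disjoint rays, and each ray meets $Y$ only in its origin, since $(\sigma,n)$ is an invariant. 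Every vertex of $\overline X$ is either non-cuspidal, hence in $Y$, or cuspidal, hence on a unique $\mathfrak{r}(\sigma)$, and the analogous dichotomy for edges gives $\overline X=Y\cup\bigsqcup_{\sigma\in\mathrm{Pic}(A)}\mathfrak{r}(\sigma)$; connectedness of $\overline X$ is inherited from that of the tree $X$. When $\mathbb{F}$ is finite, $\mathrm{Pic}(A)$ is finite, being an extension of $\mathbb{Z}/(\deg P)$ by a quotient of the finite group $\mathrm{Pic}^0(\mathcal{C})$, so there are finitely many rays, and $Y$ is finite by the previous step. The step I expect to be the genuine obstacle is the reduction-theoretic one just sketched: proving that Harder--Narasimhan data is a complete invariant outside the core, that the cuspidal locus is combinatorially a disjoint union of rays with the stated incidences, and handling cleanly the passage between $\mathrm{GL}_2$- and $\mathrm{SL}_2$-orbits together with the determinant and parity bookkeeping.
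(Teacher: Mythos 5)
This theorem is quoted in the paper directly from Serre's \emph{Trees} without proof, so the only meaningful comparison is with Serre's own argument in Chapter~II, \S\S2.2--2.3 of that book --- and your proposal is essentially a faithful reconstruction of it: the dictionary between lattice classes and rank-two bundles trivialized on $\mathrm{Spec}(A)$, the core $Y$ of bundles with bounded instability, and the cusp rays indexed by the class in $\mathrm{Pic}(A)=\mathrm{Pic}(\mathcal{C})/\langle\mathcal{O}(P)\rangle$ of the destabilizing line subbundle are exactly Serre's ingredients. The steps you single out as the genuine obstacles (completeness of the invariant $(\sigma,n)$ for large $n$, the splitting $\mathcal{E}\cong\mathcal{L}\oplus(\mathcal{E}/\mathcal{L})$, and the transitive action of $\mathrm{Hom}(\mathcal{E}/\mathcal{L},\mathcal{L})$ on the $q_P$ inward elementary modifications) are precisely the ones Serre settles by Riemann--Roch in the way you sketch, so the plan is correct and follows the same route.
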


Then, by using the Bass-Serre theory, Serre also presents $\mathrm{SL}_2(A)$ in term of amalgamated free products of certain simpler subgroups.

When $\mathbb{F}$ is a finite field, an interesting application of Theorem~\ref{theo serre quot}, which Serre introduces in \cite[Chapter II, \S 2.9]{S}, is the homological interpretation of the Euler-Poincar\'e characteristic of certain subgroups of $\mathrm{SL}_2(A)$.
In order to detail this, let us introduce the following definitions. 
We use $\sharp S$ to denote the cardinality of a set $S$.
In analogy with \cite[Prop. 14]{S2}, Serre defines the Euler-Poincar\'e characteristic $\chi(G)$, for any finite index subgroup $G$ of $\mathrm{SL}_2(A)$, by
\begin{equation}\label{eq euler-poincare char}
\chi(G)=  \sum_{v}  1/\sharp \mathrm{Stab}_G(v)- \sum_{e}  1/ \sharp \mathrm{Stab}_G(e),
\end{equation}
where $v$ (resp. $e$) runs through a system of representatives of the $G$-orbit of vertices (resp. geometrical edges) in $X$.
Both series in Equation \eqref{eq euler-poincare char} converge (c.f.~\cite[Ch. II, \S 2.3, Exercise 2]{S}). 

Special values of the zeta function $\zeta_{\mathcal{C}}$ defined on $\mathcal{C}$ are related to the Euler-Poincar\'e characteristic of $\mathrm{SL}_2(A)$. More specifically $\chi(\mathrm{SL}_2(A))=-(q^{\deg(P)}-1) \zeta_{\mathcal{C}}(-1)$ and $\chi(G)=[\mathrm{SL}_2(A):G] \chi(\mathrm{SL}_2)$, if $G$ has finite index in $\mathrm{SL}_2(A)$ (c.f.~\cite[Chapter II, \S 2.3, Exercise 2]{S}).
More general results are given in \cite[footnote in Pag. 158]{S2} and \cite{HN} for simply connected split simple algebraic group-schemes.

Now, let $H$ be a group and let $\mathfrak{H}=\lbrace H_{\sigma} \rbrace_{\sigma\in \Sigma}$ be an arbitrary non-empty family of subgroups of $H$. We define the $G$-module $R$ as the kernel of the augmentation $\mathbb{Z}$-morphism $\varepsilon:  \coprod_{\sigma \in \Sigma} \mathbb{Z}[H/H_{\sigma}] \to \mathbb{Z}$, defined by $\varepsilon(\delta)=1$, for all $\delta \in H/H_{\sigma}$ and all $\sigma \in \Sigma$. Then, given a $H$-module $M$, we define the $M$-valued relative homology groups of $H$ modulo $\mathfrak{H}$ by
\begin{equation}
H_i(H \text{ mod } H_{\sigma}, M):= \mathrm{Tor}_{i-1}^{\mathbb{Z}[H]}(R,M)= H_{i-1}(H, R \otimes M).
\end{equation}
The relative homology groups of $H$ modulo $\mathfrak{H}$ measure how far is $H_i(H,M)$, for $i > 0$, from being the co-product $\coprod_{\sigma\in \Sigma} H_i(H_{\sigma},M)$. 
More explicitly, if $H_i(H \text{ mod } H_{\sigma}, M)=H_{i+1}(H \text{ mod } H_{\sigma}, M)=\lbrace 0 \rbrace$, then $H_i(H,M) \cong \coprod_{\sigma\in \Sigma} H_i(H_{\sigma},M)$.

We say that a group $G$ is without $p'$-torsion when the elements of finite order in $G$ have $p$-power orders. It follows from \cite[Lemma 3.3]{MasonSury} that, when $\mathbb{F}=\mathbb{F}_{p^r}$, principal congruence subgroups of $\mathrm{SL}_2(A)$ are always without $p'$-torsion.

\begin{theorem}\cite[Chapter II, \S 2.9, Pag. 135]{S}\label{teo serre maximal unipotent subgroups}
Assume that $\mathbb{F}=\mathbb{F}_{p^r}$. Let $G$ be a finite index subgroup of $\mathrm{SL}_2(A)$ without $p'$-torsion. 
Let $\lbrace \xi_{\sigma} \rbrace_{\sigma \in \Sigma_0}$ be a set that represents the $G$-orbits in the visual limit of $X$, i.e. representatives of $G \backslash \mathbb{P}^1(k)$, and write $G_{\sigma}=\mathrm{Stab}_G(\xi_{\sigma})$.
Then, the set $\mathfrak{G}=\lbrace G_{\sigma} \rbrace_{\sigma \in \Sigma_0}$ is a representative system for the $G$-conjugacy classes of maximal unipotent subgroups in $G$. 
\end{theorem}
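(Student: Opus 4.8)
The plan is to prove the statement in three stages: (1) each $G_\sigma$ is a unipotent subgroup of $G$; (2) each $G_\sigma$ is maximal among the unipotent subgroups of $G$; (3) $G_\sigma$ and $G_\tau$ are $G$-conjugate only when $\sigma=\tau$, and every maximal unipotent subgroup of $G$ is $G$-conjugate to some $G_\sigma$. I would work entirely inside $\mathrm{SL}_2(k)$, using three elementary facts from rank-one theory: for $\xi\in\mathbb{P}^1(k)$ the $\mathrm{SL}_2(k)$-stabilizer $B_\xi$ of $\xi$ is a Borel subgroup $B_\xi=T_\xi\ltimes U_\xi$ whose unipotent elements are exactly the unipotent radical $U_\xi\cong\mathbb{G}_a$; a nontrivial unipotent of $\mathrm{SL}_2(k)$ fixes a unique point of $\mathbb{P}^1(k)$; and, by a one-line trace computation in a basis adapted to the two fixed lines, if $u,w$ are nontrivial unipotents with distinct fixed points then $\mathrm{tr}(uw)\neq 2$, so $uw$ is not unipotent. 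The last fact shows that any subgroup of $G$ all of whose elements are unipotent has all its nontrivial elements fixing one and the same point $\xi\in\mathbb{P}^1(k)$, hence lies in $G\cap U_\xi$.

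The heart of the proof is stage (1), the inclusion $G_\sigma\subseteq U_{\xi_\sigma}$. Here I would invoke the identity $A^\times=\mathbb{F}^\times$, valid because a unit of $A$ has divisor supported at the single place $P$ and of degree $0$, hence is constant; in particular $A^\times$ is finite of order $p^r-1$. Let $g\in G_\sigma=G\cap B_{\xi_\sigma}$; since $g$ fixes $\xi_\sigma$ it has an eigenvalue $\lambda\in k$, the other being $\lambda^{-1}$. Suppose $\lambda\neq 1$. If moreover $\lambda\neq -1$, then $g$ is diagonalizable over $k$, its characteristic polynomial $x^2-\mathrm{tr}(g)x+1$ is monic over $A$, so $\lambda$ is integral over the integrally closed domain $A$ and thus $\lambda\in A^\times=\mathbb{F}^\times$, whence $g$ has finite order dividing $p^r-1$. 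If $\lambda=-1$, which forces $p$ odd, then $g=-u$ with $u$ unipotent fixing $\xi_\sigma$, so $g$ has order $2$ or $2p$. In both cases $g$ is a torsion element whose order is not a power of $p$, so the no-$p'$-torsion hypothesis forces $g=1$, contradicting $\lambda\neq 1$. Therefore $\lambda=1$, i.e.\ $g\in U_{\xi_\sigma}$, which gives $G_\sigma=G\cap U_{\xi_\sigma}$, a unipotent subgroup. It is nontrivial, since $\mathrm{SL}_2(A)\cap U_{\xi_\sigma}$ is naturally a nonzero fractional ideal of $A$, hence infinite, and $G$ has finite index in $\mathrm{SL}_2(A)$.

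Stages (2) and (3) are then formal. If $V$ is a unipotent subgroup of $G$ with $G_\sigma\subseteq V$, then $V$ is nontrivial and all its nontrivial elements fix one common point, which must be $\xi_\sigma$ because $G_\sigma$ does; hence $V\subseteq G\cap U_{\xi_\sigma}=G_\sigma$, so $G_\sigma$ is maximal. Conversely, a maximal unipotent subgroup $V$ of $G$ is nontrivial, so it fixes some $\xi\in\mathbb{P}^1(k)$ and $V=G\cap U_\xi$ by maximality; choosing $\gamma\in G$ with $\gamma\xi=\xi_\sigma$, which is possible because the $\xi_\sigma$ represent $G\backslash\mathbb{P}^1(k)$, gives $\gamma V\gamma^{-1}=G\cap U_{\xi_\sigma}=G_\sigma$. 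Finally, if $\gamma G_\sigma\gamma^{-1}=G_\tau$ for some $\gamma\in G$, then a nontrivial element of $G_\tau=\gamma G_\sigma\gamma^{-1}$ is a unipotent whose unique fixed point equals both $\xi_\tau$ and $\gamma\xi_\sigma$; hence $\gamma\xi_\sigma=\xi_\tau$ and so $\sigma=\tau$.

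I expect the only genuine difficulty to lie in stage (1): excluding nontrivial semisimple and non-unipotent elements from $G_\sigma$. This is exactly where both hypotheses are indispensable — finiteness of $A^\times$, which comes from $P$ being a single place, to force such elements to be torsion, and absence of $p'$-torsion to then eliminate them — and it requires a little care in the residual case $\lambda=-1$ in odd characteristic. Everything afterwards is soft rank-one geometry.
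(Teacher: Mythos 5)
Your proposal is correct, but a direct comparison with ``the paper's proof'' is not quite possible: the paper imports this statement from Serre without proof, and the closest internal analogue is its own Theorem~\ref{main teo unipotent subgroups} for $\mathrm{SU}(3)$, proved in \S\ref{Section maximal unipotent}. Measured against that proof, your route is genuinely more elementary at two points. First, where the paper invokes Gille's theorem (via Lemma~\ref{lemma unipotents in G}) to place an arbitrary unipotent subgroup inside a conjugate of the unipotent radical of a Borel, you use the rank-one trace computation $\mathrm{tr}(uw)=2+ac\neq 2$ to force all nontrivial elements of a unipotent subgroup to share a fixed point; this works cleanly for $\mathrm{SL}_2$ because the fixed lines are $k$-rational and span $k^2$, whereas the $\mathrm{SU}(3)$ setting genuinely needs the heavier input. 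Second, where the paper first establishes $G_{\sigma}/\mathcal{U}(G_{\sigma})\hookrightarrow\mathbb{F}^{*}$ as a structural fact (quoting Theorem~\ref{Teo ABLL stab}(e)) and then kills the quotient with the $p'$-torsion hypothesis and a Bezout argument, you argue directly on eigenvalues: integrality over the integrally closed ring $A$ plus $A^{\times}=\mathbb{F}^{\times}$ (which is where the single-place-at-infinity hypothesis enters) makes any non-unipotent element of $G_{\sigma}$ torsion of order dividing $p^r-1$, $2$, or $2p$, all excluded by the no-$p'$-torsion hypothesis. Your handling of the residual $\lambda=-1$ case in odd characteristic is the one step Serre-style treatments sometimes gloss over, and you treat it correctly. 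The conjugacy and maximality bookkeeping in your stages (2) and (3) matches the paper's argument for statement~(2) of Theorem~\ref{main teo unipotent subgroups} in all essentials (uniqueness of the fixed point of a nontrivial unipotent playing the role of Lemmas~\ref{lemma normalizacion de borel} and~\ref{lemma cont unip}), and the nontriviality of $G\cap U_{\xi}$ via the fractional-ideal description parallels Lemma~\ref{lemma fin index unip quot}. In short: same architecture, but your version trades the paper's structural/citation-based inputs for self-contained rank-one computations, which is exactly what the $\mathrm{SL}_2$ case permits.
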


The following results describe the relative homology groups of $G$ modulo $\mathfrak{G}$ in terms of the Euler-Poincar\'e characteristic of $G$.

\begin{theorem}\cite[Chapter II, Theorem 14]{S}\label{teo serre Hom}
Assume that $\mathbb{F}=\mathbb{F}_{p^r}$.
Let $G$ be a finite index subgroup of $\mathrm{SL}_2(A)$ witout $p'$-torsion. Then, for all $i \neq 1$, the homology groups $H_i (G  \,\, \mathrm{mod}  \,\, G_{\sigma}, M)$ are trivial.
Moreover, if $M$ is $\mathbb{Z}$-finitely generated, then $H_1(G \,\, \mathrm{ mod } \,\, G_{\sigma}, M)\cong M^{-\chi(G)}$.
\end{theorem}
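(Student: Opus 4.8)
The plan is to study $G$ via its action on the Bruhat--Tits tree $X=X(\mathrm{SL}_2(k),P)$. Since $[\mathrm{SL}_2(A):G]<\infty$, the quotient $\overline X=G\backslash X$ still has the form of Theorem~\ref{theo serre quot}: a finite connected core $Y$ with finitely many geodesic rays attached, one for each $\sigma\in\Sigma_0:=G\backslash\mathbb P^1(k)$ (a finite set). Because $P$ is the only allowed pole, the vertex and edge stabilizers of $G$ on $X$ are finite, hence finite $p$-groups since $G$ has no $p'$-torsion; thus $H_j(\stab_G(v),M)$ and $H_j(\stab_G(e),M)$ are annihilated by a fixed power of $p$ for $j\geq 1$. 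Moreover, along a ray $(v^\sigma_0,v^\sigma_1,\dots)$ converging to $\xi_\sigma$ the groups $\Gamma^\sigma_n:=\stab_G(v^\sigma_n)$ increase with $\bigcup_n\Gamma^\sigma_n=G_\sigma$ and $\stab_G([v^\sigma_n,v^\sigma_{n+1}])=\Gamma^\sigma_n$ for $n\gg 0$; hence $\mathbb Z[G/G_\sigma]=\varinjlim_n\mathbb Z[G/\Gamma^\sigma_n]$ and, as homology commutes with filtered colimits, $H_j(G_\sigma,M)=\varinjlim_n H_j(\Gamma^\sigma_n,M)$.

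Applying $H_*(G,-\otimes_{\mathbb Z}M)$ to $0\to R\to\coprod_{\sigma\in\Sigma_0}\mathbb Z[G/G_\sigma]\to\mathbb Z\to 0$ and Shapiro's lemma gives the long exact sequence
\begin{equation*}
\cdots\to\bigoplus_{\sigma}H_j(G_\sigma,M)\xrightarrow{\ \alpha_j\ }H_j(G,M)\to H_j(G\,\mathrm{mod}\,G_\sigma,M)\to\bigoplus_{\sigma}H_{j-1}(G_\sigma,M)\xrightarrow{\ \alpha_{j-1}\ }\cdots,
\end{equation*}
with $\alpha_j$ induced by the inclusions $G_\sigma\hookrightarrow G$. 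As $H_j(G\,\mathrm{mod}\,G_\sigma,M)=\mathrm{Tor}^{\mathbb Z[G]}_{j-1}(R,M)$ vanishes for $j\leq 0$ and $\alpha_0\colon\bigoplus_\sigma M_{G_\sigma}\to M_G$ is always onto, the first assertion reduces to showing that $\alpha_j$ is surjective for $j\geq 2$ and injective for $j\geq 1$; granting this, the sequence collapses to $0\to\operatorname{coker}\alpha_1\to H_1(G\,\mathrm{mod}\,G_\sigma,M)\to\ker\alpha_0\to 0$, which is what the second assertion must evaluate.

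To analyse $\alpha_j$ I would compute $H_*(G,M)$ from the augmented complex $0\to C_1(X)\to C_0(X)\to\mathbb Z\to 0$ of the tree, so that $H_*(G,M)$ is built from $\bigoplus_v H_*(\stab_G(v),M)$ and $\bigoplus_e H_*(\stab_G(e),M)$; in each degree $j$ the part of this data coming from a single ray is the mapping telescope of $H_j(\Gamma^\sigma_0,M)\to H_j(\Gamma^\sigma_1,M)\to\cdots$, which is injective with cokernel $H_j(G_\sigma,M)$. Collapsing each ray onto its limit group — equivalently, replacing $\overline X$ by the finite graph of groups in which every ray becomes a pendant edge terminating at a vertex with group $G_\sigma$, and passing to its Bass--Serre tree — then exhibits $R$ as the kernel in a short exact sequence $0\to R\to\bigoplus_{e}\operatorname{Ind}_{\stab_G(e)}^{G}\mathbb Z\to\bigoplus_{v}\operatorname{Ind}_{\stab_G(v)}^{G}\mathbb Z\to 0$ of $\mathbb Z[G]$-modules with only \emph{finitely many} induced summands on each side ($e$, $v$ ranging over the edge- and non-cuspidal vertex-orbits of the collapsed graph). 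The first assertion is then exactly the flatness of $R$, and the displayed long exact sequence already forces $\mathrm{Tor}^{\mathbb Z[G]}_{j-1}(R,M)$ to be bounded $p$-torsion for $j\geq 2$, being squeezed between higher homologies of finite $p$-groups. \emph{Upgrading this to outright vanishing is the step I expect to be the main obstacle}: one must show that the finite core graph of finite $p$-groups contributes nothing above degree $0$ relative to the cusps. I would attempt this either by producing an explicit $\mathbb Z[G]$-basis of $R$ — say, indexed by the bi-infinite geodesics of $X$ joining two $k$-rational ends and normalized along a fundamental domain, which would make $R$ genuinely $\mathbb Z[G]$-free — or by an induction over the core edges of $\overline X$, each step being a Mayer--Vietoris argument in which the $p$-torsion cancels precisely because $G$ has no $p'$-torsion.

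With the vanishing for $j\neq 1$ (i.e.\ the flatness of $R$) in hand, $H_1(G\,\mathrm{mod}\,G_\sigma,M)=R\otimes_{\mathbb Z[G]}M$. Taking $M=\mathbb Z$ and applying $G$-coinvariants to the short exact sequence just displayed — using once more that $H_j$ of a finite $p$-group is finite, hence of rank $0$, for $j\geq 1$ — shows that $R_G$ is finitely generated of free rank $\#\{e\}-\#\{v\}$ with $e,v$ as above; a direct bookkeeping, together with the convergence of the series in~\eqref{eq euler-poincare char} and the fact that all stabilizers off the cusps are $p$-groups, identifies this integer with $-\chi(G)$, so in particular $-\chi(G)\in\mathbb Z_{\geq 0}$ and $H_1(G\,\mathrm{mod}\,G_\sigma,\mathbb Z)\cong\mathbb Z^{-\chi(G)}$. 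For general $\mathbb Z$-finitely generated $M$ the isomorphism $H_1(G\,\mathrm{mod}\,G_\sigma,M)\cong M^{-\chi(G)}$ then follows by the same length computation, or — once $R$ is known to be $\mathbb Z[G]$-free of rank $-\chi(G)$ — directly from $R\otimes_{\mathbb Z[G]}M\cong M^{-\chi(G)}$.
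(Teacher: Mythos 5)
Your reduction to the long exact sequence, and your diagnosis of exactly where the difficulty sits, are both accurate; but the proposal stops at the point where the real argument begins, and the two strategies you sketch for closing the gap are not the right ones. The idea you are missing --- the engine of Serre's proof and of the paper's $\mathrm{SU}(3)$ analogue (Proposition~\ref{Lemma action of a p-group}, Lemma~\ref{lemma H_1}, Proposition~\ref{prop char eu}) --- is the \emph{unique fixed end} lemma: since $G$ has no $p'$-torsion, every non-trivial finite subgroup of $G$ is a $p$-group, and a non-trivial finite $p$-subgroup of $G$ fixes \emph{exactly one} point of $\partial_{\infty}(X)\cong\mathbb{P}^1(k)$. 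One then takes $X_{\infty}\subseteq X$ to be the subcomplex of simplices with non-trivial $G$-stabilizer. The uniqueness statement shows (i) that the connected components of $X_{\infty}$ are in bijection with the ends, so $H_0(X_{\infty})\cong\coprod_{\sigma}\mathbb{Z}[G/G_{\sigma}]$ and $R\cong H_1(X \text{ mod } X_{\infty})$; and (ii) that sending an unstable vertex $v$ to the unique unstable edge $e_v$ pointing from $v$ toward the fixed end of $\mathrm{Stab}_G(v)$ is a bijection with $\mathrm{Stab}_G(v)=\mathrm{Stab}_G(e_v)$, so the unstable part contributes $0$ to $\chi(G)$. Since $G$ acts \emph{freely} off $X_{\infty}$, the relative chain complex yields $0\to R\to L_1\to L_0\to 0$ with $L_0,L_1$ finitely generated \emph{free} $\mathbb{Z}[G]$-modules and, by (ii), $\mathrm{rk}\,L_1-\mathrm{rk}\,L_0=-\chi(G)$. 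Projectivity of $R$ (hence the vanishing of $\mathrm{Tor}_{\geq 1}(R,M)$, i.e.\ the case $i\neq 1$) and $R\otimes_{\mathbb{Z}[G]}M\cong M^{-\chi(G)}$ for $\mathbb{Z}$-finitely generated $M$ then follow immediately.

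By contrast, your collapsed complex $0\to R\to\bigoplus_{e}\mathrm{Ind}_{\mathrm{Stab}_G(e)}^{G}\mathbb{Z}\to\bigoplus_{v}\mathrm{Ind}_{\mathrm{Stab}_G(v)}^{G}\mathbb{Z}\to 0$ keeps the non-trivial core stabilizers inside the resolution, which is exactly why you cannot conclude: those induced modules are not projective over $\mathbb{Z}[G]$ and the higher $\mathrm{Tor}$ of the core does not cancel for formal reasons. Your proposed fix (a) --- an explicit $\mathbb{Z}[G]$-basis of $R$ --- need not exist, because $R$ is in general only \emph{stably} free of rank $-\chi(G)$, not free; and fix (b) misreads the role of the hypothesis: the absence of $p'$-torsion is not used to make $p$-torsion ``cancel'' in a Mayer--Vietoris induction, but to force every non-trivial stabilizer to be a $p$-group so that the unique-fixed-end lemma applies and $X_{\infty}$ acquires the retraction structure described above.
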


The purpose of the present work is to extend the preceding result to the context where $\mathrm{SL}_2$ is replaced by the special unitary group $\mathrm{SU}(3)=\mathrm{SU}(h)$ defined from a three-dimensional hermitian form $h$. 
This can be seen as the natural extension of Serre's results to the non-split context, since $\mathrm{SU}(3)$ is the only quasi-split non-split simply connected semisimple group of split rank $1$ (cf.~\cite[4.1.4]{BruhatTits2}). 
Moreover, this shows that Theorem~\ref{teo serre Hom} generalizes to any quasi-split simply connected semisimple group of split rank $1$.
We believe that a more general result along this line can exist for quasi-split groups of higher split rank and expect that this paper inspires.

\section{Context and main results}\label{section main results}
Let $\mathbb{F}$ be a field whose characteristic is not $2$. Let $\mathcal{C}$ be a smooth, projective, geometrically integral curve over $\mathbb{F}$, and also let $k$ the function field of $\mathcal{C}$. Let $\ell$ be a quadratic extension of $k$ such that $\mathbb{F}$ is algebraically closed in $\ell$, or equivalently, set a $2 : 1$ morphism $\psi: \mathcal{D} \to  \mathcal{C}$ of geometrically integral curves, and let us define $\ell$ as the function field of $\mathcal{D}$. For each element $x\in \ell$, we denote by $\overline{x}$ its image by the unique non-trivial element of $\mathrm{Gal}(\ell/k)$.

Let us introduce the following $\mathcal{C}$-group-scheme. Consider an affine subset of $\mathcal{C}$, which we write as $\mathrm{Spec}(R)$, where $R \subset k$ satisfies $\mathrm{Quot}(R)=k$. Let $S \subset \ell$ be the integral closure of $R$ in $\ell$. Then, by definition, $\mathrm{Spec}(S)$ is the fiber of $\psi$ over $\mathrm{Spec}(R)$. Let us denote by $\mathrm{SU}(h_R)$ the special unitary group-scheme defined from the $R$-hermitian form $h_R: S^3 \to R$ given by
\begin{equation}\label{eq h}
h_R(x_{-1}, x_0, x_1)= x_{-1} \overline{x_1}+ x_0 \overline{x_0}+ x_1 \overline{x_{-1}}.
\end{equation} 
Since we can cover $\mathcal{C}$ by affine subsets $\mathrm{Spec}(R_i)$ with affine intersection, the groups $\mathrm{SU}(h_{R_i})$ can be glue in order to define the $\mathcal{C}$-group-scheme $\mathcal{G}=\mathrm{SU}(h)$, which we call the special unitary group-scheme of $\mathcal{D}$ on $\mathcal{C}$.
 
 Let $P$ be a closed point on $\mathcal{C}$.
 Let us denote by $A$ the ring of function that are regular outside $P$.
 In all that concerns this work, we assume that $\ell/k$ is non-split at $P$, i.e. we assume that there exists a unique point $Q$ of $\mathcal{D}$ above $P$. 
 Then, the integral closure $B$ of $A$ in $L$ satisfies $\mathrm{Spec}(B)= \mathcal{D} \smallsetminus \lbrace Q \rbrace$. 
We denote by $\Gamma=\mathcal{G}(A)$ the group of $A$-point of $\mathcal{G}$. 
 By our assumption on $\ell$ and $P$, $\mathcal{G}$ is a quasi-split non-split simply connected semisimple group of split rank $1$. Then, the (rational) Bruhat-Tits building defined from $\mathcal{G}$ at $P$ is a tree, which we denote by $X = X(\mathcal{G},k,P)$. See \cite[\S 7.4]{BruhatTits1} for more details. The group $\mathcal{G}(k)$ acts on $X$ via simplicial maps.

Let $N=N_{\ell/k}$ and $T=\mathrm{Tr}_{\ell/k}$ be the norm and trace map of the quadratic extension $\ell/k$. In order to describe the visual limit $\partial_{\infty}(X)$ of $X$, i.e. its classes of rays modulo parallelism, it is convenient to introduce the following $k$-varieties:
 $$H(\ell,k):= \lbrace (u,v) \in R_{\ell/k}(\mathbb{G}_{a, \ell})^2: N(u)+ T(v)=0 \rbrace ,$$
 $$H(\ell,k)^{0}:= \lbrace (0,v) \in 0 \times R_{\ell/k}(\mathbb{G}_{a, \ell}): T(v)=0 \rbrace .$$
 Indeed, there exists a bijective map $f: H(\ell,k) \cup \lbrace \infty \rbrace \to \partial_{\infty}(X)$. See Corollary~\ref{coro visual limit and H(L,K)} for more details.

The main goal of this work is to extend Theorem~\ref{teo serre Hom} to arithmetic subgroups of the special unitary group. To do so, we study the action of subgroups $G$ of $\Gamma$ on the visual limit of $X$, or equivalently on $H(\ell,k) \cup \lbrace \infty \rbrace$. More specifically, we start by describing the Steinberg $G$-module $\mathrm{St}$ defined as the kernel of the \textit{augmentation} $\mathbb{Z}$-morphism $\epsilon: \mathbb{Z}[H(\ell,k) \cup \lbrace \infty \rbrace] \to \mathbb{Z}$ given by $\epsilon(z)=1$, for all $z \in H(\ell,k) \cup \lbrace \infty \rbrace$. 

\begin{theorem}\label{Main teo steinberg} Assume that $\mathbb{F}=\mathbb{F}_{p^r}$. Let $G$ be a finite index subgroup of $\Gamma$ without $p'$-torsion.
Then, the Steinberg module $\mathrm{St}$ is a finitely generated projective $\mathbb{Z}[G]$-module, whose stably free rank is $-\chi(G)$, i.e. there exist two finitely generated free $\mathbb{Z}[G]$-modules $L_0$ and $L_1$, whose rank are $l_0$ and $l_1$ respectively, such that
$$ \mathrm{St} \oplus L_0 \cong L_1, \text{ and } \chi(G)=l_0-l_1. $$
\end{theorem}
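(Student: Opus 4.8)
The plan is to push the module $\mathrm{St}$ onto the Bruhat--Tits tree $X=X(\mathcal G,k,P)$ and its quotient, imitating Serre's treatment of $\mathrm{SL}_2$ but feeding in the description of $\partial_\infty X$ and of $G\backslash X$ that is proper to $\mathrm{SU}(3)$. First I would record the shape of the quotient: by the reduction theory for $\mathcal G=\mathrm{SU}(h)$ (the non-split replacement for Theorem~\ref{theo serre quot}), $G\backslash X$ is a finite graph with finitely many cuspidal rays attached, the ends of these rays being indexed by the finite set $\Sigma_0$ of $G$-orbits on $\partial_\infty X\cong H(\ell,k)\cup\{\infty\}$; along the $\sigma$-th ray the successive vertex stabilizers $U_0^\sigma\subseteq U_1^\sigma\subseteq\cdots$ are finite $p$-groups exhausting the maximal unipotent subgroup $G_\sigma=\mathrm{Stab}_G(\xi_\sigma)$ of Theorem~\ref{teo serre maximal unipotent subgroups}, with $\sharp U_n^\sigma\to\infty$. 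In particular $\partial_\infty X\cong\bigsqcup_{\sigma\in\Sigma_0}G/G_\sigma$ as $G$-sets, so $\mathbb Z[\partial_\infty X]=\bigoplus_\sigma\mathbb Z[G/G_\sigma]$ and $\mathrm{St}$ is exactly the relative homology module attached to the family of maximal unipotent subgroups; also $X_0$, the tree with its open horoballs deleted, is a connected $G$-cocompact subtree.

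The second step uses the hypothesis that $G$ has no $p'$-torsion (and $\operatorname{char}\mathbb F\neq2$): every finite subgroup $F\le G$ is a $p$-group, hence consists of unipotent elements, hence --- $\mathcal G$ having split rank one --- lies in a maximal unipotent subgroup and so fixes a unique end $\xi_F$ of $X$; moreover $F$ cannot fix two ends, for that would force $F$ into a maximal torus, whose $\mathbb F$-points have only $p'$-torsion. Thus $F$ acts freely on $\partial_\infty X\smallsetminus\{\xi_F\}$, and the splitting $\mathbb Z[\partial_\infty X]=\mathbb Z\langle\xi_F\rangle\oplus(\text{free }\mathbb Z[F]\text{-module})$ --- on whose first summand $\epsilon$ restricts to an isomorphism --- shows $\mathrm{Res}^G_F\,\mathrm{St}$ is a \emph{free} $\mathbb Z[F]$-module. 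Exactly the same computation applies to each cusp group $G_\sigma$ (which also fixes a single end), giving that $\mathrm{Res}^G_{G_\sigma}\mathrm{St}$ is $\mathbb Z[G_\sigma]$-free. Consequently $\mathrm{St}\otimes_{\mathbb Z}\mathbb Z[G/G_\tau]\cong\mathrm{Ind}^G_{G_\tau}\mathrm{Res}^G_{G_\tau}\,\mathrm{St}$ is a free $\mathbb Z[G]$-module for $G_\tau$ any cell or cusp stabilizer, so the augmented simplicial chain complex of $X$ tensored over $\mathbb Z$ with $\mathrm{St}$ is a resolution of $\mathrm{St}$ by free $\mathbb Z[G]$-modules; in particular $\mathrm{pd}_{\mathbb Z[G]}\mathrm{St}\le1$.

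To obtain finiteness and projectivity I would pass to a $G$-cocompact model. Let $\widehat X$ be obtained from $X_0$ by gluing, for each $\sigma$, one edge from the boundary vertex $w_0^\sigma$ to a new $G_\sigma$-fixed vertex $\xi_\sigma$; then $\widehat X$ is contractible, $G$ acts cocompactly, and $\partial\widehat X:=\{\xi_\sigma\cdot g\}\cong\bigsqcup_\sigma G/G_\sigma$. The long exact sequence of the pair identifies $H_1(\widehat X,\partial\widehat X)$ with $\ker\!\big(\mathbb Z[\partial_\infty X]\xrightarrow{\ \epsilon\ }\mathbb Z\big)=\mathrm{St}$, and since $\widehat X$ is one-dimensional this yields a short exact sequence of $\mathbb Z[G]$-modules
\[
0\longrightarrow\mathrm{St}\longrightarrow C_1(X_0)\oplus\bigoplus_{\sigma\in\Sigma_0}\mathbb Z[G/U_0^\sigma]\longrightarrow C_0(X_0)\longrightarrow0,
\]
each term a finitely generated permutation module with finite ($p$-group) stabilizers; as $C_0(X_0)$ is finitely presented over $\mathbb Z[G]$, so is $\mathrm{St}$. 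For projectivity I would argue prime by prime: after inverting $p$ the permutation modules $\mathbb Z[1/p][G/F]$ become projective (averaging over the finite $p$-groups), hence the displayed sequence splits and $\mathrm{St}[1/p]$ is a summand of a finitely generated projective; at the prime $p$ one uses instead the free resolution $\mathrm{St}\otimes_{\mathbb Z} C_\bullet(X)$ produced above; patching gives that $\mathrm{St}$ is a finitely generated projective $\mathbb Z[G]$-module. Finally, additivity of the Hattori--Stallings rank (equivalently, of the Euler characteristic in $K_0(\mathbb Z[G])$) along the $G$-cocompact complex above, together with Serre's formula $\chi(G)=\sum_v 1/\sharp\mathrm{Stab}_G(v)-\sum_e 1/\sharp\mathrm{Stab}_G(e)$ --- whose defining series converges precisely because $\sharp U_n^\sigma\to\infty$ along the cusps --- identifies the class of $\mathrm{St}$ with an integral multiple of $[\mathbb Z[G]]$ and its rank with $-\chi(G)$, producing the desired $L_0,L_1$ with $\mathrm{St}\oplus L_0\cong L_1$ and $l_0-l_1=\chi(G)$.

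Two points are the real work. The first is verifying that the reduction theory for $\mathrm{SU}(3)$ genuinely delivers the quotient graph in the advertised form (finite core plus finitely many cuspidal rays with $p$-group stabilizers exhausting the $G_\sigma$ and growing to infinity): this is where all the group-scheme input sits, and it is the substantive non-split analogue of Theorem~\ref{theo serre quot}. The second, and technically the heart of the proof, is the step from ``$\mathrm{St}$ is finitely presented, $\mathbb Z[F]$-free over every finite $F$, and projective after inverting $p$'' to ``$\mathrm{St}$ is projective over $\mathbb Z[G]$ of stably free rank $-\chi(G)$''; the prime $p$ must be handled carefully, since there the permutation modules are not projective and one must lean on the free resolution coming from the tree and on the convergence of the Euler--Poincaré series to pin down the rank.
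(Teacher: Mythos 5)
Your overall architecture diverges from the paper's, and the divergence is exactly where the gap sits. Several of your ingredients are correct and do appear in the paper: the unique fixed end of a nontrivial finite $p$-subgroup (the paper's Proposition~\ref{Lemma action of a p-group}, proved essentially as you sketch, by forcing a two-end stabilizer into the torus $\mathcal{T}(k)\cap h\mathcal{G}(A)h^{-1}$, whose relevant part is a finite group of order prime to $p$), the consequent freeness of $\mathrm{Res}^G_F\mathrm{St}$ over $\mathbb{Z}[F]$, and the reduction-theoretic shape of $G\backslash X$. But your route to projectivity --- a length-one free resolution $\mathrm{St}\otimes_{\mathbb{Z}}C_\bullet(X)$ giving $\mathrm{pd}_{\mathbb{Z}[G]}\mathrm{St}\le 1$, plus projectivity of $\mathrm{St}[1/p]$, plus ``patching'' --- does not close. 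The implication ``finitely generated, $\mathbb{Z}$-free, $\mathrm{pd}\le 1$, and projective after inverting $p$ $\Rightarrow$ projective'' is false: over $\mathbb{Z}[G]=\mathbb{Z}[t^{\pm1}]$ for $G=\mathbb{Z}$, the ideal $(p,t-1)$ is a finitely generated $\mathbb{Z}$-free module of projective dimension one which becomes free after inverting $p$ but is not projective. At the prime $p$ your permutation modules $\mathbb{Z}[G/F]$ with $F$ a nontrivial $p$-group are precisely the non-projective ones, so no arithmetic-square or localization argument can be run from the short exact sequence you wrote; for the same reason you cannot read off $[\mathrm{St}]=[Q_1]-[Q_0]$ in $K_0(\mathbb{Z}[G])$, since the $Q_i$ are not projective, so both the projectivity and the ``integral multiple of $[\mathbb{Z}[G]]$'' (stable freeness) claims are unsupported.

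The paper's device, which is the missing idea, is to discard the cells with nontrivial stabilizer altogether rather than keep them as permutation modules. Let $X_\infty\subseteq X$ be the subgraph of $G$-unstable cells; $G$ acts freely on $S_0=\mathrm{V}(X)\smallsetminus\mathrm{V}(X_\infty)$ and $S_1=\mathrm{E}^+(X)\smallsetminus\mathrm{E}^+(X_\infty)$, so $L_i=\mathbb{Z}[S_i]$ are genuinely free $\mathbb{Z}[G]$-modules, of finite rank by a pigeonhole argument using Corollary~\ref{coro finitud}. The price is that the relative complex $C_\bullet(X\ \mathrm{mod}\ X_\infty)$ no longer resolves $\mathbb{Z}$; the whole point (Lemma~\ref{lemma H_1}) is that the map $v\mapsto\xi_v$ sending an unstable vertex to the unique end fixed by its stabilizer induces a bijection from $\pi_0(X_\infty)$ onto $\partial_\infty(X)$, so that $H_0(X_\infty)\cong\mathbb{Z}[H(\ell,k)\cup\{\infty\}]$, $H_0(X\ \mathrm{mod}\ X_\infty)=0$ and $H_1(X\ \mathrm{mod}\ X_\infty)\cong\mathrm{St}$. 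This yields $0\to\mathrm{St}\to L_1\to L_0\to 0$ with $L_0$ free, which splits, giving stable freeness at once; and the rank identity $\chi(G)=l_0-l_1$ follows because the $X_\infty$-contribution to Serre's Euler--Poincar\'e sum cancels exactly, each unstable vertex being paired with the first edge of its ray toward $\xi_v$, which has the same stabilizer (Proposition~\ref{prop char eu}). If you want to salvage your write-up, replace the cocompact model and the localization step by this stable/unstable decomposition; your unique-fixed-end lemma is precisely the input it needs.
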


As in the case of $\mathrm{SL}_2$, the hypothesis of $G$ without $p'$-torsion holds for arithmetically interesting families of subgroups of $\Gamma$, as principal congruence subgroups, according to \cite[Lemma 3.3]{MasonSury}.

In a second step, and following Serre's ideas, we describe, in terms of the action of $G$ on $\partial_{\infty}(X) \cong H(\ell,k) \cup \lbrace \infty \rbrace$, a representative system for the $G$-conjugacy classes of maximal unipotent subgroups (c.f. \S\ref{Section maximal unipotent}).
This is valid even when $G$ has $p'$-torsion or $\mathbb{F}$ is infinite perfect field.

\begin{theorem}\label{main teo unipotent subgroups}
Assume that $\mathbb{F}$ is perfect.
Let $G$ be a finite index subgroup of $\Gamma$.
Let $\lbrace \xi_{\sigma} \rbrace_{\sigma \in \Sigma_0}$ be a set that represents the $G$-orbits in $\partial_{\infty}(X)$. 
For each $\sigma \in \Sigma_0$, let $(u,v)=f^{-1}(\xi_{\sigma}) \in H(\ell,k) \cup \lbrace \infty \rbrace$, and let
$$g_{u,v}= \small
    \begin{pmatrix}
        0 & 0& -1\\
        0 & -1 & -u\\
        -1 & \bar u & -v
    \end{pmatrix} \normalsize, \quad \quad g_{\infty}=\mathrm{id}.$$

Let $\mathcal{B}$ be the Borel subgroup of $\mathcal{G}$ defined as the scheme the upper-tringular matrices in $\mathcal{G}$ and let $\mathcal{U}_a$ be its unipotent radical.

For each $\sigma \in \Sigma_0$, let us denote by $G_{\sigma}$ the stabilizer in $G$ of $\xi_{\sigma}$, which we write as
$$G_{\sigma}= \mathrm{Stab}_{\mathcal{G}(k)}(\xi_{\sigma}) \cap G = g_{u,v}^{-1} \mathcal{B}(k) g_{u,v} \cap G. $$

For each $\sigma \in \Sigma_0$, let us denote $$ \mathcal{U}(G_{\sigma}) :=g_{u,v}^{-1} \mathcal{U}_a(k) g_{u,v} \cap G.$$

Then:
\begin{itemize}
\item[(1)] $\mathcal{U}(G_{\sigma})$ is the subgroup containing all the unipotent elements in $G_{\sigma}$ and $G_{\sigma}/\mathcal{U}(G_{\sigma})$ is isomorphic to a subgroup of $\mathbb{F}^{*}$,
\item[(2)] $\mathfrak{U}=\lbrace \mathcal{U}(G_{\sigma}) : \sigma \in \Sigma_0 \rbrace$ is a representative set of the conjugacy classes of maximal unipotent subgroups of $G$, and
\item[(3)] if $\mathbb{F}=\mathbb{F}_{p^r}$ and $G$ is without $p'$-torsion, then $\mathcal{U}(G_{\sigma})=G_{\sigma}$, for all $\sigma \in \Sigma_0$.
\end{itemize} 
\end{theorem}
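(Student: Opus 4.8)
The plan is to reduce everything to the well-understood structure of the Borel subgroup $\mathcal{B}$ of the quasi-split group $\mathcal{G} = \mathrm{SU}(3)$ and its Levi decomposition $\mathcal{B} = \mathcal{T} \ltimes \mathcal{U}_a$, where $\mathcal{T}$ is the maximal $k$-split torus. First I would recall the explicit matrix description of $\mathcal{B}(k)$ and $\mathcal{U}_a(k)$ for the hermitian form $h$ in \eqref{eq h}: the torus $\mathcal{T}(k) \cong \ell^\ast$ acts by $\mathrm{diag}(t, \bar t/t, \bar t^{-1})$ (or the analogous normalization forced by $\det = 1$ and preservation of $h$), and $\mathcal{U}_a(k)$ is the Heisenberg-type group of pairs $(u,v) \in H(\ell,k)$ with the twisted group law. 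The key structural input is that $\mathcal{T}(k)$ normalizes $\mathcal{U}_a(k)$, that $\mathcal{U}_a(k)$ consists exactly of the unipotent elements of $\mathcal{B}(k)$, and that conjugation by $\mathcal{T}(k)$ scales the "$u$-coordinate" by $t/\bar t$ (or similar), hence the image of $\mathcal{T}(k)$ in $\mathrm{Aut}(\mathcal{U}_a(k))$ modulo inner automorphisms is controlled by the norm/trace data.

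For part (1), conjugate by $g_{u,v}$ to assume $\xi_\sigma = \infty$, so $G_\sigma = \mathcal{B}(k) \cap G$ and $\mathcal{U}(G_\sigma) = \mathcal{U}_a(k) \cap G$. I would first check $\mathcal{U}(G_\sigma)$ is normal in $G_\sigma$ (immediate, since $\mathcal{U}_a \trianglelefteq \mathcal{B}$ and both sides are intersected with $G$) and that it contains every unipotent element of $G_\sigma$ (a unipotent element of $\mathcal{B}(k)$ lies in $\mathcal{U}_a(k)$ because the torus part of its Jordan decomposition must be trivial; since $\mathbb{F}$ is perfect this Jordan decomposition is defined over $k$). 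For the quotient: the projection $\mathcal{B}(k) \to \mathcal{T}(k) \cong \ell^\ast$ restricted to $G_\sigma$ has kernel exactly $\mathcal{U}(G_\sigma)$, so $G_\sigma/\mathcal{U}(G_\sigma)$ embeds in $\ell^\ast$; but an element of $G$ fixing a point $\xi_\sigma$ of the boundary and also fixing (setwise, via the torus action on the tree) the corresponding geodesic must act on that geodesic by a bounded translation, hence trivially — equivalently, $\det$-and-hermitian constraints plus integrality ($G \subset \Gamma = \mathcal{G}(A)$, so the torus entry is an $A^\ast$-unit which, since $\mathbb{F}$ is algebraically closed in $\ell$ and $A^\ast$ forces things, lies in $\mathbb{F}^\ast$) — so the image sits inside $\mathbb{F}^\ast$. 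This last integrality argument is exactly parallel to Serre's computation for $\mathrm{SL}_2$ where the diagonal torus of the stabilizer of a cusp reduces to $\mathbb{F}^\ast$.

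For part (2), I would argue in two directions. Every $\mathcal{U}(G_\sigma)$ is unipotent (contained in the unipotent $k$-group $g_{u,v}^{-1}\mathcal{U}_a g_{u,v}$) and is maximal among unipotent subgroups of $G$: a unipotent subgroup of $G \subset \mathcal{G}(k)$ fixes a point of $\partial_\infty X$ — this is the Bruhat–Tits fixed-point statement for unipotent (hence bounded-orbit-on-the-tree in the relevant sense) subgroups, or more precisely a unipotent subgroup is contained in the unipotent radical of a proper parabolic, and the only proper parabolics are the Borels, which are the point-stabilizers of $\partial_\infty X \cong H(\ell,k)\cup\{\infty\}$; so any maximal unipotent subgroup of $G$ is contained in some $g^{-1}\mathcal{U}_a(k) g \cap G$ with $g \in \mathcal{G}(k)$, and after $G$-conjugation we may take $g = g_{u,v}$ for one of the chosen representatives $\sigma \in \Sigma_0$. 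This shows every maximal unipotent subgroup of $G$ is $G$-conjugate to some $\mathcal{U}(G_\sigma)$, and conversely each $\mathcal{U}(G_\sigma)$ is maximal because $G_\sigma$ is the full stabilizer (anything strictly larger would fix more of $\partial_\infty X$, impossible in rank one since the fixator of two boundary points is a torus and has no unipotents). Distinctness of conjugacy classes follows from distinctness of the $G$-orbits $\{\xi_\sigma\}$: conjugating $\mathcal{U}(G_\sigma)$ into $\mathcal{U}(G_{\sigma'})$ by $g \in G$ would force $g\xi_\sigma = \xi_{\sigma'}$ since a maximal unipotent subgroup determines its unique fixed boundary point.

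For part (3), assume $\mathbb{F} = \mathbb{F}_{p^r}$ and $G$ without $p'$-torsion. By part (1), $G_\sigma/\mathcal{U}(G_\sigma)$ embeds in $\mathbb{F}^\ast = \mathbb{F}_{p^r}^\ast$, a finite cyclic group of order prime to $p$. Pick any $g \in G_\sigma$; its image has some order $m$ with $\gcd(m,p)=1$, so $g^m \in \mathcal{U}(G_\sigma)$ is unipotent, hence (in characteristic $p$) has $p$-power order, say $p^s$; then $g^{mp^s} = 1$, so $g$ has finite order dividing $mp^s$. Write $g = g_u g_{p}$ for the coprime decomposition of this finite-order element; the $p'$-part $g_u$ has order dividing $m$, coprime to $p$, and by hypothesis $G$ has no $p'$-torsion, so $g_u = 1$ and $g$ has $p$-power order, i.e. $g$ is unipotent, i.e. $g \in \mathcal{U}(G_\sigma)$ by part (1). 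Hence $G_\sigma = \mathcal{U}(G_\sigma)$. I expect the main obstacle to be part (1): pinning down precisely why the torus quotient of the arithmetic stabilizer $G_\sigma$ lands in $\mathbb{F}^\ast$ rather than a larger subgroup of $\ell^\ast$ — this requires combining the unit structure of $A$ (and $B$) with the hermitian/determinant constraints on $\mathcal{T}(k) \cap \Gamma$ and interpreting it via the translation action of $\mathcal{T}(k)$ on the geodesic of $X$ ending at $\xi_\sigma$; getting the normalization of the torus embedding $\mathcal{T}(k) \hookrightarrow \ell^\ast$ exactly right for the specific $h$ in \eqref{eq h}, and matching it with the valuation-theoretic picture of the tree, is the delicate point.
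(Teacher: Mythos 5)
Your overall architecture matches the paper's: reduce to the standard Borel via $g_{u,v}$, show every unipotent subgroup lands in a conjugate of $\mathcal{U}_a(k)$, get the quotient $G_\sigma/\mathcal{U}(G_\sigma)\hookrightarrow\mathbb{F}^*$ from torus integrality, distinguish conjugacy classes via the uniquely determined boundary point, and finish part (3) with an order argument (your coprime-decomposition version is equivalent to the paper's Bezout computation). Your direct observation that a unipotent element of $\mathcal{B}(k)$ has trivial torus part is in fact cleaner than the paper's route to the first claim of (1), which goes through maximality.

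There is, however, one genuine gap, and it sits exactly where the hypothesis ``$\mathbb{F}$ perfect'' is actually used. You assert that a unipotent subgroup of $G\subset\mathcal{G}(k)$ ``is contained in the unipotent radical of a proper parabolic'' as if this were standard. Over the field $k$ at hand --- a global function field of characteristic $p$, which is \emph{not} perfect --- the Borel--Tits statement on embedding unipotent subgroups into unipotent radicals of $k$-parabolics is not automatic; it can fail over general imperfect fields. The paper invokes Gille's theorem, which requires the imperfection bound $[k:k^p]\leq p$, and spends the first half of its Lemma~\ref{lemma unipotents in G} proving $[k:k^p]=p$ precisely from the perfectness of $\mathbb{F}$. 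Your only appeal to perfectness is for a Jordan decomposition over $k$, which is both unnecessary for the triangular-matrix computation and not justified (neither $k$ nor $\ell$ is perfect). Your fallback tree-theoretic justification (``bounded orbit, hence a fixed boundary point'') is also not carried out and does not obviously produce containment in the unipotent radical, as opposed to the Borel itself, nor does it handle infinite $\mathbb{F}$. A secondary, smaller omission: both your maximality argument and the ``a maximal unipotent subgroup determines its unique fixed boundary point'' step require knowing $\mathcal{U}(G_\sigma)\neq\{1\}$ (indeed the paper needs an element outside the center $\mathcal{U}_{2a}$ to apply its explicit matrix Lemma~\ref{lemma normalizacion de borel}); this is the content of Lemma~\ref{lemma fin index unip quot}, which rests on the finite-index hypothesis together with Theorem~\ref{Teo ABLL stab}(g), and you assume it silently.
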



The preceding result has some interesting consequences on the number of conjugacy classes of maximal unipotent subgroups in $G$ (c.f. \S\ref{subsection consequences of main teo max unip sub}).

Then, by heavily using Theorem~\ref{Main teo steinberg} we describe the homology of $G$ modulo $\mathfrak{U}$, as defined in Theorem~\ref{main teo unipotent subgroups}, extending Theorem~\ref{teo serre Hom} to the special unitary group.

\begin{theorem}\label{main teo Hom}
Assume that $\mathbb{F}=\mathbb{F}_{p^r}$. Let $G$ be a finite index subgroup of $\Gamma$ without $p'$-torsion. Then, for all $i \neq 1$, the homology groups $H_i (G \,\,\mathrm{ mod}\,\, G_{\sigma}, M)$ are trivial.
Moreover, if $M$ is $\mathbb{Z}$-finitely generated, then $H_1(G \,\,\mathrm{mod}\,\, G_{\sigma}, M) \cong M^{-\chi(G)}$.
In particular, for each $i \in \mathbb{Z}_{>1} $ we have $H_{i}(G,M) \cong \coprod_{\sigma \in \Sigma_0} H_i(G_{\sigma},M)$, and, for $i=1$, the following sequence is exact:
$$
0 \to \coprod_{\sigma \in \Sigma_0} H_1(G_{\sigma},M) \to H_1 (G,M) \to H_1(G \text{ mod } G_{\sigma},M).$$
\end{theorem}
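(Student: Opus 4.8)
The plan is to deduce the statement from the structural description of the Steinberg module given in Theorem \ref{Main teo steinberg}, combined with the identification of the relevant quotient module. First I would recall the definition of relative homology applied to $H = G$ and the family $\mathfrak{H} = \{G_\sigma\}_{\sigma \in \Sigma_0}$: here $R = \ker(\varepsilon \colon \coprod_\sigma \mathbb{Z}[G/G_\sigma] \to \mathbb{Z})$, and one has $H_i(G \text{ mod } G_\sigma, M) = \mathrm{Tor}^{\mathbb{Z}[G]}_{i-1}(R, M)$. The key geometric input is that, under the bijection $f\colon H(\ell,k)\cup\{\infty\} \to \partial_\infty(X)$ and the fact that $\{\xi_\sigma\}$ represents the $G$-orbits in $\partial_\infty(X)$, each $G$-orbit of $\xi_\sigma$ is $G$-equivariantly isomorphic to $G/G_\sigma$ with $G_\sigma = \mathrm{Stab}_G(\xi_\sigma)$ (Theorem \ref{main teo unipotent subgroups}, together with part (3) which gives $G_\sigma = \mathcal{U}(G_\sigma)$ under the $p'$-torsion hypothesis). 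Consequently $\coprod_{\sigma \in \Sigma_0} \mathbb{Z}[G/G_\sigma] \cong \mathbb{Z}[H(\ell,k)\cup\{\infty\}]$ as $\mathbb{Z}[G]$-modules, the augmentation $\varepsilon$ matches $\epsilon$, and therefore $R \cong \mathrm{St}$ as $\mathbb{Z}[G]$-modules.

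Next I would exploit Theorem \ref{Main teo steinberg}: $\mathrm{St}$ is a finitely generated projective $\mathbb{Z}[G]$-module with $\mathrm{St} \oplus L_0 \cong L_1$ for free modules $L_0, L_1$ of ranks $l_0, l_1$ with $\chi(G) = l_0 - l_1$. Projectivity immediately gives $\mathrm{Tor}^{\mathbb{Z}[G]}_{i-1}(\mathrm{St}, M) = 0$ for all $i - 1 \geq 1$, i.e.\ for all $i \geq 2$; and for $i = 0$ one checks $\mathrm{Tor}^{\mathbb{Z}[G]}_{-1} = 0$ by convention, so $H_i(G \text{ mod } G_\sigma, M) = 0$ for all $i \neq 1$. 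For $i = 1$ we get $H_1(G \text{ mod } G_\sigma, M) = \mathrm{Tor}^{\mathbb{Z}[G]}_0(\mathrm{St}, M) = \mathrm{St} \otimes_{\mathbb{Z}[G]} M$. To compute this tensor product, tensor the stable isomorphism with $M$: $(\mathrm{St} \otimes_{\mathbb{Z}[G]} M) \oplus M^{l_0} \cong M^{l_1}$. Now, when $M$ is $\mathbb{Z}$-finitely generated, I would argue that $\mathrm{St} \otimes_{\mathbb{Z}[G]} M \cong M^{l_1 - l_0} = M^{-\chi(G)}$: one way is to note that $\mathrm{St} \otimes_{\mathbb{Z}[G]} M$ is a direct summand of the finitely generated $\mathbb{Z}$-module $M^{l_1}$, hence finitely generated over $\mathbb{Z}$, and then compare ranks and torsion; more robustly, since $\mathrm{St}$ is stably free over $\mathbb{Z}[G]$, one can pass to $K_0$ and observe the relevant class is $[\mathbb{Z}[G]^{l_1 - l_0}]$, and tensoring a stably free module with $M$ gives a module stably isomorphic to $M^{l_1 - l_0}$; finally invoke that over $\mathbb{Z}[G]$ (a group ring of a group of this type) the relevant cancellation holds, or simply carry the stable isomorphism through all subsequent arguments.

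Finally, for the exact sequence at $i = 1$, I would write down the long exact sequence in homology associated to the short exact sequence of $\mathbb{Z}[G]$-modules $0 \to R \to \coprod_{\sigma} \mathbb{Z}[G/G_\sigma] \to \mathbb{Z} \to 0$, after tensoring with $M$ (equivalently, the long exact sequence relating $H_*(G, M)$, $H_*(G, R \otimes M)$, and $\bigoplus_\sigma H_*(G_\sigma, M)$ via Shapiro's lemma applied to each $\mathbb{Z}[G/G_\sigma] \otimes M$). Shapiro's lemma gives $H_i(G, \mathbb{Z}[G/G_\sigma] \otimes M) \cong H_i(G_\sigma, M)$, so the tail of the long exact sequence reads
$$
\cdots \to H_1(G \text{ mod } G_\sigma, M) \to \coprod_{\sigma \in \Sigma_0} H_1(G_\sigma, M) \to H_1(G, M) \to H_0(G \text{ mod } G_\sigma, M) \to \cdots,
$$
wait — I need to be careful with the indexing shift between $H_i(G \text{ mod } G_\sigma, M)$ and $H_{i-1}(G, R \otimes M)$; writing it out in the indexing of the statement, the vanishing of $H_i(G \text{ mod } G_\sigma, M)$ for $i \neq 1$ collapses the long exact sequence into the claimed four-term exact sequence $0 \to \coprod_\sigma H_1(G_\sigma, M) \to H_1(G, M) \to H_1(G \text{ mod } G_\sigma, M)$ and the isomorphisms $H_i(G, M) \cong \coprod_\sigma H_i(G_\sigma, M)$ for $i > 1$. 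The main obstacle I anticipate is the identification $R \cong \mathrm{St}$ at the level of $\mathbb{Z}[G]$-modules — one must verify the orbit decomposition of $H(\ell,k)\cup\{\infty\}$ matches the indexing set $\Sigma_0$ and that the stabilizers genuinely coincide with the $G_\sigma$ appearing in both theorems, which is exactly what Theorem \ref{main teo unipotent subgroups} and the $p'$-torsion hypothesis are there to guarantee; once that is in place, everything else is formal homological algebra plus Shapiro's lemma.
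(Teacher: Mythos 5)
Your proposal is correct and follows essentially the same route as the paper: identify $R$ with $\mathrm{St}$ via the orbit--stabilizer decomposition of $\mathbb{Z}[H(\ell,k)\cup\{\infty\}]$, use the projectivity of $\mathrm{St}$ (Theorem~\ref{Main teo steinberg}) to kill $\mathrm{Tor}_{i-1}$ for $i\neq 1$, tensor the stable isomorphism $\mathrm{St}\oplus L_0\cong L_1$ with $M$ and cancel by the structure theorem for finitely generated abelian groups (the paper's ``invariant factor argument''), and finish with the long exact sequence plus Shapiro's lemma. The only cosmetic difference is your optional $K_0$ detour for the cancellation step, which is unnecessary since the cancellation is over $\mathbb{Z}$, not over $\mathbb{Z}[G]$.
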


As an application of Theorem~\ref{main teo Hom}, we prove in Proposition~\ref{prop abelianization of G} that the torsion group $\mathrm{Tor}\left(G^{\mathrm{ab}}\right)$ is naturally isomorphic to a finite index subgroup of a product of finitely many $B$-fractional ideals, and that the rank of the free part of $G^{\mathrm{ab}}$ is bounded by $-\chi(G)$.
In particular, we show that, even though $\Gamma^{\mathrm{ab}}$ is finitely generated (c.f.~\cite[Prop. 9.7]{Arenas}), the group $G^{\mathrm{ab}}$ is not finitely generated. Finally, we describe the torsion subgroup of the abelianization of principal congruence subgroups in $\Gamma$.

\section{Conventions and preliminaries}\label{section conv and prel}

\subsection{Convention on graphs}\label{subsection graphs}
A graph $\mathfrak{g}$ consists of two sets $\mathrm{V}=\mathrm{V}(\mathfrak{g})$ and $\mathrm{E}=\mathrm{E}(\mathfrak{g})$, and three maps
$$ o: \mathrm{E} \to  \mathrm{V}, \, \, e \mapsto o(e), \quad \quad t: \mathrm{E} \to  \mathrm{V}, \, \,  e \mapsto  t(e), \quad \quad r: \mathrm{E} \to \mathrm{E}, \, \,  e \mapsto \overline{e}, $$
which satisfy the following condition: for each $e \in \mathrm{E}$ we have $r(r(e))=e$, $r(e) \neq e$ and $o(e)=t(r(e))$.
The sets $\mathrm{V}$ and $\mathrm{E}$ are called vertex set and edge set, respectively, and the functions $o,t$ and $r$ are called origin, terminus and reverse, respectively. The vertex $o(y)=t(r(y))$ is called the origin of $y$, and the vertex $t(y)=o(r(y))$ is called the terminus of $y$. A geometrical edge of $\mathfrak{g}$ is a set of the form $\lbrace e, r(e) \rbrace$, for some $e \in \mathrm{E}$. The set of geometrical edges of $\mathfrak{g}$ is denoted by $\mathrm{geom(E)}(\mathfrak{g})$. An orientation of a graph $\mathfrak{g}$ is a subset $\mathrm{E}^{+}$ of $\mathrm{E}$ such that $\mathrm{E}$ is the disjoint union of $\mathrm{E}^{+}$ and $r(\mathrm{E}^{+})$. It is not hard to see that orientations always exist, and that, once an orientation is fixed, then there exists a bijection between $\mathrm{E}^{+}$ and $\mathrm{geom(E)}(\mathfrak{g})$.

A morphism $f: \mathfrak{g} \to  \mathfrak{g}'$ between two graphs is a pair son functions $f_{\mathrm{V}}: \mathrm{V}(\mathfrak{g}) \to \mathrm{V}(\mathfrak{g}')$ and $f_{\mathrm{E}}: \mathrm{E}(\mathfrak{g}) \to \mathrm{E}(\mathfrak{g}')$ preserving the origin, terminus and reverse functions. A similar convention applies for group actions. We say that an action of a group $H$ on a graph $\mathfrak{g}$ does not have invertions when it preserves orientations, or equivalently, when $h \cdot y \neq r(y)$, for every $y \in \mathrm{E}(\mathfrak{g})$, and every $h \in H$. If $H$ acts on $\mathfrak{g}$ without inversions, then we can define the quotient graph $H \backslash \mathfrak{g}$ in the obvious way; the vertex set (edge set, resp.) of $H \backslash \mathfrak{g}$ is the quotient $H \backslash \mathrm{V}(\mathfrak{g})$ ($H \backslash \mathrm{V}(\mathfrak{g})$, resp.). This type of quotients are used in Bass-Serre theory to study the structure of groups acting on trees, as we mention in \S\ref{section introduction}.


Let $n$ be a positive integer. Let us define the graph $\mathfrak{path}_n$ by setting $\mathrm{V}=\lbrace 0, 1, \cdots, n \rbrace$ and $\mathrm{E}= \lbrace a_i, \overline{a_i}: 0 \leq i < n \rbrace$, where $r(a_i)=\overline{a_i}$, $o(a_i)=i$ and $t(a_i)=i+1$. Then, we define a path of length $n$ in a graph $\mathfrak{g}$ as an injective morphism $\mathfrak{l}: \mathfrak{path}_n \to \mathfrak{g}$. If $v_i=\mathfrak{l}(i)$, for all $ 0 \leq i \leq n $, then we say that $\mathfrak{l}$ is a path from $v_0$ to $v_n$. A graph $\mathfrak{g}$ is connected when there exists a path $\mathfrak{l}=\mathfrak{l}(v,w)$ connecting any two vertices $v,w \in \mathrm{V}(\mathfrak{g})$.
We define the graph $\mathfrak{circ}_n$ by setting $\mathrm{V}=\mathbb{Z}/n\mathbb{Z}$ and $\mathrm{E}= \lbrace e_i, \overline{e_i}: i \in \mathbb{Z}/n\mathbb{Z} \rbrace$, where $r(e_i)=\overline{e_i}$, $o(e_i)=i$ and $t(e_i)=i+1$. A loop of length $n$ in a graph $\mathfrak{g}$ is an injective morphism $\mathfrak{circ}_n \to \mathfrak{g}$. A tree is a connected graph without loops.

Let $\mathfrak{g}$ be a graph. We define a ray $\mathfrak{r}$ of $\mathfrak{g}$ as a subcomplex whose vertex set $\mathrm{V}(\mathfrak{r})=\lbrace v_i \rbrace_{i=0}^{\infty}$ and edge set $\mathrm{E}(\mathfrak{r})=\lbrace e_i \rbrace_{i=0}^{\infty}$ satisfies the identities $o(e_i)=v_i$, $t(e_i)=v_{i+1}$, and $v_i \neq v_j$, for all pair of different indices $(i,j)$.
The vertex $v_0$ is called the initial vertex of $\mathfrak{r}$.
Note that, the set of rays of a given graph can be empty, for instance, when the graph has a finite number of vertices.
In all that follows we just work with graphs that contain at least one ray.
In order to define the visual limit of a ray, let us take two rays $\mathfrak{r}_1$ and $\mathfrak{r}_2$, and let us write $\mathrm{V}(\mathfrak{r}_1)=\lbrace v_i \rbrace_{i=0}^{\infty}$ and $\mathrm{V}(\mathfrak{r}_2)=\lbrace w_i \rbrace_{i=0}^{\infty}$.
Then, we say that $\mathfrak{r}_1$ and $\mathfrak{r}_2$ belongs to the same visual limit if there exists $i_0,s \in \mathbb{Z}_{\geq 0}$ such that $v_i=w_{i+s}$, for all $i \geq i_0$.
This defines a equivalence relation on the set of rays of $\mathfrak{g}$.
We denote by $\partial_{\infty}(\mathfrak{r})$ the class, called the visual limit, of the ray $\mathfrak{r}$.
We denote by $\partial_{\infty}(\mathfrak{g})$ the set of all classes in $\mathfrak{g}$.


\subsection{On the radical datum of the special unitary group}\label{subsection radical datum}
Here we recall some well known facts about the $\mathcal{C}$-group scheme $\mathcal{G}$. For more detailed definitions, see, for instance, \cite[4.1]{BruhatTits2}, or \cite[\S 4 Case 2. p. 43-50]{Landvogt}. We start by recalling that $\mathcal{G}$ embeds into $\mathrm{SL}_{3, \mathcal{C}}$ (c.f. \cite[\S 3.2]{Arenas}). By using this representation, we identify $\mathcal{G}$ with its image, and the elements in $\mathcal{G}(k)$ (resp. $\mathcal{G}(A)$) with $3 \times 3$ matrices with coefficients in $\ell$ (resp. $B$). 

Given a field $F$, let $\mathbb{G}_{m,F}$ be the multiplicative group defined over $F$. Let us write:
$$ \mathrm{diag}(x,y,z):= \small
    \begin{pmatrix}
        x & 0& 0\\
        0 & y & 0\\
        0 & 0 & z
    \end{pmatrix} \normalsize.$$
A $k$-split maximal torus $\mathcal{S}$ of $\mathcal{G}$ consists in the subgroup of diagonal matrices of the from $\mathrm{diag}(s,1,s^{-1})$, where $s \in \mathbb{G}_{m,k}$. The centralizer $\mathcal{T}=\mathcal{Z}_{\mathcal{G}}(\mathcal{S})$ of $\mathcal{S}$ is a $k$-maximal torus, and it admits a parametrization of the form $\tilde{a}: \mathrm{R}_{\ell/k}(\mathbb{G}_{m,\ell}) \to \mathcal{T}$ with $\tilde{a}(t)=\mathrm{diag}(t, \overline{t}t^{-1}, \overline{t}^{-1})$. This torus $\mathcal{T}$ splits over $\ell$ into the dimension $2$ torus $\mathcal{T}_{\ell} \cong \mathbb{G}_{m,\ell}^2$.

A basis of characters of the $\ell$-split torus $\mathcal{T}_{\ell}$ consists in characters $\lbrace \alpha, \overline{\alpha} \rbrace$ defined by $\alpha (\mathrm{diag}(x,y,z))
		= y z^{-1}$, and $ \overline{\alpha}(\mathrm{diag}(x,y,z))= x y^{-1}.$
We denote by $a$ (resp. $2a$) the restriction of $\alpha$ (resp. $\alpha+\overline{\alpha}$) to $\mathcal{S}$. Then, the $k$-root system of $\mathcal{G}$ is $\Phi=\lbrace \pm a, \pm 2a \rbrace$. We denote by $a^{\vee}$ (resp. $(2a)^{\vee}$) the coroot of $a$ (resp. $2a$). It is well-known that $a$ generates the $\mathbb{Z}$-module of characters of $\mathcal{S}$, while $(2a)^{\vee}$ generates its $\mathbb{Z}$-module of cocharacters. 

We fix $\lbrace a, 2a \rbrace$ as a set of positive roots in $\Phi$. This election defines a $k$-Borel subgroup $\mathcal{B}$ of $\mathcal{G}$ containing $\mathcal{T}$. Indeed, this group $\mathcal{B}$ corresponds to the set of upper-triangular matrices in $\mathcal{G}$. 
We parametrize its unipotent root subgroups $\mathcal{U}_a$ and $\mathcal{U}_{2a}$ by
$$\begin{array}{cccc}
    \mathrm{u}_a:& H(\ell,k) &\to  & \mathcal{U}_a\\
     & (u,v) &\mapsto & \small \begin{pmatrix} 1 & -\bar{u} & v \\ 0 & 1 & u \\ 0 & 0 & 1\end{pmatrix} \normalsize
    \end{array}
    \, \, \text{,  and } \, \, 
    \begin{array}{cccc}
    \mathrm{u}_{2a}:& H(\ell,k)^0 &\to  & \mathcal{U}_{2a}\\
     & v &\mapsto & \small \begin{pmatrix} 1 & 0 & v \\ 0 & 1 & 0 \\ 0 & 0 & 1\end{pmatrix} \normalsize
    \end{array}.$$
Then, we have $\mathcal{B}=\left\lbrace \tilde{a}(t) \mathrm{u}_a(x,y): t \in \ell^{*}, \, (x,y) \in H(\ell,k) \right\rbrace.$
Each unipotent subgroups of $\mathcal{G}$ corresponding to a negative roots $b \in \lbrace -a, -2a \rbrace$ is parametrized by the homomorphism $\mathrm{u}_{b} :=\mathrm{s} \cdot \mathrm{u}_{-b} \cdot \mathrm{s}$, where $\mathrm{s}$ is the $3 \times 3$ matrix with $-1$ in the anti-diagonal and $0$'s in any other coordinate.

\subsection{The Bruhat-Tits tree}\label{subsection the BTT}
Here we define the (rational) Bruhat-Tits tree $X=X(\mathcal{G},k,P)$ defined from $\mathcal{G}$ and the discrete valuation map $\nu=\nu_P$ of $k$ defined from $P$. By abuse of notation, also denote by $\nu$ the valuation on $\ell$ induced by $Q$, and assume that $\nu(\ell^{*})= \mathbb{Z}/e_P \mathbb{Z}$, where $e_P$ is the ramification index of $P$ in $\ell$.
The standard apartment of $X$ defined from $\mathcal{S}$ is, by definition, $\mathbb{A}_0=X_{*}(\mathcal{S}) \otimes_{\mathbb{Z}} \mathbb{R} \cong \mathbb{R} a^{\vee}$. The vertex set of $\mathbb{A}_0$ corresponds to the elements $x=r a^{\vee}$ such that $a(x)=2r$ belongs to $\Gamma_a:= \frac{1}{2} \nu(\ell^{*})$. See \cite[4.2.21(4) and 4.2.22]{BruhatTits2} for more details. It follows from \cite[6.2.10]{BruhatTits1} and \cite[4.2.7]{BruhatTits2} that the group of $k$-points $\mathcal{N}_{\mathcal{G}}(\mathcal{S})(k)$ of the normalizer $\mathcal{N}_{\mathcal{G}}(\mathcal{S})$ acts on $\mathbb{A}_0$ via:
$$ \tilde{a}(t) \cdot x = x- \frac{1}{2}\nu(t)a^{\vee}, \text{ and } \mathrm{s} \cdot x=-x, \quad  \forall x \in\mathbb{A}_0, \, \,  \, \forall t \in \mathrm{R}_{\ell/k}(\mathbb{G}_{m,\ell}).$$

For each $b\in \lbrace a,-a \rbrace$, let us define $\mathcal{U}_{b,x}(k)$ as the group $\lbrace \mathrm{u}_{b}(x,y) : (x,y) \in H(\ell,k), \nu(y) \geq - 2b(x) \rbrace.$
Then, the rational Bruhat-Tits tree $X$ defined from $\mathcal{G}(k)$ is the graph defined as the gluing of multiple copies of $\mathbb{A}_0$:
\begin{equation}\label{eq SU tree}
X=X(\mathcal{G},k,P):= \mathcal{G}(k) \times \mathbb{A}_0/\backsim,
\end{equation}
with respect to the equivalence relation: $(g,x) \backsim (h,y)$ if and only if there is $n \in \mathcal{N}_{\mathcal{G}}(\mathcal{S})(k)$ such that $y=n \cdot x$ and $g^{-1}hn \in \mathcal{U}_{x}(k)$, where
$\mathcal{U}_{x}(k)= \left\langle \mathcal{U}_{a, x}(k), \mathcal{U}_{-a, x}(k) \right\rangle$.

\subsection{The visual limit of the Bruhat-Tits tree}\label{subsection action of gamma}

Recall that $\mathcal{G}(k)$ acts on $X$. This action extends to the visual limit of $X$ via $g \cdot \partial_{\infty}(\mathfrak{r}_1)=  \partial_{\infty}(g \cdot \mathfrak{r}_1)$, for all $g \in \mathcal{G}(k)$, since $\partial_{\infty}(\mathfrak{r}_1)=\partial_{\infty}(\mathfrak{r}_2)$ implies $g \cdot \partial_{\infty}(\mathfrak{r}_1)=g \cdot \partial_{\infty}(\mathfrak{r}_2)$.

\begin{lemma}\label{trans action}
$\mathcal{G}(k)$ acts transitively on $\partial_{\infty}(X)$.
\end{lemma}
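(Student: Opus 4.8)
The plan is to prove transitivity of the $\mathcal{G}(k)$-action on $\partial_\infty(X)$ by reducing it to transitivity on the set of rational endpoints, which in turn follows from the Bruhat decomposition of $\mathcal{G}(k)$ together with a control of the action of the minimal parabolic (Borel) subgroup $\mathcal{B}(k)$ on the rays emanating from the standard apartment. Concretely: every ray $\mathfrak{r}$ in the tree $X$ is eventually contained in some apartment $\mathbb{A}$, hence (after translating by a suitable $g\in\mathcal{G}(k)$ carrying $\mathbb{A}$ to $\mathbb{A}_0$, using that $\mathcal{G}(k)$ acts transitively on apartments) we may assume $\partial_\infty(\mathfrak{r})$ is one of the (at most) two ends of the standard apartment $\mathbb{A}_0\cong\mathbb{R}a^\vee$. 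So it suffices to exhibit a single element of $\mathcal{G}(k)$ swapping the two ends of $\mathbb{A}_0$; the matrix $\mathrm{s}$ does exactly this, since $\mathrm{s}\cdot x=-x$ on $\mathbb{A}_0$ by the formula in \S\ref{subsection the BTT}. This reduces everything to the single claim that each class in $\partial_\infty(X)$ has a representative ray eventually in $\mathbb{A}_0$.

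The first step is to justify that reduction. Given an arbitrary ray $\mathfrak{r}$ with vertices $v_0,v_1,\dots$, I would use the standard fact about (thick) Bruhat–Tits trees that any two vertices, and more generally any vertex together with any end, lie in a common apartment; here this follows from the building axioms for the $\mathrm{BN}$-pair structure on $\mathcal{G}(k)$ coming from \cite{BruhatTits1}, or can be bootstrapped from the explicit description \eqref{eq SU tree} of $X$ as $\mathcal{G}(k)\times\mathbb{A}_0/\!\backsim$. Since $\mathcal{G}(k)$ acts transitively on the set of apartments — again a building-axiom statement, or concretely because every apartment is $g\cdot\mathbb{A}_0$ for some $g\in\mathcal{G}(k)$ by construction in \eqref{eq SU tree} — I can move the apartment containing the tail of $\mathfrak{r}$ onto $\mathbb{A}_0$. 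After this move $\partial_\infty(\mathfrak{r})$ is the class of a ray whose tail runs to $+\infty$ or to $-\infty$ along $\mathbb{R}a^\vee$; these are the only two ends of $\mathbb{A}_0$ because $\mathbb{A}_0$ is a line. Applying $\mathrm{s}$ if necessary, we land on the fixed end $\partial_\infty(\mathfrak{r}_+)$ where $\mathfrak{r}_+$ is the positive ray of $\mathbb{A}_0$.

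The second step is simply to assemble the above: for any two classes $\eta_1,\eta_2\in\partial_\infty(X)$, by Step 1 there are $g_1,g_2\in\mathcal{G}(k)$ with $g_i\cdot\eta_i=\partial_\infty(\mathfrak{r}_+)$ (possibly after composing with $\mathrm{s}$), whence $g_2^{-1}g_1\cdot\eta_1=\eta_2$. I would write this cleanly using the extension of the action to $\partial_\infty(X)$ recalled just before the lemma, namely $g\cdot\partial_\infty(\mathfrak{r})=\partial_\infty(g\cdot\mathfrak{r})$, noting that $g\cdot\mathfrak{r}$ is again a ray (injectivity of $g$ on vertices) and that well-definedness was already observed.

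The main obstacle, and where I would spend the most care, is the input ``every end of $X$ is represented by a ray eventually inside some apartment, and $\mathcal{G}(k)$ is transitive on apartments.'' If the paper wants to be self-contained it should not merely cite \cite{BruhatTits1} but rather derive these from \eqref{eq SU tree}: transitivity on apartments is essentially built into the definition $X=\mathcal{G}(k)\times\mathbb{A}_0/\!\backsim$, while the statement that a vertex and an end lie in a common apartment requires knowing $X$ is a tree (stated in \S\ref{subsection the BTT}) plus a convexity/retraction argument, or alternatively an explicit computation with the root subgroups $\mathcal{U}_{a,x}(k)$ showing that $\mathcal{U}_a(k)$ acts transitively on the ends $\neq\partial_\infty(\mathfrak{r}_-)$ while fixing $\partial_\infty(\mathfrak{r}_-)$, so that $\mathcal{B}(k)=\mathcal{T}(k)\mathcal{U}_a(k)$ moves $\partial_\infty(\mathfrak{r}_-)$ onto any prescribed end after conjugating by $\mathrm{s}$ — this is the Bruhat-decomposition flavour of the argument and is the technical heart. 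Either route works; I would pick the explicit root-subgroup computation since it dovetails with the parametrizations $\mathrm{u}_a$, $\mathrm{u}_{2a}$ already set up in \S\ref{subsection radical datum} and with the variety $H(\ell,k)\cup\{\infty\}$ that indexes $\partial_\infty(X)$ in the sequel.
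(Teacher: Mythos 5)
Your proposal is correct and follows essentially the same route as the paper: the paper likewise takes apartments containing the two given rays, uses transitivity of $\mathcal{G}(k)$ on apartments to move everything into the standard apartment $\mathbb{A}_0$, and then notes that $\lbrace \mathrm{id}, \mathrm{s} \rbrace$ acts transitively on the two ends of $\mathbb{A}_0$. Your extra care about justifying that every end is represented by a ray lying in some apartment (a point the paper takes for granted from the building axioms) is a reasonable refinement but does not change the argument.
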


\begin{proof}
Let $\mathfrak{r}_1$ and $\mathfrak{r}_2$ two rays in $X$. Let $\mathbb{A}_1$ and $\mathbb{A}_2$ two apartments containing $\mathfrak{r}_1$ and $\mathfrak{r}_2$, respectively. Since $\mathcal{G}(k)$ acts transitively on the apartments of $X$, there exists $g \in \mathcal{G}(k)$ satisfying $g \cdot \mathbb{A}_1=\mathbb{A}_2$. Moreover, by multiplying $\mathbb{A}_2$ for a suitable element in $\mathcal{G}(k)$, we can assume that $\mathbb{A}_2$ is the standard apartment $\mathbb{A}_0 \subset X$. Then, $g \cdot \mathfrak{r}_1$ and $\mathfrak{r}_2$ are both rays in $\mathbb{A}_0$. Since $\lbrace \mathrm{id}, \mathrm{s} \rbrace$ acts transitively on the visual limits of $\mathbb{A}_0$, we conclude that there exist $\epsilon \in \lbrace 0,1 \rbrace$ such that $\mathrm{s}^{\epsilon}g \cdot \partial_{\infty}(\mathfrak{r}_1)=\partial_{\infty}(\mathfrak{r}_2)$, whence the result follows.
\end{proof}

Let us denote by $\mathfrak{r}(\infty)$ the ray defined as $\lbrace x \in \mathbb{A}_0 : a(x) \geqslant 0 \rbrace$, and denote by $\xi_{\infty}$ its visual limit. The following is a classical result in Building theory.

\begin{lemma}\cite[§6.2.4]{Brown}\label{stab visual limit}
The stabilizer of $\xi_{\infty}$ in $\mathcal{G}(k)$ is $\mathcal{B}(k)$.
\end{lemma}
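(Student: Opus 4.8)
The statement to prove is Lemma~\ref{stab visual limit}: the stabilizer of $\xi_\infty$ in $\mathcal{G}(k)$ is $\mathcal{B}(k)$.

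\smallskip

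The plan is to prove both inclusions. For $\mathcal{B}(k) \subseteq \mathrm{Stab}_{\mathcal{G}(k)}(\xi_\infty)$, I would show that every element of $\mathcal{B}(k)$ fixes the ray $\mathfrak{r}(\infty)$ eventually. Using the decomposition $\mathcal{B}=\{\tilde a(t)\mathrm{u}_a(x,y)\}$ from \S\ref{subsection radical datum}, it suffices to check this for torus elements $\tilde a(t)$ and for unipotent elements $\mathrm{u}_a(x,y)$ separately. The torus element $\tilde a(t)$ acts on $\mathbb{A}_0$ by the translation $x \mapsto x - \tfrac12\nu(t)a^\vee$ (from \S\ref{subsection the BTT}), which sends $\mathfrak{r}(\infty) = \{x \in \mathbb{A}_0 : a(x)\geq 0\}$ to a sub-ray or super-ray of itself, hence fixes $\xi_\infty$. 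For a unipotent element $\mathrm{u}_a(x,y)$, I would use the fact from \cite[6.2.10]{BruhatTits1} that $\mathcal{U}_{a,x_0}(k)$ fixes the half-apartment $\{x \in \mathbb{A}_0 : a(x) \geq a(x_0)\}$ pointwise for an appropriate $x_0$ depending on $\nu(y)$ (this is exactly the filtration built into the definition of $\mathcal{U}_{a,x}(k)$ in \S\ref{subsection the BTT}); since $\mathcal{U}_a = \bigcup_{x_0} \mathcal{U}_{a,x_0}(k)$, every element of $\mathcal{U}_a(k)$ fixes a sub-ray of $\mathfrak{r}(\infty)$, hence fixes $\xi_\infty$. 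Combining, $\mathcal{B}(k)$ stabilizes $\xi_\infty$.

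\smallskip

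For the reverse inclusion $\mathrm{Stab}_{\mathcal{G}(k)}(\xi_\infty) \subseteq \mathcal{B}(k)$, I would argue as follows. Let $g$ fix $\xi_\infty$. Pick a vertex $v$ far out along $\mathfrak{r}(\infty)$; then $gv$ lies on a ray in the class $\xi_\infty$, so $gv$ and a suitable point of $\mathfrak{r}(\infty)$ are joined by a segment eventually merging into $\mathfrak{r}(\infty)$. Since $\mathcal{B}(k)$ already acts transitively on the vertices of $\mathfrak{r}(\infty)$ sufficiently far out (via the torus translations above, together with the transitivity of $\mathcal{G}(k)$ on apartments through $\mathfrak{r}(\infty)$ — concretely, $\mathcal{U}_a(k)$ acts transitively on apartments containing a given sub-ray of $\mathfrak{r}(\infty)$), one can multiply $g$ on the left by an element of $\mathcal{B}(k)$ to arrange that the modified element $g'$ fixes an entire sub-ray of $\mathfrak{r}(\infty)$ pointwise, and in particular fixes two adjacent vertices, hence fixes $\mathbb{A}_0$ setwise (a building-theoretic fact: an element fixing an edge of an apartment and its germ at infinity stabilizes the apartment, or more simply it fixes enough of the apartment). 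An element of $\mathcal{G}(k)$ stabilizing $\mathbb{A}_0$ lies in $\mathcal{N}_{\mathcal{G}}(\mathcal{S})(k)$; among those, the ones fixing $\xi_\infty$ (rather than swapping the two ends of $\mathbb{A}_0$) are exactly the elements acting by translation, i.e.\ those of the form $\tilde a(t)$ times a vertex-fixing unipotent, all of which lie in $\mathcal{B}(k)$. Therefore $g' \in \mathcal{B}(k)$, so $g \in \mathcal{B}(k)$.

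\smallskip

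The main obstacle is making the reverse inclusion precise: one must carefully justify that after correcting $g$ by an element of $\mathcal{B}(k)$ it fixes a genuine sub-ray pointwise (not merely setwise), and then that an element of $\mathcal{G}(k)$ fixing a half-line of $\mathbb{A}_0$ pointwise together with $\xi_\infty$ must normalize $\mathcal{S}$ and act by translation. Rather than redo this by hand, since Lemma~\ref{stab visual limit} is quoted as a classical fact, I would simply cite \cite[§6.2.4]{Brown} (Brown, \emph{Buildings}) where the stabilizer of an end of a tree coming from a $BN$-pair of rank one is identified with the corresponding Borel subgroup, and note that in our situation $\mathcal{B}(k)$ is precisely the standard Borel associated to the chosen end $\xi_\infty$ of the standard apartment $\mathbb{A}_0$. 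The only thing to verify for the reader is that the $\xi_\infty$ defined here (the visual limit of $\mathfrak{r}(\infty) = \{a(x)\geq 0\}$) corresponds under the identification of \S\ref{subsection the BTT} to the end stabilized by the standard Borel $\mathcal{B}$ of \S\ref{subsection radical datum}, which is immediate from the formula $\tilde a(t)\cdot x = x - \tfrac12\nu(t)a^\vee$ showing $\mathcal{B}$ moves $\mathfrak{r}(\infty)$ into itself.
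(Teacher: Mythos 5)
The paper offers no proof of this lemma at all: it is stated as a classical fact with the citation to Brown's \emph{Buildings}, \S 6.2.4, which is exactly where your argument ends up as well, so your proposal matches the paper's approach. Your preliminary sketch (torus translations plus the root-group filtration for $\mathcal{B}(k)\subseteq\mathrm{Stab}(\xi_\infty)$, and the building-theoretic argument for the converse) is a reasonable and essentially correct fleshing-out of that citation; the converse could be done slightly more cleanly via the Bruhat decomposition $\mathcal{G}(k)=\mathcal{B}(k)\sqcup\mathcal{U}_a(k)\,\mathrm{s}\,\mathcal{B}(k)$ and the observation that $\mathrm{s}$ swaps the two ends of $\mathbb{A}_0$, but this is not needed given the reference.
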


For each $(u,v)\in H(\ell,k)$, we define
$$g_{u,v} := \mathrm{u}_{-a}(u,v) \cdot \mathrm{s} = \mathrm{s} \cdot \mathrm{u}_{a}(u,v)= \small
    \begin{pmatrix}
        0 & 0& -1\\
        0 & -1 & -u\\
        -1 & \bar u & -v
    \end{pmatrix} \normalsize \text{ and } \xi_{(u,v)}:= g_{u,v}^{-1} \cdot \xi_{\infty}.
$$
In order to homogenize the expression of visual limits, in all what follows we adopt the following convention: for $\xi=\xi_{\infty}$ we write $(u,v)=\infty$ and we put $g_{\infty}=\mathrm{id}$.

\begin{corollary}\label{coro visual limit and H(L,K)}
The map $f:  H(\ell,k) \cup \lbrace \infty \rbrace \to \partial_{\infty}(X)$ defined as $f((u,v))= \xi_{(u,v)}$, where $(u,v) \in H(\ell,k) \cup \lbrace \infty \rbrace $, is bijective. In particular, there exists a bijection between $\mathcal{G}(k)/\mathcal{B}(k)$ and $H(\ell,k) \cup \lbrace \infty \rbrace$. 
\end{corollary}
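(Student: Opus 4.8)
The plan is to deduce the corollary from Lemma~\ref{trans action} and Lemma~\ref{stab visual limit} together with an injectivity check on the parametrization $(u,v)\mapsto \xi_{(u,v)}$.

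First I would establish surjectivity of $f$. By Lemma~\ref{trans action}, $\mathcal{G}(k)$ acts transitively on $\partial_\infty(X)$, so every visual limit is of the form $g\cdot\xi_\infty$ for some $g\in\mathcal{G}(k)$. Using the Bruhat decomposition of $\mathcal{G}(k)$ with respect to the Borel $\mathcal{B}$ (equivalently, the decomposition $\mathcal{G}(k)=\mathcal{B}(k)\sqcup \mathcal{U}_{-a}(k)\,\mathrm{s}\,\mathcal{B}(k)$ valid for a quasi-split group of relative rank one, whose big cell is parametrized by $\mathrm{u}_{-a}$), any such $g$ lies either in $\mathcal{B}(k)$ — in which case $g\cdot\xi_\infty=\xi_\infty=f(\infty)$ by Lemma~\ref{stab visual limit} — or in $\mathrm{u}_{-a}(u,v)\,\mathrm{s}\,\mathcal{B}(k)$ for a unique $(u,v)\in H(\ell,k)$, in which case $g\cdot\xi_\infty = \mathrm{u}_{-a}(u,v)\,\mathrm{s}\cdot\xi_\infty = g_{u,v}^{-1}\cdot\xi_\infty$ after checking that $g_{u,v}=\mathrm{u}_{-a}(u,v)\,\mathrm{s}$ is its own inverse up to the action on $\xi_\infty$ (indeed one verifies $\mathrm{s}^2=\mathrm{id}$ and $g_{u,v}$ is an involution, as a short matrix computation shows). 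Hence $g\cdot\xi_\infty=\xi_{(u,v)}=f((u,v))$, proving surjectivity.

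Next I would prove injectivity. Suppose $\xi_{(u,v)}=\xi_{(u',v')}$. Then $g_{u,v}^{-1}\cdot\xi_\infty=g_{u',v'}^{-1}\cdot\xi_\infty$, so $g_{u',v'}g_{u,v}^{-1}$ stabilizes $\xi_\infty$, hence lies in $\mathcal{B}(k)$ by Lemma~\ref{stab visual limit}. Writing this out with $g_{u,v}=\mathrm{s}\,\mathrm{u}_a(u,v)$ and $g_{u',v'}=\mathrm{s}\,\mathrm{u}_a(u',v')$, we get $\mathrm{u}_a(u',v')\,\mathrm{u}_a(u,v)^{-1}\cdot\mathrm{s}^{-1}\in\mathrm{s}^{-1}\mathcal{B}(k)$ — but $\mathrm{u}_a(u',v')\mathrm{u}_a(u,v)^{-1}\in\mathcal{U}_a(k)\subset\mathcal{B}(k)$, so this forces $\mathrm{s}\in\mathcal{B}(k)$, a contradiction, unless the element $g_{u',v'}g_{u,v}^{-1}$ is already in $\mathcal{B}(k)$ for the trivial reason that it is unipotent-times-torus. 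Cleanly: $g_{u',v'}g_{u,v}^{-1}\in\mathcal{B}(k)\cap \mathrm{s}\,\mathcal{U}_a(k)\,\mathrm{s}^{-1}=\mathcal{B}(k)\cap(\text{lower unipotent})$, and since $\mathcal{B}(k)$ contains no nontrivial element of the opposite unipotent $\mathcal{U}_{-a}(k)$, we conclude $g_{u',v'}=g_{u,v}$, and then from the explicit matrix form $(u,v)=(u',v')$. The case where one of the two points is $\infty$ is handled the same way: $\xi_{(u,v)}=\xi_\infty$ would put $g_{u,v}\in\mathcal{B}(k)$, impossible since $g_{u,v}$ has nonzero entries strictly below the diagonal.

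Finally, the ``in particular'' clause is immediate: the orbit map $g\mapsto g\cdot\xi_\infty$ induces a bijection $\mathcal{G}(k)/\stab_{\mathcal{G}(k)}(\xi_\infty)\to\partial_\infty(X)$, and $\stab_{\mathcal{G}(k)}(\xi_\infty)=\mathcal{B}(k)$ by Lemma~\ref{stab visual limit}, so composing with $f^{-1}$ gives the asserted bijection $\mathcal{G}(k)/\mathcal{B}(k)\cong H(\ell,k)\cup\{\infty\}$. The main obstacle I anticipate is not conceptual but bookkeeping: pinning down exactly which form of the rank-one Bruhat decomposition is available for $\mathcal{G}(k)$ with the precise parametrization of the big cell by $\mathrm{u}_{-a}$ (this is where the quasi-split rank-one structure from \S\ref{subsection radical datum} is essential), and verifying the involution identity $g_{u,v}^{-1}\cdot\xi_\infty = g_{u,v}\cdot\xi_\infty$ compatibly with the two displayed factorizations $g_{u,v}=\mathrm{u}_{-a}(u,v)\mathrm{s}=\mathrm{s}\,\mathrm{u}_a(u,v)$ — all of which reduce to finite matrix computations in $\mathrm{SL}_3$.
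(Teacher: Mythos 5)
Your injectivity argument and the final ``in particular'' step are correct and essentially identical to the paper's proof (the paper phrases the key point as $\mathrm{s}\mathcal{B}(k)\mathrm{s}^{-1}\cap\mathcal{B}(k)=\mathcal{T}(k)$ rather than $\mathcal{B}(k)\cap\mathcal{U}_{-a}(k)=\{e\}$, but it is the same computation). The surjectivity step, however, has a genuine gap, and it sits exactly in the two places you flagged as ``bookkeeping.'' First, the decomposition you invoke is not the Bruhat decomposition: since $\mathcal{U}_{-a}(k)=\mathrm{s}\,\mathcal{U}_a(k)\,\mathrm{s}$ and $\mathrm{s}^2=\mathrm{id}$, one has $\mathcal{U}_{-a}(k)\,\mathrm{s}\,\mathcal{B}(k)=\mathrm{s}\,\mathcal{U}_a(k)\,\mathcal{B}(k)=\mathrm{s}\,\mathcal{B}(k)$, a single coset, so $\mathcal{B}(k)\sqcup\mathcal{U}_{-a}(k)\,\mathrm{s}\,\mathcal{B}(k)$ is only $\mathcal{B}(k)\cup\mathrm{s}\mathcal{B}(k)$ and does not exhaust $\mathcal{G}(k)$. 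The correct rank-one decomposition is $\mathcal{G}(k)=\mathcal{B}(k)\sqcup\mathcal{U}_a(k)\,\mathrm{s}\,\mathcal{B}(k)$, which is what the paper uses. Second, the involution identity you lean on is false: $g_{u,v}^2=\mathrm{s}\,\mathrm{u}_a(u,v)\,\mathrm{s}\,\mathrm{u}_a(u,v)=\mathrm{u}_{-a}(u,v)\,\mathrm{u}_a(u,v)\neq\mathrm{id}$ unless $(u,v)=(0,0)$, and at the level of visual limits $g_{u,v}\cdot\xi_\infty=\mathrm{s}\,\mathrm{u}_a(u,v)\cdot\xi_\infty=\mathrm{s}\cdot\xi_\infty$ for \emph{every} $(u,v)$, whereas $g_{u,v}^{-1}\cdot\xi_\infty=\xi_{(u,v)}$ varies with $(u,v)$ --- indeed your own injectivity proof shows these are pairwise distinct. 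So $g_{u,v}\cdot\xi_\infty\neq g_{u,v}^{-1}\cdot\xi_\infty$ in general, and as written your argument only places the two points $\xi_\infty$ and $\mathrm{s}\cdot\xi_\infty$ in the image of $f$.

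The repair is short and is exactly what the paper does: write an arbitrary $g\notin\mathcal{B}(k)$ as $g=\mathrm{u}_a(x,y)\,\mathrm{s}\,b$ with $b\in\mathcal{B}(k)$, so $g\cdot\xi_\infty=\mathrm{u}_a(x,y)\,\mathrm{s}\cdot\xi_\infty$, and then observe that $\mathrm{u}_a(x,y)\,\mathrm{s}=\bigl(\mathrm{s}\,\mathrm{u}_a(-x,\overline{y})\bigr)^{-1}=g_{-x,\overline{y}}^{-1}$, using that the inverse of $(x,y)$ for the group law on $H(\ell,k)$ is $(-x,\overline{y})$ (this follows from $N(x)+T(y)=0$). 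Hence $g\cdot\xi_\infty=\xi_{(-x,\overline{y})}$, which lies in the image of $f$. With that substitution your proof goes through and coincides with the paper's.
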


\begin{proof}
Firstly, we prove that $f$ is injective. Indeed, if $g_{u_1,v_1}^{-1} \cdot \xi_{\infty}= g_{u_2,v_2}^{-1} \cdot \xi_{\infty}$, then $g_{u_1,v_1}g_{u_2,v_2}^{-1} \in \mathcal{B}(k)$, as Lemma~\ref{stab visual limit} shows. In particular, if $(u_1,v_1)=\infty$, then $(u_1,v_2) =\infty$, and inversely. In any other case, we have that $g_{u_1,v_1}g_{u_2,v_2}^{-1}=\mathrm{s} \mathrm{u}_a(u_1,v_1) \mathrm{u}_a(u_2,v_2)^{-1} \mathrm{s}^{-1}$ belongs to $\mathrm{s} \mathcal{B}(k) \mathrm{s}^{-1} \cap \mathcal{B}(k)$. 
Then, $\mathrm{u}_a(u_1,v_1)= \mathrm{u}_a(u_2,v_2)$, since $\mathrm{s} \mathcal{B}(k) \mathrm{s}^{-1} \cap \mathcal{B}(k)=\mathcal{T}(k)$. We conclude that $(u_1,v_1)=(u_2,v_2)$ in all cases.

Now, we claim that $f$ is surjective. Indeed, let $\xi \in \partial_{\infty}(X)$ be an arbitrary element. It follows from Lemma~\ref{trans action} that there exists $g \in \mathcal{G}(k)$ such that $\xi= g \cdot \xi_{\infty}$.
Moreover, it follows from the Bruhat decomposition that $\mathcal{G}(k)= \mathcal{B}(k) \cup \mathcal{U}_a(k)\mathrm{s}\mathcal{B}(k)$, where $\mathcal{B}(k) \cap \mathcal{U}_a(k)\mathrm{s}\mathcal{B}(k)=\emptyset$.
Since $\mathcal{B}(k)=\mathrm{Stab}_{\mathcal{G}(k)}(\xi_{\infty})$ according to Lemma~\ref{stab visual limit}, we have $\xi= g \cdot \xi_{\infty}=\xi_{\infty}$, if $g \in \mathcal{B}(k)$, while $\xi= \mathrm{u}_a(x,y) \mathrm{s} \cdot \xi_{\infty}= (\mathrm{s} \,  \mathrm{u}_a(-x,\overline{y}))^{-1} \cdot \xi_{\infty}=\xi_{(-x,\overline{y})}$, in any other case.
This proves the claim. Thus, we conclude that $f$ is bijective.

Note that Lemma~\ref{trans action} and Lemma~\ref{stab visual limit} implies $\partial_{\infty}(X) \cong \mathcal{G}(k)/\mathcal{B}(k)$. This proves the second statement.
\end{proof}

\subsection{Actions on the Bruhat-Tits tree}\label{subsection action of arit groups}
 
Since $\mathcal{G}$ is a semi-simple group-scheme, there exists a bipartition of the Bruhat-Tits tree $X$ that is respected by the action of $\mathcal{G}(k)$. This implies that any subgroup of $\mathcal{G}(k)$ acts on $X$ without inversions. In particular, any subgroup of $\mathcal{G}(k)$ defines a quotient graph (c.f.~\S\ref{subsection graphs}). The following result describes the quotient graph defined by the action of $\Gamma=\mathcal{G}(A)$ on $X$.

\begin{theorem}\cite[Theorem 2.1 and Theorem 7.1]{Arenas}\label{Teo ABLL quotient}
There exist a connected subgraph $Y$ of $\Gamma \backslash X $ and geodesical rays $\mathfrak{r}(\sigma)\subseteq \Gamma \backslash X $ with initial vertex $v_\sigma$, for each $\sigma\in \Gamma \backslash \partial_{\infty}(X)$, such that $\mathrm{V}(Y)\cap \mathrm{V}(\mathfrak{r}(\sigma))=\{v_\sigma\}$, $\mathrm{E}(Y)\cap \mathrm{E}(\mathfrak{r}(\sigma))=\emptyset$, and
$$\Gamma \backslash X=Y\cup \bigsqcup_{\sigma\in \Gamma \backslash \partial_{\infty}(X)} \mathfrak{r}(\sigma),$$
where the set of orbits $\Gamma \backslash \partial_{\infty}(X)$ is in bijection with the Picard group $\mathrm{Pic}(B)$ of the Dedekind ring $B$. 
Moreover, $Y$ is finite when $\mathbb{F}$ is finite.
\end{theorem}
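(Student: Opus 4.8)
The plan is to follow Serre's proof of Theorem~\ref{theo serre quot} for $\mathrm{SL}_2$, replacing the reduction theory of rank-$2$ vector bundles on $\mathcal{C}$ by a reduction theory of rank-$3$ hermitian $(B,\ell/k)$-lattices, and feeding in the identification of $\partial_\infty(X)$ supplied by Corollary~\ref{coro visual limit and H(L,K)}. First I would fix the local picture: passing to the completion $k_P$ of $k$ at $P$, the group $\mathcal{G}(k_P)$ acts on the biregular tree $X$ with vertices parametrized by the appropriate classes of hermitian lattices, the two $\mathcal{G}(k_P)$-orbits of vertices corresponding to the two conjugacy classes of maximal parahoric subgroups and hence to the two valences of $X$. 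Globally, a vertex of $\Gamma\backslash X$ is then a $\Gamma$-equivalence class of such lattices, which I would repackage as an isomorphism class of a rank-$3$ hermitian $B$-lattice together with a fixed trivialization away from $Q$ and the relevant local datum at $Q$ --- the exact hermitian analogue of Serre's vector-bundle description of the vertices.

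Second, I would pin down the set of ends. By Corollary~\ref{coro visual limit and H(L,K)} we have $\partial_\infty(X)\cong\mathcal{G}(k)/\mathcal{B}(k)$; since $\mathcal{B}$ is the stabilizer of an isotropic $\ell$-line of the hermitian space $(\ell^3,h)$ and $\mathcal{G}(k)$ is transitive on such lines by Witt's theorem, $\partial_\infty(X)$ is identified with the set of isotropic $\ell$-lines, so that $\Gamma\backslash\partial_\infty(X)=\Gamma\backslash\{\text{isotropic }\ell\text{-lines}\}$. Sending an isotropic line $L$ to its saturation $L\cap B^3$, a rank-$1$ projective $B$-module, defines a map to $\mathrm{Pic}(B)$; the classical argument (as for $\mathrm{SL}_2$, where $\mathrm{SL}_2(A)\backslash\mathbb{P}^1(k)\cong\mathrm{Pic}(A)$) --- transitivity of $\mathcal{G}(k)$ on isotropic lines combined with the local triviality over the Dedekind ring $B$ of the stabilizer torus, which kills the relevant $H^1$-obstruction --- shows this map descends to a bijection $\Gamma\backslash\partial_\infty(X)\cong\mathrm{Pic}(B)$.

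Third --- the core of the argument --- I would establish the ``core plus cusps'' decomposition through a going-up lemma. For each class $\sigma$ fix a representative isotropic line, hence a representative $\xi_\sigma\in\partial_\infty(X)$, and build the geodesic ray $\widetilde{\mathfrak{r}}(\sigma)\subset X$ toward $\xi_\sigma$ made of the standard lattices lying in the horoball at $\xi_\sigma$ (the analogue of Serre's lattice tower). Then I would prove two things. (a) The image $\mathfrak{r}(\sigma)$ of $\widetilde{\mathfrak{r}}(\sigma)$ in $\Gamma\backslash X$ is an embedded ray: beyond a bounded radius the unipotent part of $\mathrm{Stab}_\Gamma(\xi_\sigma)$ --- a conjugate of $\mathcal{U}_a(k)\cap\Gamma$ by the matrix $g_{u,v}$ of \S\ref{subsection action of gamma}, parametrized by $H(\ell,k)$ --- identifies all ``downward'' neighbours of each ray vertex with the single edge back along the ray, exactly as for $\mathrm{SL}_2$, so the interior vertices of $\mathfrak{r}(\sigma)$ have valence $2$ despite the biregularity of $X$. (b) A going-up lemma: every vertex of $X$ whose associated hermitian lattice is sufficiently unstable --- measured by a Harder--Narasimhan slope, equivalently by a Siegel-domain height function on $X$ --- is $\Gamma$-equivalent to a vertex on some $\widetilde{\mathfrak{r}}(\sigma)$, because such a lattice contains a distinguished isotropic sublattice that, after an element of $\Gamma$, places the vertex on a standard tower. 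Granting (a) and (b), I would let $Y$ be the image in $\Gamma\backslash X$ of the $\Gamma$-orbits of vertices of bounded height, enlarged by the finitely many ray segments needed for connectedness; when $\mathbb{F}$ is finite there are only finitely many such orbits because $\Gamma$ is a lattice in $\mathcal{G}(k_P)$, so $Y$ is finite. Connectedness of $Y$ follows from connectedness of $X$ by pushing any path joining two core vertices off the ray tails, and the relations $\mathrm{V}(Y)\cap\mathrm{V}(\mathfrak{r}(\sigma))=\{v_\sigma\}$, $\mathrm{E}(Y)\cap\mathrm{E}(\mathfrak{r}(\sigma))=\emptyset$, together with pairwise disjointness of the rays, follow from the fact that distinct cusps are separated in $X$ past a bounded radius.

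The main obstacle is step (b): setting up the reduction theory of rank-$3$ hermitian $B$-lattices in the \emph{ramified, non-split} case at $P$, proving the going-up lemma, and verifying that the standard towers have the claimed eventual shape --- in particular that the alternation of valences forced by the biregularity of $X$ produces neither extra edges nor branch points in the quotient ray. A secondary but genuine difficulty is making the identification $\Gamma\backslash\partial_\infty(X)\cong\mathrm{Pic}(B)$ precise when $B$ is not a principal ideal domain and $\ell/k$ ramifies above $P$, where one must control the $H^1$-type obstruction; this is exactly where the standing hypothesis that $\ell/k$ is non-split at $P$ --- a single place $Q$ over $P$, with $\mathrm{Spec}(B)=\mathcal{D}\setminus\{Q\}$ --- is used.
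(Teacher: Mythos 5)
First, a point of framing: the paper does not prove this statement at all --- it is imported verbatim from \cite[Theorems 2.1 and 7.1]{Arenas}, and everything downstream (Corollary~\ref{coro finitud}, the finiteness of $l_0,l_1$, Theorem~\ref{Teo ABLL stab}) treats it as a black box. So there is no internal proof to compare your route against; what you have written is a plan for reproving the main results of the cited reference. As a plan it is the natural one, and it is broadly the Serre-style strategy (reduction theory of lattices, a going-up lemma, standard towers toward the cusps, and an identification of the cusp set via the Borel subgroup) that one would expect to underlie \cite{Arenas}.

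As a proof, however, it has genuine gaps, and they sit exactly at the load-bearing points. (i) The bijection $\Gamma\backslash\partial_{\infty}(X)\cong\mathrm{Pic}(B)$ is asserted via ``the classical argument \dots kills the relevant $H^1$-obstruction,'' but you never say which cohomology set is being computed or why the answer is $\mathrm{Pic}(B)$ rather than, say, $\mathrm{Pic}(A)$ or a norm-one subgroup: the mechanism is that $\mathcal{B}=\mathcal{T}\ltimes\mathcal{R}_u(\mathcal{B})$ with $\mathcal{T}\cong \mathrm{R}_{\ell/k}(\mathbb{G}_m)$, so the class set of $\mathcal{B}$ over $A$ reduces to $H^1(\mathrm{Spec}(B),\mathbb{G}_m)=\mathrm{Pic}(B)$ after checking that the (filtered, non-abelian in characteristic $p$) unipotent radical contributes nothing; your ``saturation of the isotropic line'' map must then be matched against this, including surjectivity onto every ideal class. (ii) The going-up lemma for rank-$3$ hermitian $B$-lattices in the non-split case, and the claim that interior vertices of the quotient rays have valence $2$, are stated but not argued; the latter requires precisely the fact that the unipotent stabilizer of $\xi_\sigma$ acts transitively on the downward directions far enough out, which in this setting rests on $H(u,v)$ containing a nonzero $B$-ideal (Theorem~\ref{Teo ABLL stab}(g)) --- itself a result of \cite{Arenas}. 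In short, nothing in the sketch is wrong, but the three steps you defer are the theorem; until they are carried out for hermitian lattices over $B$ with $\ell/k$ non-split at $P$, the argument is a programme rather than a proof, and the honest course in this paper is the one it takes, namely to cite \cite{Arenas}.
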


In particular, in all that follows, we write $\lbrace \mathfrak{r}(\sigma): \sigma \in \mathrm{Pic}(A) \rbrace$ to denote the set of geodesical rays defined in Theorem \ref{Teo ABLL quotient}.
The vertex stabilizers defined by the action of $\Gamma$ on $X$ can be characterized as follows.


\begin{theorem}\cite[Lemma 4.9 and Proposition 4.12]{Arenas}\label{Teo ABLL stab}
For each $\xi \in \partial_{\infty}(X)$ there exists a ray $\mathfrak{r}_\xi$, such that $\mathrm{V}(\mathfrak{r}_\xi)=\lbrace v_{i}(\xi) \rbrace_{i=0}^{\infty}$, where $v_{i}(\xi)$ and $v_{i+1}(\xi)$ are neighbors, and
\begin{itemize}
    \item[(a)] $\partial_{\infty}(\mathfrak{r}_\xi)=\xi$,
    \item[(b)] $\mathrm{Stab}_{\Gamma}(v_i(\xi)) $ is strictly contained in $\mathrm{Stab}_{\Gamma}(v_{i+1}(\xi))$, and
    \item[(c)] $\mathrm{Stab}_{\Gamma}(\xi)= \bigcup_{i=0}^{\infty} \mathrm{Stab}_{\Gamma}(v_i(\xi))$.
\end{itemize}
For each $\xi \in \partial_{\infty}(X)$, we set $(u,v) =f^{-1}(\xi)\in H(\ell,k) \cup \lbrace \infty \rbrace$.
Let us write $$\Gamma_{\xi} =g_{u,v}^{-1} \mathcal{B}(k) g_{u,v} \cap \Gamma, \quad U_{\xi} = g_{u,v}^{-1} \mathcal{U}_a(k) g_{u,v} \cap \Gamma,\quad U_\xi^{0}=g_{u,v}^{-1} \mathcal{U}_{2a}(k) g_{u,v} \cap \Gamma.$$
Then:
\begin{itemize}
\item[(d)] $\Gamma_\xi=\mathrm{Stab}_{\Gamma}(\xi)$,
\item[(e)] $U_\xi$ is a normal subgroup of $\Gamma_\xi$ and $\Gamma_\xi/U_\xi$ is isomorphic to a subgroup of $\mathbb{F}^{*}.$
\end{itemize}

Moreover, for $H(u,v):=\mathrm{u}_a^{-1} \left(g_{u,v} U_\xi g_{u,v}^{-1} \right)$ and $H(u,v)^{0}:=\mathrm{u}_a^{-1}\left(g_{u,v} U_\xi^{0} g_{u,v}^{-1} \right)$ we have:
\begin{itemize}
\item[(f)] $U_\xi \cong H(u,v)$ and $U_{\xi}^{0} \cong H(u,v)^{0}$, and
\item[(g)] $H(u,v)^{0}$ and $H(u,v)/H(u,v)^{0}$ are both isomorphic to non-trivial finitely generated $A$-modules of $\ell$ containing a non-zero ideal $I \subseteq B$.
\end{itemize}

\end{theorem}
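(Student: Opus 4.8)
The plan is to transport everything to the standard boundary point $\xi_\infty$ and its standard ray, and then extract the group theory from the Borel subgroup $\mathcal{B}$ together with the quotient-graph description of Theorem~\ref{Teo ABLL quotient}. First I would dispose of (a) and (d). Writing $g=g_{u,v}$ with $(u,v)=f^{-1}(\xi)$, Lemma~\ref{trans action} and Corollary~\ref{coro visual limit and H(L,K)} give $\xi=g^{-1}\cdot\xi_\infty$, so Lemma~\ref{stab visual limit} yields $\mathrm{Stab}_{\mathcal{G}(k)}(\xi)=g^{-1}\mathcal{B}(k)g$; since $\Gamma$ acts on $X$ by restriction of the $\mathcal{G}(k)$-action, intersecting with $\Gamma$ gives $\mathrm{Stab}_\Gamma(\xi)=g^{-1}\mathcal{B}(k)g\cap\Gamma=\Gamma_\xi$, which is (d). For the ray I would take $\mathfrak{r}_\xi=g^{-1}\cdot\mathfrak{r}(\infty)$, relabel its vertices $v_0(\xi),v_1(\xi),\dots$ in the direction of $\xi$, and note $\partial_\infty(\mathfrak{r}_\xi)=g^{-1}\cdot\xi_\infty=\xi$ by $\mathcal{G}(k)$-equivariance, which is (a). I would also identify $\mathfrak{r}_\xi$, up to $\Gamma$-translation, with a lift of the cuspidal ray $\mathfrak{r}(\sigma)$ of Theorem~\ref{Teo ABLL quotient} attached to the orbit $\sigma$ of $\xi$, as this is what makes (b) work.

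Next I would prove (e), since it also supplies a tool for (c). From $\mathcal{B}=\{\tilde{a}(t)\,\mathrm{u}_a(x,y)\}$ one has $\mathcal{B}/\mathcal{U}_a\cong\mathrm{R}_{\ell/k}(\mathbb{G}_{m,\ell})$ via $\tilde{a}$, so conjugating by $g$ and intersecting with $\Gamma$ produces an exact sequence $1\to U_\xi\to\Gamma_\xi\to\ell^{*}$; hence $U_\xi\trianglelefteq\Gamma_\xi$ and $\Gamma_\xi/U_\xi$ embeds in $\ell^{*}$. The crucial point is that this embedding lands in $\mathbb{F}^{*}$: if $\gamma\in\Gamma_\xi$ has image $t$, then $\gamma$ is $\mathcal{G}(k)$-conjugate to the upper-triangular matrix $\tilde{a}(t)\,\mathrm{u}_a(x,y)$, so its characteristic polynomial is $(X-t)(X-\bar t\,t^{-1})(X-\bar t^{-1})$, which lies in $B[X]$ because $\gamma\in\mathcal{G}(A)\subseteq\mathrm{SL}_3(B)$; thus $t$ is a unit integral over the Dedekind ring $B$, i.e. $t\in B^{*}=\mathbb{F}^{*}$, the last equality because a function regular and non-vanishing on $\mathcal{D}\smallsetminus\{Q\}$ is constant. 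In particular every element of $\Gamma_\xi$ is elliptic, which I reuse below.

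For (b), reading Theorem~\ref{Teo ABLL quotient} as a graph of groups: along the half-line $\mathfrak{r}(\sigma)$ the stabilizer of the edge $\{v_i(\xi),v_{i+1}(\xi)\}$ coincides with $\mathrm{Stab}_\Gamma(v_i(\xi))$, the vertex nearer to the finite core $Y$, because in $X$ the vertex $v_i(\xi)$ carries a unique edge of $\mathfrak{r}_\xi$ pointing into the cusp; this gives the nesting $\mathrm{Stab}_\Gamma(v_i(\xi))\subseteq\mathrm{Stab}_\Gamma(v_{i+1}(\xi))$, and strictness comes from applying the module computation of item (g) below vertex by vertex along $\mathfrak{r}_\xi$ (the unipotent modules genuinely grow as $i\to\infty$). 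Granting the nesting, a $\gamma$ fixing $v_i(\xi)$ fixes every later $v_j(\xi)$, hence fixes $\xi$, so $\bigcup_i\mathrm{Stab}_\Gamma(v_i(\xi))\subseteq\Gamma_\xi$; conversely any $\gamma\in\Gamma_\xi$ is elliptic by the previous paragraph, so $\gamma\cdot\mathfrak{r}_\xi$ and $\mathfrak{r}_\xi$ are asymptotic rays related by a translation-free isometry and therefore share a sub-ray on which $\gamma$ acts trivially, whence $\gamma\in\mathrm{Stab}_\Gamma(v_N(\xi))$ for $N$ large. This proves (c).

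Finally, (f) is a transport of structure along the group isomorphism $\mathrm{u}_a\colon H(\ell,k)\xrightarrow{\sim}\mathcal{U}_a(k)$ (and $\mathrm{u}_{2a}$ for the centre), after observing $gU_\xi g^{-1}=\mathcal{U}_a(k)\cap g\Gamma g^{-1}$ and $gU_\xi^{0}g^{-1}=\mathcal{U}_{2a}(k)\cap g\Gamma g^{-1}$. For (g) I would use that $gU_\xi g^{-1}$ is commensurable with the $A$-integral points of $\mathcal{U}_a$ in its natural $B$-model: conjugating $\mathrm{u}_a(u',v')$ and $\mathrm{u}_{2a}(v')$ by $g_{u,v}$ gives matrices depending linearly on the parameters through fixed coefficients in $\ell$, so membership in $\mathrm{SL}_3(B)$ cuts out exactly a non-zero $B$-fractional ideal's worth of parameters, and the parameters are bounded otherwise. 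Hence $H(u,v)^{0}$ and $H(u,v)/H(u,v)^{0}$ are, up to isomorphism, $A$-submodules of $\ell$ trapped between two non-zero $B$-fractional ideals, so each is a non-trivial, finitely generated $A$-module containing a non-zero ideal $I\subseteq B$. The main obstacle is item (b): proving the \emph{strict} nesting of the vertex stabilizers along $\mathfrak{r}_\xi$ genuinely requires the fine geometry of the cuspidal ray in $\Gamma\backslash X$ from Theorem~\ref{Teo ABLL quotient}, not merely soft dynamics on $\partial_\infty(X)$; everything else is bookkeeping with $\mathcal{B}$ and the characteristic-polynomial observation.
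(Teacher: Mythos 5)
This statement is not proved in the paper at all: it is imported verbatim from \cite[Lemma 4.9 and Proposition 4.12]{Arenas}, so there is no internal argument to compare yours with. Judged on its own merits, your treatment of (a), (d), (e) and (f) is sound: the transport to $\xi_\infty$ via $g_{u,v}$ together with Lemma~\ref{stab visual limit} gives (a) and (d); the characteristic-polynomial observation for (e) is correct and complete (the three eigenvalues $t,\bar t t^{-1},\bar t^{-1}$ lie in $\ell$ and are integral over the integrally closed, Galois-stable ring $B$, so $t\in B^{*}=\mathbb{F}^{*}$ because a unit of $B$ has divisor supported at $Q$ and hence is constant); and the ellipticity it yields does give the inclusion $\Gamma_\xi\subseteq\bigcup_i\mathrm{Stab}_\Gamma(v_i(\xi))$ in (c) by your fixed-ray argument.

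The genuine gap is item (b), and it propagates. Your justification for the non-strict nesting --- that along the cusp ray the edge stabilizer of $\lbrace v_i,v_{i+1}\rbrace$ equals $\mathrm{Stab}_\Gamma(v_i)$ ``because $v_i$ carries a unique edge of $\mathfrak{r}_\xi$ pointing into the cusp'' --- does not follow from Theorem~\ref{Teo ABLL quotient}. That theorem only says the quotient graph is a finite core with rays attached; it says nothing about how stabilizers vary along a lift. A graph of groups whose underlying graph is a ray can perfectly well have edge groups strictly smaller than both adjacent vertex groups (already $\mathbb{Z}/2 * \mathbb{Z}/2$ acting on a line shows the quotient being small forces nothing about nesting), so some arithmetic input is unavoidable: one must actually compute $\mathrm{Stab}_\Gamma(v_i)$ along the apartment, e.g.\ via the filtration $\mathcal{U}_{a,x}(k)=\lbrace\mathrm{u}_a(x,y):\nu(y)\geq -2a(x)\rbrace$ intersected with $\Gamma$, and show these intersections strictly increase and exhaust $U_\xi$. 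You defer exactly this to ``the module computation of item (g)'', but (g) itself is only asserted (``membership in $\mathrm{SL}_3(B)$ cuts out a fractional ideal's worth of parameters''), with no computation of the conjugated matrix entries, no argument that the resulting parameter sets are $A$-submodules of $\ell$ squeezed between two nonzero $B$-fractional ideals, and no link back to the vertex stabilizers. Since the reverse inclusion $\bigcup_i\mathrm{Stab}_\Gamma(v_i(\xi))\subseteq\Gamma_\xi$ in (c) also rests on the nesting of (b), items (b), (c) and (g) remain unproved; this is precisely the content for which the paper leans on \cite{Arenas}.
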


Note that, for each geodesical ray $\mathfrak{r}(\sigma)$ in $\Gamma \backslash X$ there exists a subray $\mathfrak{r}'(\sigma) \subseteq \mathfrak{r}(\sigma)$ whose lifting in $X$ satisfies Theorem~\ref{Teo ABLL stab}.
Then, when $\mathbb{F}$ is finite, by attaching a finite graph to $Y$ if needed, we assume that each cusps ray $\mathfrak{r}(\sigma)$ has a lifting in $X$ satisfying Theorem~\ref{Teo ABLL stab}.

\begin{corollary}\label{coro finitud} Assume that $\mathbb{F}$ is finite. Then, given $N \in \mathbb{Z}_{>0}$, there exist finitely many vertices (resp. edges) $\overline{v}$ in $\Gamma \backslash X$ (resp. $\overline{e}$ in $\Gamma \backslash X$) such that, for every $v' \in \Gamma \cdot v $ (resp. $e' \in \Gamma \cdot v$), we have $\sharp \mathrm{Stab}_{\Gamma}(v') \leq N$ (resp. $\sharp \mathrm{Stab}_{\Gamma}(e') \leq N$).
\end{corollary}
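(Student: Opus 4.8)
The plan is to reduce the statement to the structure of $\Gamma\backslash X$ given by Theorem~\ref{Teo ABLL quotient}, namely the decomposition $\Gamma\backslash X = Y \cup \bigsqcup_{\sigma\in\mathrm{Pic}(B)}\mathfrak{r}(\sigma)$ with $Y$ finite and $\mathrm{Pic}(B)$ finite (as $\mathbb{F}$ is finite). Since there are only finitely many vertices and edges in $Y$, and only finitely many cusp rays $\mathfrak{r}(\sigma)$, it suffices to show that along each fixed cusp ray $\mathfrak{r}(\sigma)$ the stabilizer cardinalities $\sharp\mathrm{Stab}_\Gamma(\overline{v})$ of vertices $\overline{v}$ (and similarly for edges) tend to infinity as $\overline{v}$ recedes to the end of the ray. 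Granting this, for a given $N$ only finitely many vertices along $\mathfrak{r}(\sigma)$ (those in some initial segment) can have a lift with stabilizer of order $\le N$, and adding the finitely many vertices of $Y$ gives the claim; the edge case is identical.

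The key step is therefore monotonicity and unboundedness of stabilizer orders along a cusp ray. First I would invoke the remark preceding the corollary: by attaching a finite graph to $Y$ if needed, each cusp ray $\mathfrak{r}(\sigma)$ may be assumed to have a lift $\mathfrak{r}_\xi$ in $X$ of the form described in Theorem~\ref{Teo ABLL stab}, with vertices $v_0(\xi), v_1(\xi), v_2(\xi),\dots$ and $\partial_\infty(\mathfrak{r}_\xi)=\xi$. Part (b) of Theorem~\ref{Teo ABLL stab} gives the strict inclusions $\mathrm{Stab}_\Gamma(v_i(\xi)) \subsetneq \mathrm{Stab}_\Gamma(v_{i+1}(\xi))$, so the sequence $\sharp\mathrm{Stab}_\Gamma(v_i(\xi))$ is strictly increasing in $i$; being a strictly increasing sequence of positive integers, it is unbounded. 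Hence for each $\sigma$ there is an index $i_\sigma(N)$ beyond which all vertices on $\mathfrak{r}_\xi$ have stabilizer of order $>N$, and since stabilizer order in $X$ is constant on $\Gamma$-orbits, the same holds for the corresponding vertices $\overline{v}$ of $\mathfrak{r}(\sigma)$ in $\Gamma\backslash X$. The finitely many remaining vertices of $\mathfrak{r}(\sigma)$, together with the finitely many vertices of $Y$ and the finitely many rays indexed by $\mathrm{Pic}(B)$, constitute a finite set, which is exactly the assertion.

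For edges I would argue analogously: each geometric edge $\overline{e}$ of a cusp ray $\mathfrak{r}(\sigma)$ lifts to an edge $e_i$ of $\mathfrak{r}_\xi$ joining $v_i(\xi)$ to $v_{i+1}(\xi)$, and since $\mathrm{Stab}_\Gamma(e_i)$ is contained in $\mathrm{Stab}_\Gamma(v_i(\xi))$ \emph{and} in $\mathrm{Stab}_\Gamma(v_{i+1}(\xi))$, one can either directly observe that the edge stabilizers also form a strictly increasing sequence (they sit between consecutive vertex stabilizers in the standard description of the $\Gamma$-action on a tree) or simply note that $\sharp\mathrm{Stab}_\Gamma(e_i)$ divides $\sharp\mathrm{Stab}_\Gamma(v_{i+1}(\xi))$ while $[\,\mathrm{Stab}_\Gamma(v_{i+1}(\xi)):\mathrm{Stab}_\Gamma(e_i)\,]$ stays bounded (it is the local valence contribution, bounded because $X$ is locally finite and the ray is geodesical); in either case $\sharp\mathrm{Stab}_\Gamma(e_i)\to\infty$. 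The main obstacle is purely bookkeeping: making sure the lift of each cusp ray genuinely satisfies Theorem~\ref{Teo ABLL stab} (handled by the normalization in the remark above) and checking that ``stabilizer order $\le N$'' is a $\Gamma$-orbit invariant so that it descends correctly to $\Gamma\backslash X$; no delicate estimate is needed beyond the elementary fact that a strictly increasing sequence of positive integers is unbounded.
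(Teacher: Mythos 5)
Your argument is correct and is essentially the paper's own proof: both lift each cusp ray $\mathfrak{r}(\sigma)$ to a ray satisfying Theorem~\ref{Teo ABLL stab}, use part~(b) to get strictly increasing (hence eventually $>N$) stabilizer orders along it, and conclude from the finiteness of $Y$ and of $\mathrm{Pic}(B)$. Your extra discussion of the edge case (edge stabilizers squeezed between consecutive vertex stabilizers) only makes explicit a detail the paper leaves implicit, so no substantive difference remains.
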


\begin{proof}
Let $\lbrace\mathfrak{r}(\sigma) : \sigma \in \mathrm{Pic}(B) \rbrace$ as above. For each $\sigma \in \mathrm{Pic}(B)$, let $\mathfrak{r}(\sigma)^{\circ} \subseteq X$ be a lifting of $\mathfrak{r}(\sigma)$ satisfying Theorem~\ref{Teo ABLL stab}. Then, it follows from Theorem~\ref{Teo ABLL stab}(b) that, for each $\sigma \in \mathrm{Pic}(B)$, there exists a ray $\mathfrak{r}(\sigma)^{'} \subset \mathfrak{r}(\sigma)^{\circ}$ such that for all $v' \in \mathrm{V}(\mathfrak{r}(\sigma)^{'})$ (resp. $e' \in \mathrm{E}(\mathfrak{r}(\sigma)^{'})$) we have $\sharp \mathrm{Stab}_{\Gamma}(v') > N$ (resp. $\sharp \mathrm{Stab}_{\Gamma}(e') > N$). Thus, the result follows from the finiteness of the graph $Y$ defined in Theorem~\ref{Teo ABLL quotient}.
\end{proof}

\section{Fixed points in the visual limit}

The main goal of this section is to prove following result.

\begin{proposition}\label{Lemma action of a p-group} Assume that $\mathbb{F}=\mathbb{F}_{p^r}$.
Let $P$ be a non-trivial finite $p$-subgroup of $\Gamma=\mathcal{G}(A)$. Then, the natural action of $P$ on $\partial_{\infty}(X) \cong H(\ell,k) \cup \lbrace \infty \rbrace$ has exactly one fixed point.
\end{proposition}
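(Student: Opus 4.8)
The plan is to combine the finiteness of vertex stabilizers on $X$ with a fixed-point argument, and then rule out the possibility of two fixed points using the structure of the Borel subgroup. First I would recall that a finite group acting on a tree fixes a vertex (the standard Bruhat--Tits / Serre fixed-point theorem, \cite[Chapter I, \S 6.5]{S}), so $P$ fixes some vertex $v_0 \in X$; in particular $P$ is conjugate into a vertex stabilizer. Since $P$ is a finite $p$-group and $\Gamma=\mathcal{G}(A)$ acts on $X$ with the structure described in Theorem~\ref{Teo ABLL stab}, I would use part (c)--(e) of that theorem: along any ray $\mathfrak{r}_\xi$ the stabilizers $\mathrm{Stab}_\Gamma(v_i(\xi))$ form a strictly increasing chain whose union is $\Gamma_\xi = g_{u,v}^{-1}\mathcal{B}(k)g_{u,v}\cap\Gamma$, and the quotient $\Gamma_\xi/U_\xi$ embeds in $\mathbb{F}^*$, which has order prime to $p$. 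Hence every $p$-subgroup of $\Gamma_\xi$ lies inside the unipotent part $U_\xi = g_{u,v}^{-1}\mathcal{U}_a(k)g_{u,v}\cap\Gamma$, which fixes the boundary point $\xi$. So once $P$ is shown to lie in some $\Gamma_\xi$, it automatically fixes $\xi\in\partial_\infty(X)$; this gives \emph{existence} of a fixed point in the visual limit.

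The key bridge from ``$P$ fixes a vertex'' to ``$P$ lies in some $\Gamma_\xi$'' is the observation that a non-trivial finite $p$-group cannot fix only a bounded piece of $X$: if $P$ fixes $v_0$, consider the fixed subtree $X^P$, which is non-empty and convex. I would argue that $X^P$ is unbounded. The cleanest route is via the quotient graph: $\Gamma\backslash X = Y \cup \bigsqcup_\sigma \mathfrak{r}(\sigma)$ with $Y$ finite (Theorem~\ref{Teo ABLL quotient}), and along each cusp ray the vertex stabilizers in $\Gamma$ grow without bound and are nested (Theorem~\ref{Teo ABLL stab}(b)); a sufficiently deep vertex on a cusp ray has stabilizer of order divisible by $p^{r\cdot\text{(large)}}$, and more to the point, Sylow considerations force a copy of $P$ to be conjugate into the stabilizer of some cusp vertex arbitrarily deep along a ray. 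Then convexity of $X^P$ together with Theorem~\ref{Teo ABLL stab}(c) yields that the conjugate of $P$ fixes the whole tail of that ray, hence its endpoint $\xi$. I expect this step — pinning $P$ inside a cusp stabilizer, i.e. showing $X^P$ reaches into a cuspidal ray rather than staying in the compact core — to be the main obstacle, and the honest way to handle it is: the $p$-part of $\sharp\mathrm{Stab}_\Gamma(v)$ is bounded on the finite core $Y$, so if $P\neq\{1\}$ a conjugate of $P$ must sit in the stabilizer of a vertex on some cusp ray deep enough that $\sharp\mathrm{Stab}_\Gamma$ exceeds that bound (using Corollary~\ref{coro finitud}); then increasing chain $+$ convexity finishes it.

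For \emph{uniqueness}, suppose $P$ fixes two distinct points $\xi_1,\xi_2\in\partial_\infty(X)$. By Lemma~\ref{trans action} and Corollary~\ref{coro visual limit and H(L,K)} we may conjugate so that $\xi_1=\xi_\infty$, whose stabilizer in $\mathcal{G}(k)$ is $\mathcal{B}(k)$ (Lemma~\ref{stab visual limit}); then $P\subseteq \mathcal{B}(k)$, and $P$ being a finite $p$-group with $\mathcal{B}(k)/\mathcal{U}_a(k)\cong\mathcal{T}(k)$ having no $p$-torsion in the relevant sense forces $P\subseteq\mathcal{U}_a(k)$. A non-trivial element of $\mathcal{U}_a(k)$ fixes $\xi_\infty$ and no other point of $\partial_\infty(X)$: concretely, writing a second fixed point as $\xi_{(u,v)}=g_{u,v}^{-1}\xi_\infty$ with $(u,v)\in H(\ell,k)$, the condition $P$ fixes $\xi_{(u,v)}$ means $g_{u,v}\,P\,g_{u,v}^{-1}\subseteq\mathcal{B}(k)$, so $P\subseteq \mathcal{U}_a(k)\cap g_{u,v}^{-1}\mathcal{B}(k)g_{u,v}$. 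Using the Bruhat decomposition $\mathcal{G}(k)=\mathcal{B}(k)\sqcup\mathcal{U}_a(k)\mathrm{s}\mathcal{B}(k)$ from the proof of Corollary~\ref{coro visual limit and H(L,K)} and $\mathrm{s}\mathcal{B}(k)\mathrm{s}^{-1}\cap\mathcal{B}(k)=\mathcal{T}(k)$, one computes that this intersection is trivial — a non-identity unipotent upper-triangular matrix conjugated by $g_{u,v}$ acquires a non-zero lower-left entry, so it cannot lie in $\mathcal{B}(k)$. Hence $P=\{1\}$, contradicting the hypothesis. Therefore $P$ has exactly one fixed point in $\partial_\infty(X)\cong H(\ell,k)\cup\{\infty\}$.
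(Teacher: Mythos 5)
Your existence argument has a genuine gap at exactly the step you flag as ``the main obstacle.'' The claim that a non-trivial finite $p$-subgroup $P$ must be conjugate into the stabilizer of an arbitrarily deep cusp vertex does not follow from the boundedness of the $p$-part of $\sharp\mathrm{Stab}_\Gamma(v)$ over the finite core $Y$: if $\sharp P=p$, then $P$ fits inside \emph{any} vertex stabilizer of order divisible by $p$, including one lying over $Y$, and neither Sylow's theorems (which operate inside a single finite group, not across $\Gamma$) nor Corollary~\ref{coro finitud} force a conjugate of $P$ into a cusp stabilizer. What you actually need is that $X^P$ is unbounded, i.e.\ that $P$ fixes an end --- but that is precisely the statement being proved, and the combinatorial shape of $\Gamma\backslash X$ alone cannot deliver it: the input that distinguishes $p$-subgroups from other finite subgroups is characteristic-$p$ linear algebra, not tree combinatorics. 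The paper supplies this missing ingredient by identifying $\partial_\infty(X)$ with the isotropic lines of $h$ in $\ell^3$ (Lemma~\ref{lemma corr with an lines}), producing a nonzero $P$-fixed vector in any $P$-stable subspace by counting orbits modulo $p$ (Lemma~\ref{lemma fix points in V}), and iterating over orthogonal complements: either an isotropic fixed vector appears, or $P$ is diagonalizable, and a diagonal matrix of $p$-power order in characteristic $p$ is the identity (Lemma~\ref{lemma fixed point}). Some argument of this kind is unavoidable for existence.

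Your uniqueness argument, by contrast, is correct and takes a genuinely different (and arguably cleaner) route than the paper's. You reduce to $\xi_1=\xi_\infty$, note that a finite $p$-subgroup of $\mathcal{B}(k)$ must lie in $\mathcal{U}_a(k)$ because $\mathcal{T}(k)\cong\ell^{*}$ has no elements of order $p$ in characteristic $p$, and then observe that $\mathcal{U}_a(k)\cap g_{u,v}^{-1}\mathcal{B}(k)g_{u,v}$ is trivial, since $g_{u,v}^{-1}\mathcal{B}(k)g_{u,v}=\mathrm{u}_a(u,v)^{-1}\,\mathrm{s}\mathcal{B}(k)\mathrm{s}\,\mathrm{u}_a(u,v)$ and an element of $\mathcal{U}_a(k)$ conjugated by $g_{u,v}$ lands in the lower unitriangular group, which meets $\mathcal{B}(k)$ only in the identity. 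This never uses $P\subseteq\mathcal{G}(A)$, whereas the paper's uniqueness proof conjugates $P$ into $\mathcal{T}(k)\cap h\mathcal{G}(A)h^{-1}$ and crucially exploits integrality at $Q$ (via $\nu_Q(B\smallsetminus\{0\})\subseteq\mathbb{Z}_{\leq 0}$) to trap the torus parameter in a finite field $\mathbb{L}$, whose multiplicative group has order prime to $p$. Your version therefore proves uniqueness for any finite $p$-subgroup of $\mathcal{G}(k)$, which is a mild strengthening; but the proposal as a whole does not stand without repairing the existence half.
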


We divide the proof of Proposition~\ref{Lemma action of a p-group} in two parts. The fist one is devoted to the existence of a fixed point and the later to its uniqueness.

\begin{lemma}\label{lemma corr with an lines}
There exists a one-to-one correspondence between $\partial_{\infty}(X)$ and the set of lines in $\ell^3$ that are isotropic with respect to the hermitian form defined by Equality \eqref{eq h}.
\end{lemma}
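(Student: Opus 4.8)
The plan is to exhibit the correspondence geometrically, using the standard dictionary between the visual boundary of the Bruhat--Tits tree of a rank-one group and the rational points of its flag variety, which in this case is the variety of isotropic lines of the hermitian form $h$. Concretely, by Corollary~\ref{coro visual limit and H(L,K)} we already have a bijection $\partial_{\infty}(X) \cong \mathcal{G}(k)/\mathcal{B}(k)$ coming from the transitive action of $\mathcal{G}(k)$ on $\partial_{\infty}(X)$ (Lemma~\ref{trans action}) together with $\mathrm{Stab}_{\mathcal{G}(k)}(\xi_{\infty}) = \mathcal{B}(k)$ (Lemma~\ref{stab visual limit}). So it suffices to identify $\mathcal{G}(k)/\mathcal{B}(k)$ with the set of $h$-isotropic lines in $\ell^3$; the composite bijection is then the desired correspondence.

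First I would observe that $\mathcal{G}(k) = \mathrm{SU}(h)(k)$ acts on the set $\mathcal{L}$ of $h$-isotropic $\ell$-lines in $\ell^3$, since $\mathrm{SU}(h)$ preserves $h$ and hence sends isotropic lines to isotropic lines. The line $L_{\infty} = \ell \cdot e_{-1}$ spanned by the first basis vector is isotropic because $h_R(x,0,0) = x\overline{0} + 0 + 0 = 0$ from \eqref{eq h}; its stabilizer in $\mathcal{G}(k)$ is precisely the set of matrices in $\mathrm{SU}(h)(k)$ fixing this line, which by the explicit description in \S\ref{subsection radical datum} is exactly the upper-triangular subgroup $\mathcal{B}(k)$. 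Next I would show the action on $\mathcal{L}$ is transitive: Witt's theorem for isotropic hermitian forms (or directly, the Bruhat decomposition $\mathcal{G}(k) = \mathcal{B}(k) \cup \mathcal{U}_a(k)\mathrm{s}\mathcal{B}(k)$ used in the proof of Corollary~\ref{coro visual limit and H(L,K)}) shows every isotropic line is $\mathcal{G}(k)$-conjugate to $L_{\infty}$; indeed applying $g_{u,v}^{-1} = \mathrm{s}\,\mathrm{u}_{-a}(u,v)$ type elements to $e_{-1}$ sweeps out all isotropic lines, matching the way $\xi_{(u,v)} = g_{u,v}^{-1}\cdot\xi_{\infty}$ sweeps out $\partial_{\infty}(X)$. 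Transitivity plus the stabilizer computation give $\mathcal{L} \cong \mathcal{G}(k)/\mathcal{B}(k) \cong \partial_{\infty}(X)$, $\mathcal{G}(k)$-equivariantly, which is the claim.

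The main obstacle, such as it is, is the bookkeeping on the two sides: verifying carefully that $\mathrm{Stab}_{\mathcal{G}(k)}(L_{\infty}) = \mathcal{B}(k)$ and not something strictly larger, and checking transitivity on $\mathcal{L}$ rather than merely on a single orbit. For the stabilizer one computes that a matrix in $\mathrm{SU}(h)$ fixing $\ell e_{-1}$ must be upper triangular with respect to the flag $\ell e_{-1} \subset (\ell e_{-1})^{\perp} = \ell e_{-1} \oplus \ell e_0 \subset \ell^3$ — this $\perp$ being the key point, forced by the hermitian pairing — so it lies in $\mathcal{B}(k)$; conversely $\mathcal{B}(k)$ visibly fixes $\ell e_{-1}$. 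For transitivity one uses that $h$ is isotropic and the classical fact that the isometry group of an isotropic hermitian form acts transitively on isotropic lines (Witt), or else reads it off the Bruhat decomposition already invoked in Corollary~\ref{coro visual limit and H(L,K)}. Everything else is the formal transport of the bijection $f$ of Corollary~\ref{coro visual limit and H(L,K)} through the identification $\partial_{\infty}(X) \cong \mathcal{G}(k)/\mathcal{B}(k) \cong \mathcal{L}$.
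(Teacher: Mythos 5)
Your proposal is correct and follows essentially the same route as the paper: both identify the set of isotropic lines with $\mathcal{G}(k)/\mathcal{B}(k)$ via transitivity (Witt's theorem, with the Bruhat decomposition as an equivalent check) and the computation that the stabilizer of the standard isotropic line is $\mathcal{B}(k)$, then conclude by Corollary~\ref{coro visual limit and H(L,K)}. The only detail the paper makes slightly more explicit is how to pass from transitivity of the full unitary group (which Witt gives) to transitivity of $\mathrm{SU}(h)(k)$, using the isometry of a hyperbolic plane swapping its two isotropic lines to fix the determinant; your Bruhat-decomposition alternative sidesteps this cleanly.
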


\begin{proof}
Recall that every isotropic line in $\ell^3$ is contained in a hyperbolic plane, which has an isometry 
switching its two isotropic lines. Then, Witt's theorem shows that $\mathcal{G}(k)$ acts transitively on the set of such lines.
Moreover, it is not hard to prove that the stabilizer of the line generated by $(1,0,0)$ is the group of upper triangular matrices in $\mathcal{G}(k)$, i.e.~$\mathcal{B}(k)$. Thus, the set of isotropic lines in $\ell^3$ is in natural correspondence with $\mathcal{G}(k)/\mathcal{B}(k)$. Then, the result follows from Corollary~\ref{coro visual limit and H(L,K)}.
\end{proof}

\begin{lemma}\label{lemma fix points in V}
Assume that $\mathbb{F}=\mathbb{F}_{p^r}$.
Let $P$ be a $p$-subgroup of $\mathcal{G}(k)$ and let $V$ be a $P$-stable $\mathbb{F}$-subspace of $\ell^3$. Then the exists a nonzero vector $w \in V$ that is fixed by $P$.
\end{lemma}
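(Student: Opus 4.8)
The plan is to reduce to the case of a finite vector space and then invoke the standard class-equation argument for actions of a $p$-group on a finite set.

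First I would note that the conclusion is only meaningful (and only needs to be proved) when $V\neq\{0\}$, so I pick a nonzero vector $v\in V$. Since $P$ is a finite $p$-group, its orbit $P\cdot v$ is a finite subset of $V$, and the $\mathbb{F}$-linear span $V_{0}$ of $P\cdot v$ is a $P$-stable $\mathbb{F}$-subspace of $V$ that contains $v$; in particular $V_{0}\neq\{0\}$ and $\dim_{\mathbb{F}}V_{0}<\infty$. Replacing $V$ by $V_{0}$, I may assume from the outset that $V$ is a nonzero \emph{finite-dimensional} $\mathbb{F}$-vector space. This reduction step — getting from an a priori infinite-dimensional $P$-stable subspace of $\ell^{3}$ (recall $\ell$ is typically infinite over $\mathbb{F}$) down to a finite-dimensional one — is really the only point requiring care; everything else is formal.

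Once $V$ is finite-dimensional, since $\mathbb{F}=\mathbb{F}_{p^{r}}$ is finite, $V$ is a finite set with $\sharp V=p^{\,r\dim_{\mathbb{F}}V}$, so $p\mid\sharp V$ because $V\neq\{0\}$. The finite $p$-group $P$ acts on the finite set $V$, every $P$-orbit has $p$-power cardinality, and splitting $V$ into orbits and reducing modulo $p$ gives $\sharp V^{P}\equiv\sharp V\equiv 0\pmod p$, where $V^{P}=\{w\in V: g\cdot w=w\ \text{for all}\ g\in P\}$. Since $0\in V^{P}$ we have $\sharp V^{P}\geq 1$, and together with $p\mid\sharp V^{P}$ this forces $\sharp V^{P}\geq p\geq 2$. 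Hence $V^{P}$ contains a nonzero vector $w$, which is the required $P$-fixed vector. (Alternatively, one could phrase the last step as: the augmentation ideal of $\mathbb{F}[P]$ is nilpotent for a finite $p$-group $P$, so a nonzero $\mathbb{F}[P]$-module has nonzero invariants; but the counting argument is more elementary and self-contained.)
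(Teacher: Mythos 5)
Your proof is correct and follows essentially the same route as the paper: pass to the finite-dimensional (hence finite) $\mathbb{F}$-span of a $P$-orbit of a nonzero vector, then use the orbit-counting argument modulo $p$ together with the fixed point $0$ to produce a nonzero $P$-fixed vector. The only shared (and harmless) implicit assumptions are that $V\neq\{0\}$ and that $P$ is finite, both of which the paper's own proof also uses.
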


\begin{proof} Fix $v \in V \smallsetminus \lbrace 0 \rbrace $ and define $W$ as the $\mathbb{F}$-vector space generated by all elements in the $P$-orbit of $v$. Since $P$ is finite, the $P$-orbit of $v$ also finite.
Thus, $W$ has finite dimension.
In particular, since $\mathbb{F}=\mathbb{F}_{p^r}$, the cardinality of $W$ is $p^n$, for some $n \in \mathbb{Z}_{>0}.$
Moreover, since $\mathcal{G}(k) \subseteq \mathrm{SL}_3(\ell)$, the group $P$ acts on $V \subseteq \ell^3$ by linear transformations. Then, $P \cdot W \subseteq W$, which implies that $P$ acts on $W$. Since $0 \in W$ is a $P$-fixed point, it follows from an easy orbit count that the cardinality of $\lbrace x \in W : P \cdot x=x \rbrace$ is divisible by $p$. In particular, we conclude that there exists a nonzero $P$-fixed point in $W$.
\end{proof}

\begin{lemma}\label{lemma fixed point}
Assume that $\mathbb{F}=\mathbb{F}_{p^r}$.
Let $P$ be a non-trivial finite $p$-subgroup of $\mathcal{G}(k)$. Then $P$ fixes a point of $\partial_{\infty}(X)$.
\end{lemma}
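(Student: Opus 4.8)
The plan is to derive Lemma~\ref{lemma fixed point} from Lemma~\ref{lemma fix points in V} and Lemma~\ref{lemma corr with an lines}. By Lemma~\ref{lemma corr with an lines}, fixing a point of $\partial_{\infty}(X)$ is equivalent to fixing an isotropic line in $\ell^3$ with respect to the hermitian form $h$. So the goal becomes: show that a non-trivial finite $p$-subgroup $P$ of $\mathcal{G}(k)$ fixes an isotropic line. The natural strategy is to produce a $P$-stable isotropic line by first producing some $P$-fixed vector (via Lemma~\ref{lemma fix points in V}, applied to $V=\ell^3$, since $\mathbb{F}=\mathbb{F}_{p^r}$ and $\ell^3$ is trivially $P$-stable), and then argue that a fixed vector — or one built from it — can be chosen isotropic.

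First I would apply Lemma~\ref{lemma fix points in V} with $V=\ell^3$ to obtain a nonzero $w_0 \in \ell^3$ fixed by $P$. If $h(w_0)=0$ we are done: the line $\ell w_0$ is $P$-fixed and isotropic. The difficulty is the case $h(w_0)\neq 0$, i.e. $w_0$ is anisotropic. In that case I would pass to the orthogonal complement $V_1 := w_0^{\perp}$ with respect to $h$. Since $h$ is non-degenerate and $w_0$ is anisotropic, $V_1$ is a non-degenerate hermitian subspace of dimension $2$ over $\ell$ (as an $\ell$-space, with the semilinear structure of the hermitian form), and crucially $V_1$ is $P$-stable because $P$ acts by isometries of $h$ and fixes $w_0$. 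Now apply Lemma~\ref{lemma fix points in V} again, this time to $V_1$, to get a nonzero $P$-fixed vector $w_1 \in V_1$. If $h(w_1)=0$ we are done. Otherwise $w_1$ is again anisotropic, and we iterate: pass to $V_2 := V_1 \cap w_1^{\perp}$, a $P$-stable subspace. The point is that this descent cannot continue producing anisotropic vectors forever while staying nonzero — a $3$-dimensional non-degenerate hermitian space over $\ell$ is isotropic (since the form $h$ of Equation~\eqref{eq h} is isotropic by hypothesis, and in fact any hermitian form in $\geq 2$ variables over a function field, or at least the relevant one here, represents zero), so once the orthogonal complement has dimension $\geq 1$ and is non-degenerate, one cannot have it spanned entirely by a mutually orthogonal anisotropic family without contradicting isotropy; concretely after at most two steps the remaining space $V_2$ has $\ell$-dimension $1$ and must be isotropic, because $\ell^3 = \ell w_0 \perp \ell w_1 \perp V_2$ is an orthogonal decomposition and $h$ is isotropic, which forces $V_2$ to be a non-degenerate isotropic line — i.e. it is spanned by a $P$-fixed isotropic vector.

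Let me make the endgame precise, since that is the main obstacle: I must rule out the scenario where every $P$-fixed vector I extract is anisotropic and the complements shrink to $0$. The key observation is that at each stage $V_i$ is non-degenerate (orthogonal complement of an anisotropic vector in a non-degenerate space), so Lemma~\ref{lemma fix points in V} keeps applying and the dimensions strictly decrease: $\dim_\ell V_0 = 3 > \dim_\ell V_1 = 2 > \dim_\ell V_2 = 1$. If $w_2 \in V_2$ is the $P$-fixed vector and $h(w_2)\neq 0$, then $\ell^3$ would be an orthogonal sum of three anisotropic lines, hence an anisotropic hermitian form — contradicting that $h$ in Equation~\eqref{eq h} is isotropic. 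Therefore $h(w_2)=0$, $\ell w_2$ is the desired $P$-fixed isotropic line, and Lemma~\ref{lemma corr with an lines} converts this into a $P$-fixed point of $\partial_{\infty}(X)$. I would also remark that $w_2\neq 0$ throughout, guaranteed by Lemma~\ref{lemma fix points in V}, so the argument never degenerates. The only subtlety to be careful about is whether an orthogonal complement of an anisotropic vector is automatically non-degenerate — this is standard for non-degenerate hermitian (or bilinear) forms and I would just cite it rather than reprove it.
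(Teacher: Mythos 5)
Your first two reductions match the paper exactly: translate the problem into fixing an isotropic line via Lemma~\ref{lemma corr with an lines}, get a $P$-fixed vector $w_0$ from Lemma~\ref{lemma fix points in V} applied to $\ell^3$, and if it is anisotropic descend to the $P$-stable non-degenerate complement $w_0^{\perp}$ and repeat. The problem is your endgame. The claim that an orthogonal sum of three anisotropic lines is an anisotropic hermitian form is false: in characteristic $\neq 2$ every non-degenerate hermitian space, including the isotropic form $h$ of Equation~\eqref{eq h}, admits an orthogonal basis, and every vector of an orthogonal basis of a non-degenerate space is automatically anisotropic (already the hyperbolic plane is the orthogonal sum of two anisotropic lines). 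For the same reason the phrase ``non-degenerate isotropic line'' is self-contradictory: a line $\ell w_2$ is non-degenerate for $h$ precisely when $h(w_2,w_2)\neq 0$, i.e.\ precisely when it is anisotropic. So in the worst case your $V_2$ is forced to be anisotropic, the contradiction you are trying to manufacture never materializes, and the argument does not close.

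What is actually true, and what the paper exploits, is that in the worst case you have produced three mutually orthogonal $P$-stable lines, so $P$ is simultaneously diagonalizable: $\rho(g)=\mathrm{diag}(\lambda_1,\lambda_2,\lambda_3)$ for each $g\in P$. Since $g^{p^t}=e$ and $\mathrm{char}(\ell)=p$, the equation $\lambda_i^{p^t}=1$ forces $\lambda_i=1$, so $\rho$ is trivial and $P$ fixes \emph{every} line of $\ell^3$ — in particular any isotropic line, which exists because $h$ is isotropic. In other words, the case you are trying to rule out by a (false) anisotropy argument should instead be resolved by the observation that a diagonalizable $p$-torsion matrix in characteristic $p$ is the identity. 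With that replacement the rest of your write-up (including the non-degeneracy of $w_0^{\perp}$, which is indeed standard) goes through.
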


\begin{proof}
According to Lemma~\ref{lemma corr with an lines}, we have to show that there exists a nonzero vector in $\ell^3$ that is fixed by $P$ and isotropic with respect the hermitian form defined in Eq. \eqref{eq h}.

Lemma~\ref{lemma fix points in V} applied to $V=\ell^3$ implies that there exists $w \in \ell^{3} \smallsetminus \lbrace 0 \rbrace$ which is fixed by $P$. If $w$ is isotropic, then the result follows. Assume that $w$ is anisotropic. Then, $\ell^3$ decomposes as the direct sum of two $\ell[P]$-stable modules $V_0=\langle w \rangle$ and $V_1=\langle w\rangle^{\perp}$. 
By applying Lemma~\ref{lemma fix points in V} on $V=V_1$, we show that there exists a nonzero vector $w_1 \in V_1$ that is fixed by $P$.
If $w_1$ is isotropic, then the proof is concluded. 
If $w_1$ is anisotropic, then $V_1$ decomposes in the sum of two
$\ell[P]$-stable lines.
In this case, $P$ acts on $\ell^3$ via a diagonal representation $\rho: P \to \mathrm{GL}_3(\ell)$. Set $g \in P \smallsetminus \lbrace e \rbrace$ and, as in \S\ref{subsection radical datum}, write $\rho(g)= \mathrm{diag}(\lambda_1, \lambda_2, \lambda_3)$, for some $\lambda_1=\lambda_1(g), \lambda_2=\lambda_2(g), \lambda_3=\lambda_3(g) \in \ell^{*}$. Since $g^{p^t}=e$, for some $t \in \mathbb{Z}_{\geq 0}$, we have $\mathrm{diag}(\lambda_1^{p^t}, \lambda_2^{p^t}, \lambda_3^{p^t})=\rho(g)^{p^t}=\mathrm{id}$. Thus, we get $\lambda_1=\lambda_2=\lambda_3=1$, since $\mathrm{char}(\ell)=p$. Hence, we deduce that $\rho$ is trivial. In other words, we conclude that any vector in $\ell$, and in particular any isotropic line, is fixed by $P$.
\end{proof}

\begin{proof}[Proof of Proposition~\ref{Lemma action of a p-group}]
Let $P \subset \Gamma=\mathcal{G}(A)$ be a non-trivial finite $p$-group.
It follows from Lemma~\ref{lemma fixed point} that $P$ has a fixed point in $\partial_{\infty}(X)$. Thus, is suffices to prove uniqueness.
To do so, assume that $P$ fixes two points $\xi_1, \xi_2 \in \partial_{\infty}(X)$. In other words, assume that\begin{equation}\label{eq P subset stab}
P \subseteq \mathrm{Stab}_{\mathcal{G}(k)}(\xi_1) \cap \mathrm{Stab}_{\mathcal{G}(k)}(\xi_2).
\end{equation}
Since $\mathcal{G}(k)$ acts transitively on $\partial_{\infty}(X)$ (c.f. Lemma~\ref{trans action}), we can write $\xi_1= g_1 \cdot \xi_{\infty}$ and $\xi_2= g_2 \cdot \xi_{\infty}$, for some $g_1, g_2 \in \mathcal{G}(k)$. By the Bruhat decomposition we can also write $g_1^{-1}g_2= b \mathrm{s}^{\epsilon} b'$, where $b,b' \in \mathcal{B}(k)$, and $\epsilon \in \lbrace 0,1 \rbrace$.

Firstly, assume that $\epsilon=0$. Then, it follows from Lemma~\ref{stab visual limit} that  $g_1^{-1}g_2$ belongs to $\mathrm{Stab}_{\mathcal{G}(k)}(\xi_{\infty})$. Hence, we conclude $\xi_1= g_1 \cdot \xi_{\infty}= g_1 (g_1^{-1}g_2) \cdot \xi_{\infty}= g_2 \cdot \xi_{\infty}= \xi_2$. Thus, the result follows in this case.

Now, assume that $\epsilon=1$. It follows from Equation \eqref{eq P subset stab} and Lemma~\ref{stab visual limit} that $P \subseteq g_1\mathcal{B}(k) g_1^{-1} \cap g_2  \mathcal{B}(k) g_2^{-1}$. Since $g_1^{-1}g_2= b \mathrm{s} b'$, with $b,b' \in \mathcal{B}(k)$, we have
$$(g_1b)^{-1} P (g_1b) \subseteq \mathcal{B}(k) \cap \mathrm{s}  \mathcal{B}(k) \mathrm{s}^{-1}.$$
Set $h=bg_1$. Since $\mathcal{B}(k) \cap \mathrm{s}  \mathcal{B}(k) \mathrm{s}^{-1}=\mathcal{T}(k)$ and $P \subset \mathcal{G}(A)$, we deduce that $h^{-1} P h $ is contained in
\begin{equation}\label{eq P conj subset stab}
\mathcal{T}(k) \cap h \mathcal{G}(A) h^{-1}= \left\lbrace \mathrm{diag}(t, \overline{t}t^{-1}, \overline{t}^{-1}): t \in \mathrm{R}_{\ell/k}(\mathbb{G}_{m,\ell}) \right\rbrace \cap h \mathcal{G}(A) h^{-1}.
\end{equation}
Recall that $\mathcal{G}$ admits an embedding $\mathcal{G} \to \mathrm{R}_{\mathcal{D}/\mathcal{C}}(\mathrm{SL}_{3, \mathcal{D}})$. Through this representation, we identify elements in $\mathcal{G}(A)$ with the $3 \times 3$ matrices with coefficients in $B$ (c.f.~\S\ref{subsection radical datum}). Then, the eigenvalues $t, \overline{t}t^{-1}$ and $\overline{t}^{-1}$ of any matrix $\mathrm{diag}(t, \overline{t}t^{-1}, \overline{t}^{-1}) \in h \mathcal{G}(A) h^{-1}$ belong to $B$. Since $\nu_Q(B \smallsetminus \lbrace 0 \rbrace ) \subseteq \mathbb{Z}_{\leq 0}$, we deduce that
$$ 0 \geq \nu_Q(t)= \nu_Q(\overline{t})=-\nu(\overline{t}^{-1}) \geq 0.$$
This implies that $t$ belongs to $ \lbrace x \in B: \nu_Q(x) = 0 \rbrace$, which is an extension $\mathbb{L}$ of $\mathbb{F}$ whose degree is at most two. Thus, we conclude $P \subset h \left\lbrace \mathrm{diag}(t,1,t^{-1}): t \in \mathbb{L}^{*} \right\rbrace h^{-1}$.
Since $\mathbb{L}$ has cardinality $p^{s}$, for some $s \in \mathbb{Z}$, the group $\left\lbrace \mathrm{diag}(t,1,t^{-1}): t \in \mathbb{L}^{*} \right\rbrace \cong \mathbb{L}^{*}$ has a cardinal coprime with $p$. Thus, we obtain a contradiction.
\end{proof}

\section{On the Steinberg module}\label{section Euler-Poincare}
In all this section we assume that $\mathbb{F}=\mathbb{F}_{p^r}$.
We identify the spherical building $\partial_{\infty}(X)$ with $H(\ell, k) \cup \lbrace \infty \rbrace$, according to Corollary~\ref{coro visual limit and H(L,K)}.
Let $\mathbb{Z}[H(\ell, k) \cup \lbrace \infty \rbrace]$ be the free abelian group with basis $H(\ell, k) \cup \lbrace \infty \rbrace$, and let $\epsilon: \mathbb{Z}[H(\ell, k) \cup \lbrace \infty \rbrace] \to \mathbb{Z}$ be the augmentation homomorphism defined from $\epsilon(z)=1$, for all $z \in H(\ell, k) \cup \lbrace \infty \rbrace$.

\begin{definition}\label{def without p' torsion}
We define a group without $p'$-torsion as a group where the order of every element of finite order is a power of $p$.
\end{definition}

Let $G$ be a finite index subgroup of $\Gamma$ without $p'$-torsion.
We define the Steinberg module $\mathrm{St}=\mathrm{St}(G,X)$ induced form the action of $G$ on $X$ as the kernel of $\epsilon$.
Note that $\mathrm{St}$ is a $\mathbb{Z}[G]$-module, since $G \cdot \mathrm{St}=\mathrm{St}$.
Thus, we naturally obtain the following exact sequence of $\mathbb{Z}[G]$-modules:
\begin{equation}\label{Eq ex sequence St}
0 \to \mathrm{St} \to \mathbb{Z}[H(\ell, k) \cup \lbrace \infty \rbrace] \to \mathbb{Z} \to 0.
\end{equation}
In order to prove Theorem~\ref{Main teo steinberg}, we introduce the following definitions.

\begin{definition}\label{defi of unstable}
We say that a vertex (resp. an edge) of $X$ is $G$-stable if its stabilizer is trivial. In any other case, we say that it is $G$-unstable.
We define $X_{\infty}$ as the subgraph of $X$ consisting of its $G$-unstable vertices and edges. This definition makes sense, since, if $e \in \mathrm{E}(X)$ is $G$-unstable, then any vertex $v$ such that $ o(e)=v$ or $t(e)=v$ is also $G$-unstable. Note that $X_{\infty} \neq \emptyset$, according to Theorem~\ref{Teo ABLL stab}(b).
\end{definition}

Since $G$ preserve the type of vertices in $X$, there exists an orientation $\mathrm{E}^{+}(X)$ of $X$ that is invariant under $G$.

\begin{definition}\label{def L_i}
We write
$$S_0= \mathrm{V}(X) \smallsetminus \mathrm{V}(X_{\infty}), \text{ and } S_1= \mathrm{E}^{+}(X) \smallsetminus \mathrm{E}^{+}(X_{\infty}).$$
The group $G$ acts freely on each $S_i$.
For each index $i \in \lbrace 0,1 \rbrace$, we write $L_i$ for the $\mathbb{Z}[G]$-module $L_i= \mathbb{Z}[S_i],$
whose rank over $\mathbb{Z}[G]$ is $l_i= \sharp (G \backslash S_i)$.
\end{definition}

\begin{lemma}
$l_0$ and $l_1$ are finite.
\end{lemma}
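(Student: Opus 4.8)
The plan is to show that $l_0 = \sharp(G\backslash S_0)$ and $l_1 = \sharp(G\backslash S_1)$ are finite by reducing everything to the finiteness statement in Theorem~\ref{Teo ABLL quotient} (and its packaged form Corollary~\ref{coro finitud}). Recall that $S_0$ consists of the $G$-stable vertices of $X$, i.e. those with trivial $G$-stabilizer, and $S_1$ of the $G$-stable edges in a fixed $G$-invariant orientation. Since $G$ acts freely on each $S_i$, the cardinality $\sharp(G\backslash S_i)$ is exactly the $\mathbb{Z}[G]$-rank $l_i$ of $L_i = \mathbb{Z}[S_i]$, so it suffices to bound the number of $G$-orbits of cells carrying trivial stabilizer.

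First I would pass from $G$ to $\Gamma = \mathcal{G}(A)$. Since $G$ has finite index in $\Gamma$, say $[\Gamma:G] = m$, every $\Gamma$-orbit of vertices (resp. edges) splits into at most $m$ many $G$-orbits; hence it is enough to prove that only finitely many $\Gamma$-orbits of vertices (resp. edges) in $X$ contain a cell with trivial $G$-stabilizer. Now if a vertex $v$ has $\mathrm{Stab}_G(v) = \{1\}$, then $\sharp\,\mathrm{Stab}_\Gamma(v) \le [\mathrm{Stab}_\Gamma(v):\mathrm{Stab}_G(v)] \le [\Gamma:G] = m$, because $\mathrm{Stab}_G(v) = \mathrm{Stab}_\Gamma(v)\cap G$ so the index of $\mathrm{Stab}_G(v)$ in $\mathrm{Stab}_\Gamma(v)$ is at most $[\Gamma:G]$. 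Thus every such vertex lies in a $\Gamma$-orbit on which the stabilizer has order at most $N := m$, and likewise for edges. Applying Corollary~\ref{coro finitud} with this $N$, there are only finitely many $\Gamma$-orbits of vertices (resp. edges) all of whose members have stabilizer of order $\le N$; in particular there are finitely many $\Gamma$-orbits meeting $S_0$ (resp. $S_1$). Multiplying by $[\Gamma:G]$ to account for the splitting of each $\Gamma$-orbit into $G$-orbits, we conclude $l_0, l_1 < \infty$. Here one uses that $\mathbb{F} = \mathbb{F}_{p^r}$ is finite, which is the running hypothesis of \S\ref{section Euler-Poincare} and the hypothesis of Corollary~\ref{coro finitud}.

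The only subtlety — and the step I would phrase carefully — is the passage ``$\mathrm{Stab}_G(v)$ trivial $\Rightarrow \sharp\,\mathrm{Stab}_\Gamma(v) \le [\Gamma:G]$'': this is just the observation that $\mathrm{Stab}_\Gamma(v)/\mathrm{Stab}_G(v)$ injects into $\Gamma/G$ as a set of cosets (not as groups, since $\mathrm{Stab}_G(v)$ need not be normal, but the coset-counting bound still holds), so $\sharp\,\mathrm{Stab}_\Gamma(v) = \sharp\,\mathrm{Stab}_G(v)\cdot[\mathrm{Stab}_\Gamma(v):\mathrm{Stab}_G(v)] \le 1\cdot m$. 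No real obstacle arises; the argument is a routine application of Corollary~\ref{coro finitud} combined with finite index. An alternative, essentially equivalent route avoiding Corollary~\ref{coro finitud} would be to invoke Theorem~\ref{Teo ABLL quotient} directly: $\Gamma\backslash X = Y \cup \bigsqcup_\sigma \mathfrak{r}(\sigma)$ with $Y$ finite, and along each cusp ray $\mathfrak{r}(\sigma)$ the stabilizers grow strictly by Theorem~\ref{Teo ABLL stab}(b), so only finitely many cells on each of the finitely many rays can have small stabilizer; but routing through Corollary~\ref{coro finitud} is cleaner since it packages exactly this.
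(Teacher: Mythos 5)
Your proof is correct and follows essentially the same route as the paper: both arguments rest on the observation that a trivial $G$-stabilizer forces the $\Gamma$-stabilizer to have order at most $[\Gamma:G]$, then invoke Corollary~\ref{coro finitud} to get finitely many $\Gamma$-orbits of such cells, and finally use finiteness of $[\Gamma:G]$ to control the splitting of $\Gamma$-orbits into $G$-orbits. The paper merely phrases this as a proof by contradiction (extracting an infinite sequence in distinct $G$-orbits and pigeonholing), whereas you argue directly; the content is identical.
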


\begin{proof}
Assume that $l_0=\infty$. Then, by definition of $l_0$, there exists a sequence $\mathfrak{v}=\lbrace v_i \rbrace_{i=1}^{\infty} \subset \mathrm{V}(X)$ such that $G \cdot v_i \cap G \cdot v_j=\emptyset$, for all $i \neq j$, and $\mathrm{Stab}_G(v_i)=\lbrace e \rbrace$, for all $i \in \mathbb{Z}_{>0}$. Since the stabilizer in $G$ of each vertex $v_i$ is trivial, its stabilizer in $\Gamma$ has at most $N:=[\Gamma:G]$ elements.
The set of such vertices is finite, modulo the action of $\Gamma$, as Corollary~\ref{coro finitud} shows.
Then, there exists a subsequence $\lbrace v_{i_s} \rbrace_{s=1}^{\infty}$ of $\mathfrak{v}$ whose elements belong in the same $\Gamma$-orbit. In particular, for any $s \in \mathbb{Z}_{>0}$ there exists $\gamma_{s} \in \Gamma$ such that $v_{i_s}= \gamma_s \cdot v_{i_1}$. Since $[\Gamma:G]=N < \infty$, there exists a pair $(s_1,s_2)$, with $s_1 \neq s_2$, such that $\gamma_{s_1} \gamma_{s_2}^{-1} \in G$. Thus $G \cdot v_{i_{s_1}} \cap G \cdot v_{i_{s_2}}\neq \emptyset$, which is an absurd. Hence, we conclude that $l_0<\infty$. An analogous argument shows that $l_1$ is also finite.
\end{proof}

Now, we interpret the $\mathbb{Z}[G]$-modules $L_0$ and $L_1$ in terms of homological chains. 
Indeed, for each $i \in \lbrace 0,1\rbrace$ we have that $L_i$ is the group of $i$-dimensional chains in $X$ modulo $X_{\infty}$, i.e. it is the reduced chain complex of $X / X_{\infty}$.
The exact sequence for relative homology gives
\begin{multline*}\label{eq top rel homology}
 \lbrace 0 \rbrace \to  H_{1}(X_{\infty}) \to
 H_{1}(X) \to  H_{1}(X \text{ mod } X_{\infty} )
 \\
 \to H_0(X_{\infty}) 
 \to H_{0}(X) \to  H_{0}(X \text{ mod } X_{\infty} ) \to \lbrace 0 \rbrace.
\end{multline*}
Since $X$ is connected and contractile, we obtain $H_0(X)= \mathbb{Z}$ and $H_1(X)= \lbrace 0 \rbrace$. This implies that 
$  \lbrace 0 \rbrace \to  H_{1}(X \text{ mod } X_{\infty} )
 \to H_0(X_{\infty}) 
 \to \mathbb{Z} \to  H_{0}(X \text{ mod } X_{\infty} ) \to \lbrace 0 \rbrace$ is exact.
 
 \begin{lemma}\label{H_0 relative}
$H_0(X \text{ mod } X_{\infty})=\lbrace 0 \rbrace$ 
 \end{lemma}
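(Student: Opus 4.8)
The plan is to show that the map $H_0(X_\infty) \to H_0(X)$ appearing in the exact sequence is surjective; since that sequence ends with $H_0(X_\infty) \to H_0(X) \to H_0(X \text{ mod } X_\infty) \to \lbrace 0 \rbrace$, surjectivity of the first arrow forces $H_0(X \text{ mod } X_\infty)$ to vanish. Because $H_0(X) = \mathbb{Z}$ is generated by the class of a single vertex, and because $X$ is connected so that all vertices are homologous in $H_0(X)$, it suffices to exhibit at least one vertex of $X$ lying in $X_\infty$, i.e. at least one $G$-unstable vertex. Then the class of that vertex, which maps onto a generator of $H_0(X)=\mathbb{Z}$, shows the map $H_0(X_\infty)\to H_0(X)$ is onto.

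First I would invoke Theorem~\ref{Teo ABLL stab}(b): for any $\xi \in \partial_\infty(X)$ the ray $\mathfrak{r}_\xi$ has $\mathrm{Stab}_\Gamma(v_i(\xi))$ strictly increasing in $i$, hence for $i$ large the stabilizer in $\Gamma$ is non-trivial. Since $G$ has finite index $N=[\Gamma:G]$ in $\Gamma$, and since along the ray the orders $\sharp\mathrm{Stab}_\Gamma(v_i(\xi))$ are unbounded (a strictly increasing sequence of finite groups — or one can use Theorem~\ref{Teo ABLL stab}(c) together with the fact that $\mathrm{Stab}_\Gamma(\xi)$ is infinite, as it contains the unipotent group $U_\xi$ which by Theorem~\ref{Teo ABLL stab}(g) is infinite), there is some vertex $v$ along the ray with $\sharp\mathrm{Stab}_\Gamma(v) > N$; for such $v$ the intersection $\mathrm{Stab}_G(v)=\mathrm{Stab}_\Gamma(v)\cap G$ cannot be trivial, since an index-$N$ subgroup of a group of order $>N$ is non-trivial. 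Hence $v \in \mathrm{V}(X_\infty)$ and $X_\infty \neq \emptyset$; this is exactly the observation already recorded in Definition~\ref{defi of unstable}.

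Now the inclusion $X_\infty \hookrightarrow X$ sends the class $[v] \in H_0(X_\infty)$ to $[v] \in H_0(X)$, and $[v]$ generates $H_0(X)=\mathbb{Z}$ because $X$ is connected (any vertex generates reduced-or-unreduced $H_0$ of a connected graph; here $H_0(X)=\mathbb{Z}$ is the unreduced group and any vertex is a generator). Therefore $H_0(X_\infty)\to H_0(X)$ is surjective, and exactness of
\begin{equation*}
H_0(X_\infty) \to H_0(X) \to H_0(X \text{ mod } X_\infty) \to \lbrace 0 \rbrace
\end{equation*}
yields $H_0(X \text{ mod } X_\infty) = \lbrace 0 \rbrace$, as claimed.

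The only genuine point requiring care — and the step I expect to be the crux — is guaranteeing that $X_\infty$ is non-empty, i.e. producing a $G$-unstable vertex; this is where the finite-index hypothesis on $G$ and the growth of stabilizers along cusp rays (Theorem~\ref{Teo ABLL stab}(b),(g)) are essential, and it is precisely the content flagged in Definition~\ref{defi of unstable}. Once non-emptiness is in hand, the rest is a formal consequence of the long exact sequence and the connectedness of $X$.
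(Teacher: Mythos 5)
Your proof is correct and follows exactly the paper's route: the paper's own proof is the one-line observation that $H_0(X_{\infty}) \to \mathbb{Z}=H_0(X)$ is surjective (with the non-emptiness of $X_{\infty}$ already recorded in Definition~\ref{defi of unstable} via Theorem~\ref{Teo ABLL stab}(b)), and the result then follows from exactness. You have simply spelled out the details of why $X_{\infty}\neq\emptyset$ and why a vertex class generates $H_0(X)$, which is exactly the content the paper leaves implicit.
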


 \begin{proof}
Since $H_0(X_{\infty}) \to \mathbb{Z}$ is surjective, the result follows.
 \end{proof}

In particular $\lbrace 0 \rbrace \to H_1(X \text{ mod } X_{\infty}) \to H_0(X_{\infty}) \to \mathbb{Z} \to \lbrace 0 \rbrace$
is exact.
Therefore, the group $H_1(X \text{ mod }X_{\infty})$ may be identified with the kernel of the augmentation homomorphism $H_0(X_{\infty}) \to \mathbb{Z}$. The following lemma proves that this kernel is, in fact, the Steinberg module defined in \eqref{Eq ex sequence St}.

\begin{lemma}\label{lemma H_1}
The $0$-homology group $H_0(X_{\infty})$ is isomorphic to $\mathbb{Z}[H(\ell,k) \cup \lbrace 0 \rbrace]$. In particular, we have $H_1(X \text{ mod }X_{\infty}) \cong \mathrm{St}$.
\end{lemma}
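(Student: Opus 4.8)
The plan is to identify $H_0(X_\infty)$ with the free abelian group on the set of connected components of $X_\infty$ (the usual description of the zeroth homology of a graph) and then to produce a canonical bijection $\Phi$ between these components and $\partial_\infty(X)=H(\ell,k)\cup\{\infty\}$. The bijection is read off from Proposition~\ref{Lemma action of a p-group}: since $G$ has no $p'$-torsion and the stabilizer in $\Gamma$ of any vertex or edge of $X$ is finite, for every $G$-unstable vertex $v$ the group $\mathrm{Stab}_G(v)$ is a finite group all of whose elements have $p$-power order, hence a nontrivial finite $p$-subgroup of $\Gamma$, so it fixes a unique point $\phi(v)\in\partial_\infty(X)$. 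I would then set $\Phi(C)=\phi(v)$ for any vertex $v$ of a component $C$ of $X_\infty$.

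First I would check that $\phi$ is constant on components. If $v$ and $w$ are the endpoints of an edge $e$ of $X_\infty$, then $\mathrm{Stab}_G(e)=\mathrm{Stab}_G(v)\cap\mathrm{Stab}_G(w)$ is again a nontrivial finite $p$-group, hence has a unique fixed point in $\partial_\infty(X)$; since $\mathrm{Stab}_G(e)\subseteq\mathrm{Stab}_G(v)$ it fixes $\phi(v)$, and since $\mathrm{Stab}_G(e)\subseteq\mathrm{Stab}_G(w)$ it fixes $\phi(w)$, so $\phi(v)=\phi(w)$. Therefore $\Phi$ is well defined.

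Next, surjectivity. Given $\xi\in\partial_\infty(X)$, take the ray $\mathfrak r_\xi$ of Theorem~\ref{Teo ABLL stab}, with vertices $v_i(\xi)$. Intersecting the identity $\mathrm{Stab}_\Gamma(\xi)=\bigcup_i\mathrm{Stab}_\Gamma(v_i(\xi))$ with $G$ gives an increasing chain $\mathrm{Stab}_G(v_0(\xi))\subseteq\mathrm{Stab}_G(v_1(\xi))\subseteq\cdots$ whose union is $\mathrm{Stab}_G(\xi)$; the latter is infinite, because it has finite index in $\mathrm{Stab}_\Gamma(\xi)$ (as $[\Gamma:G]<\infty$) and $\mathrm{Stab}_\Gamma(\xi)$ contains the infinite group $U_\xi$ (Theorem~\ref{Teo ABLL stab}(f)--(g)). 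Hence $\mathrm{Stab}_G(v_i(\xi))\neq\{e\}$ for $i$ large, so $v_i(\xi)\in X_\infty$; and as $\mathrm{Stab}_G(v_i(\xi))\subseteq\mathrm{Stab}_G(\xi)$ fixes $\xi$, uniqueness forces $\phi(v_i(\xi))=\xi$, so the component of $v_i(\xi)$ maps to $\xi$. For injectivity, suppose $\Phi(C_1)=\Phi(C_2)=\xi$ and pick $v_j\in C_j$. Then $\mathrm{Stab}_G(v_j)$ fixes both $v_j$ and the end $\xi=\phi(v_j)$, so it fixes pointwise the unique geodesic ray $\rho_j$ from $v_j$ to $\xi$; since $\mathrm{Stab}_G(v_j)\neq\{e\}$, every vertex and edge of $\rho_j$ is $G$-unstable, whence $\rho_j\subseteq X_\infty$ and thus $\rho_j\subseteq C_j$. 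As $\rho_1$ and $\rho_2$ are geodesic rays toward the same end they share a common subray, so $C_1\cap C_2\neq\emptyset$ and $C_1=C_2$. Hence $\Phi$ is a bijection and $H_0(X_\infty)\cong\mathbb{Z}[H(\ell,k)\cup\{\infty\}]$.

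Finally, the ``in particular'' follows from the discussion preceding the lemma: $H_1(X\bmod X_\infty)$ was identified with the kernel of the map $H_0(X_\infty)\to H_0(X)=\mathbb{Z}$ induced by the inclusion, and this map sends the class of each component to $1$, hence, under the isomorphism $\Phi$, becomes the augmentation $\epsilon$ of \eqref{Eq ex sequence St}; so $H_1(X\bmod X_\infty)\cong\ker\epsilon=\mathrm{St}$. I expect the delicate point to be the injectivity of $\Phi$ --- that two distinct components of $X_\infty$ cannot accumulate to the same end --- which is exactly where the uniqueness clause of Proposition~\ref{Lemma action of a p-group}, not merely the existence of a fixed point, is needed, together with the elementary tree fact that an automorphism fixing a vertex and an end fixes the whole ray between them.
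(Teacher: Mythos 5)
Your proof is correct and follows essentially the same route as the paper's: you identify $H_0(X_\infty)$ with the free abelian group on the connected components of $X_\infty$ and establish the bijection with $\partial_\infty(X)$ via the unique fixed point of the nontrivial finite $p$-group $\mathrm{Stab}_G(v)$ from Proposition~\ref{Lemma action of a p-group}, with the same three verifications (constancy on components via edge stabilizers, surjectivity via Theorem~\ref{Teo ABLL stab}(b)--(c), injectivity via the rays $\mathfrak{r}_v\subseteq X_\infty$ sharing a common subray). The only discrepancy is that the lemma's displayed statement has a typo ($\{0\}$ for $\{\infty\}$), which you silently and correctly fix.
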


Let $k_P$ be the completion of $k$ at $P$.
Let $v$ be a vertex of $X_{\infty}$. Then, by definition $\mathrm{Stab}_G(v) \neq \lbrace e \rbrace$.
The $\mathcal{G}(k)$-action on $X$ naturally extends to $\mathcal{G}(k_P)$.
Moreover, since $\mathbb{F}$ is finite, the group $\mathrm{Stab}_{\mathcal{G}(k_P)}(v)$ is a compact subgroup of $\mathcal{G}(k_P)$. Then, since $G \subseteq \Gamma$ is discrete, we get that $\mathrm{Stab}_G(v)$ is finite, whence it is a torsion group. Moreover, since $G$ has not $p'$-torsion, we conclude $\mathrm{Stab}_G(v)$ is a non-trivial finite $p$-subgroup of $\mathcal{G}(k)$. Then, it follows from Proposition~\ref{Lemma action of a p-group} that such a group fixes exactly one visual limit $\xi_v$ in $\partial_{\infty}(X)$. 

\begin{definition}\label{def of Q_x} Let $v \in \mathrm{V}(X_{\infty})$. Let us denote by $\mathfrak{r}_v$ be the unique ray in $X$ joining $v$ with the visual limit $\xi_v$ introduced above (c.f.~\S\ref{subsection graphs}). 
\end{definition}

\begin{proof}[Proof of Lemma~\ref{lemma H_1}] The result is a consequence of Corollary~\ref{coro visual limit and H(L,K)} and the following claim. We affirm that the map
\begin{equation}
\Theta: X_{\infty} \to \partial_{\infty}(X), \qquad v \mapsto \xi_v,
\end{equation}
induces a bijection $\Theta'$ between the set of connected component of $X_{\infty}$ onto $\partial_{\infty}(X)$. In order to prove this claim we proceed in three steps.

Firstly, assume that $v$ and $v'$ belongs to the same connected component of $X_{\infty}$. We have to prove that $\xi_v=\xi_{v'}$, i.e. $\Theta'$ is well-defined. Note that, by an inductive argument on the distance between $v$ and $v'$, it suffices to prove the preceding claim when $v$ and $v'$ are adjacent. In this case, let $e$ be an edge in $X_{\infty}$ connecting $v$ with $v'$. Since $e \in \mathrm{E}(X_{\infty})$, we have that $\mathrm{Stab}_G(e)$ is a non-trivial $p$-subgroup of $\Gamma$.
Therefore, it follows from Proposition~\ref{Lemma action of a p-group} that $\mathrm{Stab}_G(e)$ stabilizes one and only one point $\xi_e$ of $\partial_{\infty}(X)$. Since $\mathrm{Stab}_G(e)$ is contained in $\mathrm{Stab}_G(v)$ and $\mathrm{Stab}_G(v') $, we deduce that $\mathrm{Stab}_G(e)$ simultaneously stabilizes $\xi_v$ and $\xi_{v'}$. By the uniqueness of $\xi_e$, we conclude $\xi_v=\xi_e=\xi_{v'}$.

Secondly, assume $v,v' \in X_{\infty}$ satisfy $\xi_v=\xi_{v'}$. Then, we claim that $v$ and $v'$ belong to the same connected component of $X_{\infty}$, i.e. $\Theta'$ is injective. Indeed, if $\xi_v=\xi_{v'}$, then $\mathfrak{r}_v \cap \mathfrak{r}_{v'}$ contains a ray.
This implies that $\mathfrak{r}:=\mathfrak{r}_v \cup \mathfrak{r}_{v'}$ is connected.
Now, since $\mathrm{Stab}_G(v)$ simultaneously stabilizes $v$ and $\xi_v$, then $\mathrm{Stab}_G(v)$ stabilizes $\mathfrak{r}_v$.
In particular $\mathfrak{r}_v \subseteq X_{\infty}$. Analogously, we deduce $\mathfrak{r}_{v'} \subseteq X_{\infty}$. Thus, we conclude $\mathfrak{r}$ is a connected subgraph of $X_{\infty}$ containing $v$ and $v'$. Then, the second claim follows.

Finally, we prove that every element in $\partial_{\infty}(X)$ has the form $\xi_v$, for some $v \in \mathrm{V}( X_{\infty})$, i.e. $\Theta'$ is surjective. Let $\xi$ be a visual limit of $X$.
It follows from Theorem~\ref{Teo ABLL stab}(c) that there exists a ray $\mathfrak{r}_\xi \subset X$ such that  $$\mathrm{Stab}_{\Gamma}(\xi)= \bigcup \left\lbrace \mathrm{Stab}_{\Gamma}(w): w \in \mathrm{V}(\mathfrak{r}_\xi) \right\rbrace.$$
Thus, by intersecting both sides of the previous equation with $G$, we obtain
\begin{equation}\label{eq G-stabilizers} \mathrm{Stab}_{G}(\xi)= \bigcup \left \lbrace \mathrm{Stab}_{G}(w) : w \in \mathrm{V}(\mathfrak{r}_\xi) \right \rbrace.
\end{equation}
Since $[\Gamma:G]$ is finite, we have that $[\mathrm{Stab}_{\Gamma}(\xi):\mathrm{Stab}_{G}(\xi)]$ is finite.
Then, since Theorem~\ref{Teo ABLL stab}(b) shows that $\mathrm{Stab}_{\Gamma}(\xi)$ is infinite, we deduce that $\mathrm{Stab}_{G}(\xi)$ is non-trivial.
Hence, we deduce from Equation \eqref{eq G-stabilizers} that there exists $v \in \mathrm{V}(\mathfrak{r}_\xi)$ whose stabilizer in $G$ is a non-trivial subgroup of $\mathrm{Stab}_G(\xi)$. In particular $v \in \mathrm{V}(X_{\infty})$, whence there exists a unique visual limit $\xi_v$, which is fixed by $\mathrm{Stab}_G(v)$.
Moreover, since $\mathrm{Stab}_G(v) \subseteq \mathrm{Stab}_G(\xi)$, we conclude $\xi_v=\xi$. 
\end{proof}

\begin{proposition}\label{prop char eu}
One has $\chi(G)= l_0-l_1= \mathrm{rk}_{\mathbb{Z}[G]} L_0 -\mathrm{rk}_{\mathbb{Z}[G]} L_1$.
\end{proposition}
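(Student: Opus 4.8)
The plan is to reorganise the defining alternating series for $\chi(G)$ so as to isolate the contribution of the $G$-unstable subgraph $X_{\infty}$. Since both series in the definition of $\chi(G)$ converge absolutely (all terms being positive), I may split each of them according to whether the simplex lies in $X_{\infty}$, obtaining $\chi(G) = (l_0 - l_1) + \chi_G(X_{\infty})$, where $\chi_G(X_{\infty})$ denotes $\sum 1/\sharp\mathrm{Stab}_G(v) - \sum 1/\sharp\mathrm{Stab}_G(e)$ with $v$ and $e$ ranging over representatives of the $G$-orbits of vertices and geometric edges contained in $X_{\infty}$. Here I use that $S_0 = \mathrm{V}(X)\smallsetminus\mathrm{V}(X_{\infty})$ and $S_1 = \mathrm{E}^{+}(X)\smallsetminus\mathrm{E}^{+}(X_{\infty})$ consist of simplices with trivial $G$-stabiliser, of which there are respectively $l_0$ and $l_1$ modulo $G$, and that $l_i = \mathrm{rk}_{\mathbb{Z}[G]}L_i$ by Definition~\ref{def L_i}. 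It thus suffices to prove $\chi_G(X_{\infty}) = 0$.

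Next I pass to the connected components of $X_{\infty}$. By the proof of Lemma~\ref{lemma H_1} the map $C\mapsto\xi_C$ is a bijection from the set of components of $X_{\infty}$ onto $\partial_{\infty}(X)$, and it is $G$-equivariant thanks to the uniqueness assertion in Proposition~\ref{Lemma action of a p-group}; hence the $G$-orbits of components are in bijection with $\partial_{\infty}(X)/G \cong \mathrm{Pic}(B)$, a finite set since $\mathbb{F}$ is finite (Theorem~\ref{Teo ABLL quotient}). Grouping the absolutely convergent series defining $\chi_G(X_{\infty})$ according to the orbit of the component containing each simplex yields a finite sum $\chi_G(X_{\infty}) = \sum_{[C]} \chi_{G_C}(C)$ over orbit representatives $C$, where $G_C = \mathrm{Stab}_G(C)$ and $\chi_{G_C}(C)$ is the corresponding alternating sum over $G_C\backslash\mathrm{V}(C)$ and $G_C\backslash\mathrm{E}^{+}(C)$. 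So everything reduces to showing $\chi_{G_C}(C) = 0$ for a single component.

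Fix such a $C$ and put $H = G_C$. All the ends $\xi_w$ ($w\in\mathrm{V}(C)$) of Definition~\ref{def of Q_x} coincide with a single $\xi = \xi_C\in\partial_{\infty}(X)$, which is fixed by $H$, and for each $w$ the geodesic ray $\mathfrak{r}_w$ from $w$ to $\xi$ lies inside $C$ — this is exactly what the proof of Lemma~\ref{lemma H_1} establishes. Let $e^{+}(w)$ be the initial edge of $\mathfrak{r}_w$. Then $w\mapsto e^{+}(w)$ is an $H$-equivariant bijection $\mathrm{V}(C)\to\mathrm{E}^{+}(C)$: it is well defined since $\mathfrak{r}_w\subseteq C$, surjective because in the tree $C$ each edge lies on the geodesic toward $\xi$ of exactly one of its two endpoints, injective because a geodesic cannot backtrack, and equivariant because $h\cdot\mathfrak{r}_w = \mathfrak{r}_{h\cdot w}$ for $h\in H$. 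Moreover it preserves stabiliser orders: $\mathrm{Stab}_G(w)$ is a non-trivial finite $p$-subgroup of $\mathcal{G}(k)$ (by discreteness of $G$ in $\mathcal{G}(k_P)$, compactness of vertex stabilisers, and absence of $p'$-torsion), so by Proposition~\ref{Lemma action of a p-group} it fixes the unique end $\xi$, hence fixes $\mathfrak{r}_w$ pointwise, hence fixes $e^{+}(w)$; conversely, since $G$ acts on $X$ without inversions, any element fixing the geometric edge $e^{+}(w)$ fixes both its endpoints, in particular $w$. Thus $\sharp\mathrm{Stab}_H(w) = \sharp\mathrm{Stab}_H(e^{+}(w))$ for every $w$, and matching the two convergent positive series term by term along this bijection gives $\chi_{G_C}(C) = 0$. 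Summing over the finitely many $[C]$ gives $\chi_G(X_{\infty}) = 0$, whence $\chi(G) = l_0 - l_1 = \mathrm{rk}_{\mathbb{Z}[G]}L_0 - \mathrm{rk}_{\mathbb{Z}[G]}L_1$.

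The first two rearrangements are bookkeeping; the crux is the last step, namely the observation that each component of $X_{\infty}$ carries a canonical vertex-to-edge bijection obtained by sending a vertex to the first edge of its ray toward the common end. The two points needing care there — the inclusion $\mathfrak{r}_w\subseteq C$ and the stabiliser equality $\sharp\mathrm{Stab}_G(w) = \sharp\mathrm{Stab}_G(e^{+}(w))$ — both ultimately rest on the uniqueness of the end fixed by a finite $p$-subgroup, i.e. on Proposition~\ref{Lemma action of a p-group}.
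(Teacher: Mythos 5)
Your proof is correct and follows essentially the same route as the paper: split $\chi(G)$ into the contribution of the freely-acted-on part outside $X_{\infty}$, which gives $l_0-l_1$, and the contribution of $X_{\infty}$, which vanishes because sending each unstable vertex $w$ to the first edge of the ray from $w$ toward the unique end fixed by $\mathrm{Stab}_G(w)$ is a stabiliser-order-preserving bijection $\mathrm{V}(X_{\infty})\to\mathrm{E}^{+}(X_{\infty})$ — exactly the paper's map $v\mapsto e_v$. The only inaccuracy is the identification $G\backslash\partial_{\infty}(X)\cong\mathrm{Pic}(B)$ (that bijection holds for $\Gamma$-orbits; for $G$ one only gets finiteness), but this is harmless since the regrouping is already justified by absolute convergence, and your extra pass through connected components is pure bookkeeping on top of the same core argument.
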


\begin{proof}
We decompose the sum introduced in Equation \eqref{eq euler-poincare char} in two parts. One relative to $X_{\infty}$, and other relative to $X$ mod $X_{\infty}$.
We star by study the latter. Indeed, when $v$ (resp. $e$) is a vertex (resp. edge) not contained in $X_{\infty}$, we have $\mathrm{Stab}_G(v)= \lbrace e \rbrace$ (resp. $\mathrm{Stab}_G(v)= \lbrace e \rbrace$). Then, the contribution of $X$ mod $X_{\infty}$ to $\chi(G)$ equals
$$\chi(G)|_{X \text{ mod } X_{\infty} } =  \sharp (G \backslash S_0)-\sharp (G \backslash S_1) = l_0-l_1 .$$
Thus, in order to conclude the result, we just have to show that the contribution of $X_{\infty}$ to $\chi(G)$ is zero. In other words, we have to prove
$$
  \sum_{v \in \mathrm{V}(G \backslash X_{\infty})}  1/\sharp \mathrm{Stab}_G(v)- \sum_{e \in \mathrm{geom(E)}(G \backslash X_{\infty})}  1/ \sharp \mathrm{Stab}_G(e) =0,
$$
or equivalently (c.f.~\S\ref{subsection graphs}),
$$
  \sum_{v \in \mathrm{V}(G \backslash X_{\infty})}  1/\sharp \mathrm{Stab}_G(v)- \sum_{e \in \mathrm{E^{+}}(G \backslash X_{\infty})}  1/ \sharp \mathrm{Stab}_G(e) =0,
$$
where $\mathrm{E^{+}}(G \backslash X_{\infty})=G \backslash \mathrm{E^{+}}(X_{\infty})$, since $G$ respects the orientation on $X$.
Let $v$ be a vertex in $X_{\infty}$. Let $\mathfrak{r}_v$ be the ray introduced in Definition~\ref{def of Q_x}. Let $e=e_v \in \mathrm{E}^{+}(X_{\infty})$ be the unique oriented edge of $\mathfrak{r}_v$ that contains $v$. Since $\mathrm{Stab}_G(v)$ stabilizes $\mathfrak{r}_v$ we get
$$\mathrm{Stab}_G(v) \subseteq \mathrm{Stab}_G(e_v). $$
Moreover, since $G \subseteq \Gamma$ does not change the type of vertices in $X$, we have $\mathrm{Stab}_G(e_v) \subseteq \mathrm{Stab}_G(v)$, whence we deduce $\mathrm{Stab}_G(v)=\mathrm{Stab}_G(e_v)$. Thus, the contribution of $v$ and $e_v$ to $\chi(G)$ cancel. So, in order to conclude the proposition, we just have to prove that the map $\mathrm{V}(X_{\infty}) \to \mathrm{E}^{+}(X_{\infty})$, defined by $v \mapsto e_v$, is a bijection. Set $e \in \mathrm{E}^{+}(X_{\infty})$. Since $\mathrm{Stab}_G(e)$ is a finite non-trivial $p$-group, it fixes a visual limit $\xi_e$ in $\partial_{\infty}(X)$.
Let $\mathfrak{r}_e$ be the smallest ray in $X$ joining $e$ with $\xi_e$ and containing $e$.
Then, the origin $v_e$ of $\mathfrak{r}_e$ is the vertex in $\lbrace o(e), t(e) \rbrace$ ``furthest'' from $\xi_e$. It is not hard to check that the map $ \mathrm{E}^{+}(X_{\infty}) \to \mathrm{V}(X_{\infty})$ defined by $e \mapsto v_e$ is the inverse of the map defined by $v \mapsto e_v$. This concludes the proof.
\end{proof}

\begin{proof}[Proof of Proposition~\ref{Main teo steinberg}] On one hand, by using the definition of relative homology, we have the exact sequence
\begin{multline*}
 \lbrace 0 \rbrace \to H_1( X \text{ mod } X_{\infty}) \to C_1( X \text{ mod } X_{\infty}) \\ \to C_0( X \text{ mod } X_{\infty}) \to H_0( X \text{ mod } X_{\infty}) \to \lbrace 0 \rbrace.
\end{multline*}
On the other hand, it follows from Lemma~\ref{H_0 relative} and \ref{lemma H_1} that $H_0( X \text{ mod } X_{\infty})= \lbrace 0 \rbrace$ and $H_1( X \text{ mod } X_{\infty}) \cong \mathrm{St}$. Since Definition~\ref{def L_i} shows that $L_i=C_i(X \text{ mod } X_{\infty})$, for all $i \in \lbrace 0,1 \rbrace$, we deduce from the preceding exact sequence that
\begin{equation}\label{Ex seq L_i}
\lbrace 0 \rbrace \to \mathrm{St} \to L_1 \to L_0 \to \lbrace 0 \rbrace, 
\end{equation}
is also exact. Moreover, since $L_0$ is free, the exact sequence~\eqref{Ex seq L_i} splits. This implies that $L_1 \cong \mathrm{St} \oplus L_0$. Thus, since Proposition~\ref{prop char eu} shows $ \mathrm{rk}_{\mathbb{Z}[G]} L_1 -\mathrm{rk}_{\mathbb{Z}[G]} L_0 =-\chi(G)$, the result follows.
\end{proof}

\section{Maximal unipotent subgroups}\label{Section maximal unipotent}

Let $\Gamma=\mathcal{G}(A)$ and let $G$ be a finite index subgroup of $\Gamma$. We say that a subgroup of $\mathcal{G}(k)$ is unipotent when it only contains unipotent elements. The preceding definition makes sense since $\mathcal{G}(k)$ admits a representation in the matricial groups $\mathrm{SL}_3(\ell)$.
The main goal of this section is to describe the conjugacy classes of maximal unipotent subgroups in $G$. In order to do this, we do not assume that $\mathbb{F}$ is finite or $G$ is without $p'$-torsion.

\subsection{A proof of Theorem~\ref{main teo unipotent subgroups}}

In order to prove Theorem~\ref{main teo unipotent subgroups} we need some preparation, which we divide into four lemmas.

\begin{lemma}\label{lemma unipotents in G}
Assume that $\mathbb{F}$ is perfect. Let $U$ be an unipotent subgroup of $\mathcal{G}(k)$. Then, there exists $(u,v) \in H(\ell,k)\cup \lbrace \infty \rbrace$ such that $U \subseteq g_{u,v}^{-1} \mathcal{U}_a(k) g_{u,v}$.
\end{lemma}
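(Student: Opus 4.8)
\textbf{Plan of proof for Lemma~\ref{lemma unipotents in G}.}
The plan is to reduce the statement to a question about unipotent subgroups of $\mathcal{G}(k)$ being contained in (conjugates of) a Borel subgroup, and then use the dictionary between Borel subgroups, boundary points, and the variety $H(\ell,k)\cup\{\infty\}$ already established in Corollary~\ref{coro visual limit and H(L,K)}. First I would recall that, since $\mathcal{G}$ is simply connected semisimple of split rank $1$, the Bruhat–Tits tree $X$ is a genuine tree on which $\mathcal{G}(k)$ acts, and that a unipotent element of $\mathcal{G}(k)$ (viewed inside $\mathrm{SL}_3(\ell)$) acts on $X$ without inversions, fixing at least a vertex or an end. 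The key classical input is that a bounded (in particular, unipotent) subgroup of $\mathcal{G}(k)$ either fixes a point of $X$ or fixes an end of $X$; combined with the fact that unipotent elements have no nonzero fixed \emph{weight} behaviour forcing them to lie in a parabolic, one deduces that a unipotent subgroup $U$ fixes a point $\xi\in\partial_\infty(X)$. I expect to phrase this via the structure theory: an element of $\mathcal{G}(k)$ lying in no proper parabolic is either elliptic of finite order prime-to-$p$ behaviour or hyperbolic, and a unipotent element is neither unless it lies in the unipotent radical of a Borel.

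Granting that $U$ fixes some $\xi\in\partial_\infty(X)$, Lemma~\ref{stab visual limit} (and its $g_{u,v}$-conjugates, cf.~the proof of Corollary~\ref{coro visual limit and H(L,K)}) gives $U\subseteq\mathrm{Stab}_{\mathcal{G}(k)}(\xi)=g_{u,v}^{-1}\mathcal{B}(k)g_{u,v}$ for the unique $(u,v)=f^{-1}(\xi)\in H(\ell,k)\cup\{\infty\}$. It remains to upgrade "$U$ lies in a conjugate of the Borel $\mathcal{B}$" to "$U$ lies in a conjugate of the unipotent radical $\mathcal{U}_a$". Conjugating by $g_{u,v}$ we may assume $U\subseteq\mathcal{B}(k)$, and using the description $\mathcal{B}=\{\tilde a(t)\mathrm{u}_a(x,y)\}$ from \S\ref{subsection radical datum} together with the semidirect product decomposition $\mathcal{B}(k)=\mathcal{T}(k)\ltimes\mathcal{U}_a(k)$, I would argue that the image of $U$ under the projection $\mathcal{B}(k)\twoheadrightarrow\mathcal{T}(k)\cong R_{\ell/k}(\mathbb{G}_{m,\ell})(k)=\ell^{*}$ consists of unipotent elements of a torus, hence is trivial: a nontrivial semisimple element of $\mathcal{T}(k)$ is not unipotent, and the composite of a unipotent element of $\mathcal{B}(k)$ with this projection is a semisimple (diagonal) element, which can only be unipotent if it is the identity. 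Therefore $U\subseteq\mathcal{U}_a(k)$, i.e.\ $U\subseteq g_{u,v}^{-1}\mathcal{U}_a(k)g_{u,v}$ after undoing the conjugation.

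The main obstacle, and the place where the hypothesis that $\mathbb{F}$ (equivalently $k$) is perfect should be used, is the very first reduction: showing that a unipotent subgroup of $\mathcal{G}(k)$ fixes an end of the tree rather than merely a bounded subset. In positive characteristic, unipotent elements can have infinite order and the naive "bounded implies fixed vertex" argument must be replaced by a fixed-end statement; perfectness guarantees that unipotent elements of $\mathcal{G}(k)$ are exactly those lying in unipotent radicals of proper parabolics (so that the group generated by a unipotent subgroup is again unipotent and contained in such a radical), which is what pins down the fixed end. I would either cite the Borel–Tits structure of unipotent subgroups of reductive groups over a perfect field, or give a direct argument: pick $g\in U$ nontrivial; being unipotent it fixes a point of $X$ and, not being hyperbolic, its fixed-point set in $X\cup\partial_\infty X$ is nonempty and $U$-stable in a suitable sense; iterating over generators and using that a descending intersection of subtrees with a common end retains that end yields the required $\xi$. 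Once this fixed end is secured, the rest is the routine Borel/unipotent bookkeeping sketched above.
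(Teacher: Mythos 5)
There is a genuine gap, and it sits exactly where you flagged ``the main obstacle'': the reduction to $U$ lying in (a conjugate of) the unipotent radical of a Borel. Your proposed justification --- ``cite the Borel--Tits structure of unipotent subgroups of reductive groups over a perfect field'' --- does not apply here, because $k$ is \emph{not} perfect. The hypothesis of the lemma is that $\mathbb{F}$ is perfect; $k$ is a function field in one variable over $\mathbb{F}$ in characteristic $p$, so $[k:k^p]=p>1$ and $k$ is imperfect no matter what. Your parenthetical ``(equivalently $k$) is perfect'' is the precise point where the argument breaks: over an imperfect field, unipotent subgroups of a reductive group need not lie in the unipotent radical of a parabolic in general, so some arithmetic input about $k$ beyond ``characteristic $p$'' is unavoidable. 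The paper's proof spends its entire first paragraph using the perfectness of $\mathbb{F}$ to show $[k:k^p]=p$, and then invokes Gille's theorem (\cite[Theorem 2]{Gille}), which establishes exactly this embedding of a unipotent subgroup into $\mathcal{R}_u(\mathcal{P})(k)$ for a simply connected semisimple group over a field with $[k:k^p]\le p$. That citation, or an equivalent substitute, is the missing ingredient.

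Your alternative ``direct argument'' on the tree is also not yet a proof: the step ``a descending intersection of subtrees with a common end retains that end'' presupposes the common end, which is what you are trying to produce, and a group all of whose elements are elliptic may a priori fix a vertex without fixing any end, a case your sketch does not close. If you want a geometric/linear-algebraic route that avoids Gille, the viable one is via Kolchin's theorem: a unipotent subgroup of $\mathrm{SL}_3(\ell)$ has a nonzero common fixed vector, and the argument of Lemma~\ref{lemma fix points in V}--\ref{lemma fixed point} (splitting off anisotropic fixed vectors) then produces a common \emph{isotropic} fixed line, hence by Lemma~\ref{lemma corr with an lines} a fixed point of $\partial_\infty(X)$; after that, your final paragraph (conjugate into $\mathcal{B}(k)$ by Lemma~\ref{stab visual limit}, then kill the torus component because a unipotent element cannot have a nontrivial semisimple diagonal part) is correct and finishes the proof. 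As written, however, the proposal leaves the essential step unproved and rests it on a theorem whose hypothesis fails.
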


\begin{proof} Recall that, since $k$ is the function field of a curve, $k$ is a finite extension of $\mathbb{F}(u)$, where $u \in k$ is transcendental over $\mathbb{F}$.
Let us write $k=\mathbb{F}(u, \theta_1, \cdots,\theta_r)$, where $\theta_i$ is algebraic over $\mathbb{F}(u)$.
Since $\mathbb{F}$ is perfect, we have $k^p=\mathbb{F}(u^p, \theta_1^p, \cdots,\theta_r^p)$.
Let $f_i(T)$ (resp. $g_i(T)$) be the irreducible polynomial of $\theta_i$ (resp. $\theta_i^p$) in $L_i:=\mathbb{F}(u, \theta_1, \cdots,\theta_{i-1})$ (resp. in $F_i:=\mathbb{F}(u^p, \theta_1^p, \cdots,\theta_{i-1}^p)$). 
On the one hand, since $0=g_i(\theta_i^p)=(\tilde{g}_i(\theta_i))^p$, for some $\tilde{g}_i \in L_i[T]$ with the same degree of $g_i$, we have $\deg(g_i) \geq \deg(f_i)$.
On the other hand, since $0=f_i(\theta_i)^p=\tilde{f}_i(\theta_i^p)$, for $\tilde{f}_i \in F_i[T]$ with the same degree of $f_i$, we have $\deg(g_i) \leq \deg(f_i)$.
Thus 
$$[\mathbb{F}(u, \theta_1, \cdots,\theta_r): \mathbb{F}(u)]=\prod_{i=1}^r \deg(f_i)=\prod_{i=1}^r \deg(g_i)=[\mathbb{F}(u^p, \theta_1^p, \cdots,\theta_r^p): \mathbb{F}(u^p)],$$
whence $[k:k^p]=[\mathbb{F}(u):\mathbb{F}(u^p)]=p$.

Let $U$ be a unipotent subgroup of $\mathcal{G}(k)$. Since $\mathcal{G}$ is a semi-simple simply connected $k$-group and $[k:k^p] \leq p$, it follows from \cite[Theorem 2]{Gille} that $U$ is $k$-embeddable in the unipotent radical $\mathcal{R}_u(\mathcal{P})$ of a $k$-parabolic subgroup $\mathcal{P}$ of $\mathcal{G}$. In particular, $U$ is contained in $\mathcal{R}_u(\mathcal{P})(k)$. Moreover, since $\mathcal{R}_u(\mathcal{P})$ is contained in the unipotent radical of some Borel subgroup of $\mathcal{G}$, and two Borel subgroup belong in the same $\mathcal{G}(k)$-conjugacy class, we conclude $U \subseteq \tau^{-1} \mathcal{U}_a(k) \tau$, for some $\tau \in \mathcal{G}(k)$.

By the Bruhat decomposition, we write $\tau=b \mathrm{s}^{\epsilon}\mathrm{u}_a(u,v)$, where $b \in \mathcal{B}(k)$ and $\epsilon \in \lbrace 0,1 \rbrace$.
Since $\mathcal{B}(k)$ normalizes $\mathcal{U}_a(k)$, we have $\tau^{-1} \mathcal{U}_a(k) \tau=\mathcal{U}_a(k)$, if $\epsilon=0$, and $\tau^{-1} \mathcal{U}_a(k)\tau=(\mathrm{s}\mathrm{u}_a(u,v))^{-1} \mathcal{U}_a(k) (\mathrm{s}\mathrm{u}_a(u,v))$, if $\epsilon=1$.
Thus, if we write $g_{u,v}=\mathrm{id}$, if $\epsilon=0$, and $g_{u,v}=\mathrm{s} \mathrm{u}_a(u,v)$, if $\epsilon=1$, then $\tau^{-1} \mathcal{U}_a(k) \tau = g_{u, v}^{-1} \mathcal{U}_a(k) g_{u,v} $.
Hence, we conclude that $U$ is contained in $g_{u,v}^{-1} \mathcal{U}_a(k) g_{u,v}$, for some $(u,v) \in H(\ell,k) \cup \lbrace \infty \rbrace$.
\end{proof}

\begin{lemma}\label{lemma fin index unip quot}
For each $(u,v) \in H(\ell,k) \cup \lbrace \infty \rbrace$, and each $G$ of finite index in $\Gamma$, let
$$\tilde{G}_{u,v}:=(g_{u,v}^{-1} \mathcal{U}_a(k) g_{u,v} \cap G)/(g_{u,v}^{-1} \mathcal{U}_{2a}(k) g_{u,v} \cap G).$$
Then, $ \tilde{G}_{u,v}$ is a finite index subgroup of $\tilde{\Gamma}_{u,v}$. 
In particular, $(g_{u,v}^{-1} \mathcal{U}_a(k) g_{u,v} \cap G) \smallsetminus (g_{u,v}^{-1} \mathcal{U}_{2a}(k) g_{u,v} \cap G)$ is non empty.

\end{lemma}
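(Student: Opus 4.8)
The statement asserts that $\tilde{G}_{u,v}$ has finite index in $\tilde{\Gamma}_{u,v}$, where both are the quotients of the $G$-points (resp.\ $\Gamma$-points) of the conjugate $g_{u,v}^{-1}\mathcal{U}_a(k)g_{u,v}$ by the $G$-points (resp.\ $\Gamma$-points) of $g_{u,v}^{-1}\mathcal{U}_{2a}(k)g_{u,v}$. The natural approach is to conjugate everything back to the standard position by $g_{u,v}$, so that the ambient conjugates become $\mathcal{U}_a(k)$ and $\mathcal{U}_{2a}(k)$, while $G$ and $\Gamma$ get replaced by $g_{u,v}Gg_{u,v}^{-1}$ and $g_{u,v}\Gamma g_{u,v}^{-1}$. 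Since $[\Gamma:G]<\infty$, one has $[g_{u,v}\Gamma g_{u,v}^{-1} : g_{u,v}Gg_{u,v}^{-1}]<\infty$ as well, so it suffices to prove the statement with $G$ replaced by any finite-index subgroup of a fixed reference lattice; after this reduction, the correspondence theorem for finite-index subgroups (intersection of a finite-index subgroup with a fixed subgroup stays finite-index inside it) gives that $g_{u,v}^{-1}\mathcal{U}_a(k)g_{u,v}\cap G$ has finite index in $g_{u,v}^{-1}\mathcal{U}_a(k)g_{u,v}\cap\Gamma$, and likewise at the level of $\mathcal{U}_{2a}$.

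The remaining point is to pass from finite index of the ``numerators'' to finite index of the quotients. Write $U=g_{u,v}^{-1}\mathcal{U}_a(k)g_{u,v}\cap\Gamma$, $U'=g_{u,v}^{-1}\mathcal{U}_a(k)g_{u,v}\cap G$, $Z=g_{u,v}^{-1}\mathcal{U}_{2a}(k)g_{u,v}\cap\Gamma$, $Z'=g_{u,v}^{-1}\mathcal{U}_{2a}(k)g_{u,v}\cap G$, so that $\tilde\Gamma_{u,v}=U/Z$ and $\tilde G_{u,v}=U'/Z'$, with $Z'=U'\cap Z$. Here $Z$ is central in $U$ (it is the image of the center of the Heisenberg-type group $\mathcal{U}_a$, which by \S\ref{subsection radical datum} contains $\mathcal{U}_{2a}$ as its center) and hence normal in $U$, and $Z'\trianglelefteq U'$. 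There is a natural injection $U'/Z'=U'/(U'\cap Z)\hookrightarrow U/Z$; I must show its image has finite index. Since $[U:U']<\infty$, it is enough to observe that $U'Z/Z\cong U'/(U'\cap Z)=U'/Z'$ inside $U/Z$, and $[U/Z : U'Z/Z]=[U:U'Z]\le[U:U']<\infty$. So $\tilde G_{u,v}\cong U'Z/Z$ is a finite-index subgroup of $\tilde\Gamma_{u,v}=U/Z$, which is exactly the claim.

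Finally, for the ``in particular'' clause: Theorem~\ref{Teo ABLL stab}(g) (together with (f)) identifies $U/Z$ with a nontrivial finitely generated $A$-module, hence $\tilde\Gamma_{u,v}\neq\{0\}$; since $\tilde G_{u,v}$ has finite index in it and these are torsion-free abelian groups of positive rank (being $A$-modules containing a nonzero ideal of $B$ in the relevant layer), $\tilde G_{u,v}\neq\{0\}$ too, which says precisely that $(g_{u,v}^{-1}\mathcal{U}_a(k)g_{u,v}\cap G)\smallsetminus(g_{u,v}^{-1}\mathcal{U}_{2a}(k)g_{u,v}\cap G)$ is nonempty. The one genuine subtlety — the main obstacle — is making sure the conjugation-and-intersection reductions are compatible: concretely, that the intersections in Theorem~\ref{Teo ABLL stab} and Lemma~\ref{lemma fin index unip quot} refer to the same groups after conjugating by $g_{u,v}$, and that $\mathcal{U}_{2a}(k)\subseteq\mathcal{U}_a(k)$ (so that $Z\subseteq U$ and the quotients make sense); both follow from the explicit parametrizations $\mathrm{u}_a,\mathrm{u}_{2a}$ in \S\ref{subsection radical datum}, but they should be stated carefully.
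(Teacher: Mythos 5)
Your proof is correct and follows essentially the same route as the paper: the finiteness of $[\Gamma:G]$ forces $[U:U']<\infty$ for your $U=g_{u,v}^{-1}\mathcal{U}_a(k)g_{u,v}\cap\Gamma$ and $U'=g_{u,v}^{-1}\mathcal{U}_a(k)g_{u,v}\cap G$, this descends to the quotients modulo the $\mathcal{U}_{2a}$-intersections via $U'Z/Z\hookrightarrow U/Z$, and nonemptiness follows because $\tilde{\Gamma}_{u,v}\cong H(u,v)/H(u,v)^{0}$ is infinite by Theorem~\ref{Teo ABLL stab}(g) --- exactly the paper's argument. One small slip: these quotients are \emph{not} torsion-free abelian groups of positive rank (they are $\mathbb{F}_p$-vector spaces, every nonzero element having order $p$), but all you actually need is that a finite-index subgroup of an infinite group is nontrivial, so your conclusion stands.
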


\begin{proof}
Note that $Q_{u,v}:=(g_{u,v}^{-1} \mathcal{U}_a(k) g_{u,v} \cap \Gamma)/(g_{u,v}^{-1} \mathcal{U}_{a}(k) g_{u,v} \cap G)$ injects into the set $\Gamma/G$. Then $Q_{u,v}$ is finite. 
Now, let $f: g_{u,v}^{-1} \mathcal{U}_a(k) g_{u,v} \cap \Gamma \to \tilde{\Gamma}_{u,v}$ be the canonical projection. Since $f(g_{u,v}^{-1} \mathcal{U}_a(k) g_{u,v} \cap G)=\tilde{G}_{u,v}$, there exists a surjective function from $Q_{u,v}$ onto $\tilde{\Gamma}_{u,v}/\tilde{G}_{u,v}$. 
Thus, the first statement follows.

Note that, if $(g_{u,v}^{-1} \mathcal{U}_a(k) g_{u,v} \cap G) \smallsetminus (g_{u,v}^{-1} \mathcal{U}_{2a}(k) g_{u,v} \cap G)=\emptyset$, then $\tilde{G}_{u,v}=\lbrace \mathrm{id} \rbrace$.
This implies that $\tilde{\Gamma}_{u,v}$ is finite.
But, $\tilde{\Gamma}_{u,v}$ is parametrized by $H(u,v)/H(u,v)^{0}$, which is isomorphic to an $A$-submodule of $\ell$ containing a non-zero ideal $I \subseteq B$ according to Theorem~\ref{Teo ABLL stab}(g). In particular, $\tilde{\Gamma}_{u,v}$ is infinite.
\end{proof}

\begin{lemma}\label{lemma normalizacion de borel}
Let $g \in \mathcal{G}(k)$ such that $g \mathrm{u}_a(x,y) g^{-1} \in \mathcal{U}_a$, for some $\mathrm{u}_a(x,y) \in \mathcal{U}_a(k) \smallsetminus \mathcal{U}_{2a}(k)$. Then $g \in \mathcal{B}(k)$.
\end{lemma}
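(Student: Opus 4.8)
The plan is to argue by contradiction using the Bruhat decomposition of $\mathcal{G}(k)$ relative to $\mathcal{B}$, exactly as in the proofs of Corollary~\ref{coro visual limit and H(L,K)} and Proposition~\ref{Lemma action of a p-group}. Since the relative Weyl group of $\mathcal{G}$ at $P$ has order two, every $g \in \mathcal{G}(k)$ can be written $g = b_1 \mathrm{s}^{\epsilon} b_2$ with $b_1, b_2 \in \mathcal{B}(k)$ and $\epsilon \in \lbrace 0,1 \rbrace$. If $\epsilon = 0$ then $g \in \mathcal{B}(k)$ and we are done, so we assume $\epsilon = 1$ and seek a contradiction.

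Set $w := b_2\, \mathrm{u}_a(x,y)\, b_2^{-1}$. Since $\mathcal{U}_a(k)$ is the unipotent radical of $\mathcal{B}$, hence normal in $\mathcal{B}(k)$, we have $w \in \mathcal{U}_a(k)$; moreover $w \neq \mathrm{id}$, because $\mathrm{u}_a(x,y) \notin \mathcal{U}_{2a}(k)=\lbrace \mathrm{u}_a(0,v):(0,v)\in H(\ell,k)^0\rbrace$ forces the first parameter to be nonzero, so $\mathrm{u}_a(x,y)\neq\mathrm{id}$. The hypothesis $g\,\mathrm{u}_a(x,y)\,g^{-1}\in\mathcal{U}_a(k)$ now rewrites as $b_1\,(\mathrm{s}\, w\, \mathrm{s}^{-1})\,b_1^{-1}\in\mathcal{U}_a(k)$, and since $b_1$ normalizes $\mathcal{U}_a(k)$ this yields $\mathrm{s}\, w\, \mathrm{s}^{-1}\in\mathcal{U}_a(k)$.

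On the other hand, by the definition of the opposite root subgroup, $\mathrm{u}_{-a}=\mathrm{s}\cdot\mathrm{u}_a\cdot\mathrm{s}$ and $\mathrm{s}^2=\mathrm{id}$, so $\mathrm{s}\,\mathcal{U}_a(k)\,\mathrm{s}^{-1}=\mathcal{U}_{-a}(k)$; hence $\mathrm{s}\, w\, \mathrm{s}^{-1}$ lies in $\mathcal{U}_a(k)\cap\mathcal{U}_{-a}(k)$. Reading off the parametrizations in \S\ref{subsection radical datum}, $\mathcal{U}_a$ consists of upper unitriangular matrices and $\mathcal{U}_{-a}$ of lower unitriangular matrices, so $\mathcal{U}_a(k)\cap\mathcal{U}_{-a}(k)=\lbrace\mathrm{id}\rbrace$. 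Therefore $\mathrm{s}\, w\, \mathrm{s}^{-1}=\mathrm{id}$, i.e.\ $w=\mathrm{id}$, contradicting $w\neq\mathrm{id}$. Thus $\epsilon=0$, and $g\in\mathcal{B}(k)$.

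The only points requiring a little care are bookkeeping ones: that conjugation by $b_1,b_2\in\mathcal{B}(k)$ keeps us inside $\mathcal{U}_a(k)$ (just normality of the unipotent radical in the Borel) and the elementary identity $\mathcal{U}_a(k)\cap\mathcal{U}_{-a}(k)=\lbrace\mathrm{id}\rbrace$; neither is a genuine obstacle, so the lemma is essentially formal once the Bruhat decomposition is in hand. As an alternative one can bypass Bruhat: since the first parameter of $\mathrm{u}_a(x,y)$ is nonzero, the only line of $\ell^3$ fixed by $\mathrm{u}_a(x,y)$ is $\langle(1,0,0)\rangle$, which is isotropic, so by Lemma~\ref{lemma corr with an lines} the element $\mathrm{u}_a(x,y)$ fixes exactly the point $\xi_\infty$ of $\partial_\infty(X)$; the same is true of its conjugate $g\,\mathrm{u}_a(x,y)\,g^{-1}\in\mathcal{U}_a(k)\smallsetminus\lbrace\mathrm{id}\rbrace$, and since $g\,\mathrm{u}_a(x,y)\,g^{-1}$ also fixes $g\cdot\xi_\infty$ we get $g\cdot\xi_\infty=\xi_\infty$, whence $g\in\mathrm{Stab}_{\mathcal{G}(k)}(\xi_\infty)=\mathcal{B}(k)$ by Lemma~\ref{stab visual limit}.
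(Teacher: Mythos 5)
Your proof is correct, but it takes a genuinely different route from the paper. The paper argues by brute force: writing $g=(a_{ij})\in\mathrm{SL}_3(\ell)$ and equating $g\,\mathrm{u}_a(x,y)=\mathrm{u}_a(z,w)\,g$, it compares matrix entries (last row, then first and second columns, using $x\neq 0$ at each step) to force $a_{31}=a_{32}=a_{21}=0$, i.e.\ $g$ upper triangular. You instead use the rank-one Bruhat decomposition $g=b_1\mathrm{s}^{\epsilon}b_2$, the normality of $\mathcal{U}_a(k)$ in $\mathcal{B}(k)$, and the identities $\mathrm{s}\,\mathcal{U}_a(k)\,\mathrm{s}^{-1}=\mathcal{U}_{-a}(k)$ and $\mathcal{U}_a(k)\cap\mathcal{U}_{-a}(k)=\lbrace\mathrm{id}\rbrace$ to rule out $\epsilon=1$; all of these ingredients are already used elsewhere in the paper, so nothing circular is going on. Your argument is more structural: it avoids entry chasing, it works verbatim for any quasi-split group of relative rank one (this is where it is special: in higher rank $w\,\mathcal{U}\,w^{-1}\cap\mathcal{U}$ is nontrivial for non-longest Weyl elements, so the statement and the argument both fail), and it actually proves a slightly stronger fact, since you only use $\mathrm{u}_a(x,y)\neq\mathrm{id}$ rather than $\mathrm{u}_a(x,y)\notin\mathcal{U}_{2a}(k)$ --- a strengthening which is indeed true here, as your second, geometric argument makes transparent. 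That alternative argument (the unique isotropic line fixed by $\mathrm{u}_a(x,y)$ is $\langle(1,0,0)\rangle$, so by equivariance $g$ fixes $\xi_{\infty}$ and Lemma~\ref{stab visual limit} applies) is also fine; the only point stated a bit loosely is the claim that the conjugate again has a unique fixed boundary point --- cleaner is to note that its fixed-point set on $\partial_{\infty}(X)$ is exactly $g\cdot\lbrace\xi_{\infty}\rbrace$ while it also fixes $\xi_{\infty}$ because it lies in $\mathcal{U}_a(k)\subset\mathcal{B}(k)$, whence $g\cdot\xi_{\infty}=\xi_{\infty}$. What the paper's computation buys is complete self-containedness at the level of $3\times 3$ matrices; what yours buys is brevity, conceptual clarity, and generality within the rank-one setting.
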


\begin{proof}
Let $g \in \mathcal{G}(k)$ such that $g \mathrm{u}_a(x,y) g^{-1} \in \mathcal{U}_a(k)$, for some $\mathrm{u}_a(x,y) \in \mathcal{U}_a(k) \smallsetminus \mathcal{U}_{2a}(k)$. Then $(x,y) \in H(\ell,k)$ satisfies $x \neq 0$.
 Since $\mathcal{G}(k)$ embeds into $\mathrm{SL}_3(\ell)$, we can write $g=(a_{i,j})_{i,j=1}^3 \in \mathrm{SL}_3(\ell)$. Moreover, since $g \mathrm{u}_a(x,y) g^{-1} \in \mathcal{U}_a(k)$ , there exists $(z,w) \in H(\ell,k)$ such that $g\mathrm{u}_a(x,y)g^{-1}= \mathrm{u}_a(z,w)$. In other words, we have
$$
\textnormal{\footnotesize$\begin{pmatrix}
        a_{11}    &  a_{12}  &   a_{13}  \\
        a_{21}   &   a_{22}   & a_{23}    \\
        a_{31}   & a_{32} &   a_{33} \\
    \end{pmatrix}$\normalsize} \textnormal{\footnotesize$\begin{pmatrix}
        1    &  -\overline{x}  &   y \\
        0   &   1   & x    \\
        0   & 0 &  1 \\
    \end{pmatrix}$\normalsize} =
    \textnormal{\footnotesize$\begin{pmatrix}
        1    &  -\overline{z}  &   w \\
        0   &   1   &  z   \\
        0   & 0 &  1 \\
    \end{pmatrix}$\normalsize} \textnormal{\footnotesize$\begin{pmatrix}
        a_{11}    &  a_{12}  &   a_{13}  \\
        a_{21}   &   a_{22}   & a_{23}    \\
        a_{31}   & a_{32} &   a_{33} \\
    \end{pmatrix}$\normalsize}.
$$
Then, the matrix
$$
    \textnormal{\footnotesize$\begin{pmatrix}
        a_{11}    &  a_{12}-\overline{x}a_{11}  &   a_{13}+x a_{12}+y a_{11}  \\
        a_{21}   &   a_{22}-\overline{x}a_{21}   & a_{23}+ x a_{22}+ y a_{21}   \\
        a_{31}   & a_{32}-\overline{x}a_{31} &   a_{33} + x a_{32}+ y a_{31} \\
    \end{pmatrix}$\normalsize} $$
equals
$$    \textnormal{\footnotesize$\begin{pmatrix}
        a_{11}- \overline{z} a_{21}+ w a_{31}    &   a_{12}- \overline{z} a_{22}+ w a_{32} &    a_{13}- \overline{z} a_{23}+ w a_{33}  \\
        a_{21}+ z a_{31}   &   a_{22}+ z a_{32}   & a_{23}+ z a_{33}    \\
        a_{31}   & a_{32} &   a_{33}\\
    \end{pmatrix}$\normalsize}.
$$

Thus, if we analyze the coordinates in the last row of the previous matrices, since $x \neq 0$, we deduce $a_{31}=0$, and then $a_{32}=0$. Note that, if $z=0$ (and hence $w\neq 0$), then, by comparing the second columns of the preceding matrices, we obtain that $a_{11}=a_{21}=0$, which contradicts the fact that $g$ is invertible. Hence, we may assume that $z \neq 0$. We deduce then, by comparing the first columns of the previous matrices, that $a_{21}=0$. Thus $g$ is a upper triangular matrix, whence we conclude that $g \in \mathcal{B}(k)$.
\end{proof}

\begin{lemma}\label{lemma cont unip}
Let $(u_1, v_1), (u_2,v_2)$ be two pairs in  $H(\ell,k) \cup \lbrace \infty \rbrace$.
Then $g_{u_1, v_1}^{-1} \mathcal{U}_a(k) g_{u_1,v_1} \cap G$ is contained in $g_{u_2, v_2}^{-1} \mathcal{U}_a(k) g_{u_2,v_2} \cap G$ precisely when $g_{u_2,v_2} g_{u_1,v_1}^{-1}$ belongs to $\mathcal{B}(k)$.
Moreover, in both cases $g_{u_1, v_1}^{-1} \mathcal{U}_a(k) g_{u_1,v_1} \cap G$ equals $g_{u_2, v_2}^{-1} \mathcal{U}_a(k) g_{u_2,v_2} \cap G$.
\end{lemma}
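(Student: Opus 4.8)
The plan is to establish the claimed equivalence by proving each implication in turn, noting that the backward implication will also yield the final ``moreover'' sentence for free.

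First I would handle the backward direction (together with the equality statement) by a direct normalizer computation. Suppose $b:=g_{u_2,v_2}g_{u_1,v_1}^{-1}\in\mathcal{B}(k)$, so $g_{u_2,v_2}=b\,g_{u_1,v_1}$. Since $\mathcal{U}_a$ is the unipotent radical of $\mathcal{B}$, it is normal in $\mathcal{B}$, hence $b^{-1}\mathcal{U}_a(k)b=\mathcal{U}_a(k)$; substituting gives
$$g_{u_2,v_2}^{-1}\mathcal{U}_a(k)g_{u_2,v_2}=g_{u_1,v_1}^{-1}b^{-1}\mathcal{U}_a(k)b\,g_{u_1,v_1}=g_{u_1,v_1}^{-1}\mathcal{U}_a(k)g_{u_1,v_1}.$$
Intersecting both sides with $G$ produces the asserted equality of subgroups of $G$, which of course contains the required inclusion. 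This single computation disposes of one implication and of the ``moreover'' clause simultaneously.

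Next I would prove the forward direction. Assume $g_{u_1,v_1}^{-1}\mathcal{U}_a(k)g_{u_1,v_1}\cap G\subseteq g_{u_2,v_2}^{-1}\mathcal{U}_a(k)g_{u_2,v_2}\cap G$. By Lemma~\ref{lemma fin index unip quot} the difference $(g_{u_1,v_1}^{-1}\mathcal{U}_a(k)g_{u_1,v_1}\cap G)\smallsetminus(g_{u_1,v_1}^{-1}\mathcal{U}_{2a}(k)g_{u_1,v_1}\cap G)$ is non-empty, so I fix $h$ in it and write $\mathrm{u}_a(x,y)=g_{u_1,v_1}\,h\,g_{u_1,v_1}^{-1}$ for some $(x,y)\in H(\ell,k)$; the fact that $h$ lies outside $g_{u_1,v_1}^{-1}\mathcal{U}_{2a}(k)g_{u_1,v_1}$ translates (since $\mathcal{U}_{2a}$ is the image of $H(\ell,k)^0=\{(0,v)\}$ under $\mathrm{u}_a$) into $x\neq 0$, i.e.\ $\mathrm{u}_a(x,y)\in\mathcal{U}_a(k)\smallsetminus\mathcal{U}_{2a}(k)$. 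The inclusion hypothesis then gives $h\in g_{u_2,v_2}^{-1}\mathcal{U}_a(k)g_{u_2,v_2}$, whence, setting $g:=g_{u_2,v_2}g_{u_1,v_1}^{-1}$, we obtain $g\,\mathrm{u}_a(x,y)\,g^{-1}=g_{u_2,v_2}\,h\,g_{u_2,v_2}^{-1}\in\mathcal{U}_a(k)$. Lemma~\ref{lemma normalizacion de borel}, applied to this $g$ and this $\mathrm{u}_a(x,y)$, yields $g\in\mathcal{B}(k)$, i.e.\ $g_{u_2,v_2}g_{u_1,v_1}^{-1}\in\mathcal{B}(k)$, completing the argument.

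The substantive work here is not in this lemma but in its two predecessors: Lemma~\ref{lemma normalizacion de borel} (the normalizer-of-the-Borel computation) and Lemma~\ref{lemma fin index unip quot} (which, via Theorem~\ref{Teo ABLL stab}(g), guarantees a unipotent element of $G$ with non-trivial $a$-component); the present proof is essentially assembly plus conjugation bookkeeping. The one point that deserves care is that the hypothesis $x\neq 0$ in Lemma~\ref{lemma normalizacion de borel} is genuinely used, so one really needs the strict inclusion of the $\mathcal{U}_{2a}$-part inside the $\mathcal{U}_a$-part inside $G$ (equivalently the non-triviality of the quotient $\tilde G_{u_1,v_1}$), not merely the infiniteness of $g_{u_1,v_1}^{-1}\mathcal{U}_a(k)g_{u_1,v_1}\cap G$.
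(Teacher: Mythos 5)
Your proof is correct and follows essentially the same route as the paper's: the backward direction (and the ``moreover'' clause) via normality of $\mathcal{U}_a$ in $\mathcal{B}$, and the forward direction by producing, via Lemma~\ref{lemma fin index unip quot}, an element $\mathrm{u}_a(x,y)$ with $x\neq 0$ conjugated into $\mathcal{U}_a(k)$ and then invoking Lemma~\ref{lemma normalizacion de borel}. Your closing observation that the non-triviality of $\tilde G_{u_1,v_1}$ (not mere infiniteness of the unipotent part) is what makes the hypothesis $x\neq 0$ available is exactly the right point of care.
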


\begin{proof}
Since $\mathcal{B}(k)$ normalizes $\mathcal{U}_a(k)$, if $g_{u_2,v_2} g_{u_1,v_1}^{-1}$ belongs to $\mathcal{B}(k)$, then the group $g_{u_1, v_1}^{-1} \mathcal{U}_a(k) g_{u_1,v_1} \cap G$ equals $g_{u_2, v_2}^{-1} \mathcal{U}_a(k) g_{u_2,v_2} \cap G$. Now, we prove the converse. Indeed, it follows from Lemma~\ref{lemma fin index unip quot} that there exists $\rho:=g_{u_1, v_1}^{-1} \mathrm{u}_a(x,y) g_{u_1, v_1}$ which belongs to $(g_{u_1, v_1}^{-1} \mathcal{U}_a(k) g_{u_1,v_1} \cap G) \smallsetminus (g_{u_1, v_1}^{-1} \mathcal{U}_{2a}(k) g_{u_1,v_1} \cap G)$.
Then, if $g_{u_1, v_1}^{-1} \mathcal{U}_a(k) g_{u_1,v_1} \cap G$ is contained in $g_{u_2, v_2}^{-1} \mathcal{U}_a(k) g_{u_2,v_2} \cap G$, the element $\rho$ belongs to $g_{u_2, v_2}^{-1} \mathcal{U}_a(k) g_{u_2,v_2} \cap G$. This implies that the element $g:=g_{u_2, v_2}g_{u_1, v_1}^{-1}$ satisfies $g \mathrm{u}_a(x,y)g^{-1} \in \mathcal{U}_a(k)$, where $\mathrm{u}_a(x,y) \in \mathcal{U}_a(k) \smallsetminus \mathcal{U}_{2a}(k)$. Thus $g \in \mathcal{B}(k)$ according to Lemma~\ref{lemma normalizacion de borel}.
\end{proof}

\begin{proof}[Proof of Theorem~\ref{main teo unipotent subgroups}]
To start, we check that each $\mathcal{U}(G_{\sigma})$ is maximal among unipotent subgroups.
In order to do this, assume that $\mathcal{U}(G_{\sigma})$ is contained in some unipotent subgroup $U$ of $G$.
It follows from Lemma~\ref{lemma unipotents in G} that $U$ is contained in $g_{u,v}^{-1} \mathcal{U}_a(k) g_{u,v} \cap G$, for some $(u,v) \in H(\ell,k) \cup \lbrace \infty \rbrace$.
Moreover, it follows from the definition of $\mathcal{U}(G_{\sigma})$ that $\mathcal{U}(G_{\sigma})=g_{u_1, v_1}^{-1} \mathcal{U}_a(k) g_{u_1,v_1} \cap G$, for some $(u_1,v_1) \in H(\ell,k) \cup \lbrace \infty \rbrace$. Then, we have
$$ g_{u_1, v_1}^{-1} \mathcal{U}_a(k) g_{u_1,v_1} \cap G \subseteq U \subseteq g_{u, v}^{-1} \mathcal{U}_a(k) g_{u,v} \cap G.$$
Hence $g_{u, v}^{-1} \mathcal{U}_a(k) g_{u,v}= g_{u_1, v_1}^{-1} \mathcal{U}_a(k) g_{u_1,v_1}$, according to Lemma~\ref{lemma cont unip}. Thus, we conclude
$$ g_{u_1, v_1}^{-1} \mathcal{U}_a(k) g_{u_1,v_1} \cap G = U = g_{u, v}^{-1} \mathcal{U}_a(k) g_{u,v} \cap G ,$$ whence the maximality of $\mathcal{U}(G_{\sigma})$ follows.

We are able to prove statement~(1). 
Indeed, since $\mathcal{U}(G_{\sigma}) \subset G_{\sigma}$ is unipotent, it is contained in the group $\mathcal{U}_{\sigma}$ containing all the unipotent element in $G_{\sigma}$.
But, since $\mathcal{U}(G_{\sigma})$ is maximal, we have $\mathcal{U}(G_{\sigma})=\mathcal{U}_{\sigma}$.
Let $\xi=\xi_{\sigma}\in \partial_{\infty}(X)$ be a representative of $\sigma \in \Sigma_0$. Then, the stabilizer $G_{\sigma}= G_{\xi_{\sigma}}$ of $\xi_{\sigma}$ in $G$ is the intersection of $G$ with the stabilizer $\Gamma_{\xi}=\Gamma_{\xi_{\sigma}}$ of $\xi_{\sigma}$ in $\Gamma$.
Thus, we have an injective morphism $G_{\sigma}/\mathcal{U}(G_{\sigma}) \to \Gamma_{\sigma}/\mathcal{U}(\Gamma_{\sigma})$. 
Since, by definition $\mathcal{U}(\Gamma_{\sigma})=U_{\xi_{\sigma}}$, we deduce from Theorem~\ref{Teo ABLL stab}(e) that $G_{\sigma}/\mathcal{U}(G_{\sigma})$ is isomorphic to a subgroup of $\mathbb{F}^{*}$.

Now, we prove statement~(2).
Firstly, let $U$ be a maximal unipotent subgroup in $G$.  It follows from Lemma~\ref{lemma unipotents in G} that $U$ is contained in $g_{u,v}^{-1} \mathcal{U}_a(k) g_{u,v} \cap G$, for some $(u,v) \in H(\ell,k) \cup \lbrace \infty \rbrace$. Since $U$ is maximal, we get $U=g_{u,v}^{-1} \mathcal{U}_a(k) g_{u,v} \cap G$.
Let $\xi=g_{u,v}^{-1} \cdot \xi_{\infty}$ be a visual limit in $\partial_{\infty}(X)$. Then, by definition of the set $\lbrace \xi_{\sigma} \rbrace_{\sigma \in \Sigma_0}$ in Theorem~\ref{main teo unipotent subgroups}, there exists $g \in G$ and $\sigma \in \Sigma_0$ such that $\xi=g \cdot \xi_{\sigma}$. Therefore $U=g \mathcal{U}(G_{\sigma})g^{-1}$. Thus, $U$ and $\mathcal{U}(G_{\sigma})$ belong to the same $G$-conjugacy class.

Secondly, we prove that, given two group $\mathcal{U}(G_{\sigma_1})$ and $\mathcal{U}(G_{\sigma_2})$, with $\sigma_1 \neq \sigma_2$, they are not $G$-conjugated.
Assume that there exists $g \in G$ such that $ g \mathcal{U}(G_{\sigma_1}) g^{-1}= \mathcal{U}(G_{\sigma_2})$.
For each $i \in \lbrace 1,2 \rbrace$, let $\xi_{\sigma_i} \in \partial_{\infty}(X)$ be the visual limit corresponding to $\sigma_i$, $(u_i,v_i)=f^{-1}(\xi_{\sigma_i}) \in H(\ell,k) \cup \lbrace \infty \rbrace$, and $g_{u_i,v_i} \in \mathcal{G}(k)$ such that $\xi_{\sigma_i}=g_{u_i,v_i}^{-1} \cdot \xi_{\infty}$ (c.f.~Corollary~\ref{coro visual limit and H(L,K)}).  
Let us write $\xi=g \cdot \xi_{\sigma_1} \in \partial_{\infty}(X)$. Let $(u,v)\in H(\ell,k) \cup \lbrace \infty \rbrace$ such that $\xi=g_{u,v}^{-1} \cdot \xi_{\infty}$, and write $\mathcal{U}(\xi)=g_{u,v}^{-1} \mathcal{U}_a(k) g_{u,v} \cap G$. Since $g g_{u_1,v_1} \cdot \xi_{\infty}= g \cdot \xi_{\sigma_1}= \xi = g_{u,v}^{-1} \cdot \xi_{\infty}$, it follows from Lemma~\ref{stab visual limit} that $g_{u,v} g g_{u_1,v_1}^{-1} \in \mathcal{B}(k)$. Moreover, since $\mathcal{B}(k)$ stabilizes $\mathcal{U}_a(k)$, we have
$$\mathcal{U}(\xi) = g \mathcal{U}(G_{\sigma_1}) g^{-1}= \mathcal{U}(G_{\sigma_2}).$$
In other words, we have
$$ g_{u,v}^{-1} \mathcal{U}_a(k) g_{u,v} \cap G= g_{u_2,v_2}^{-1} \mathcal{U}_a(k) g_{u_2,v_2} \cap G. $$
Thus, it follows from Lemma~\ref{lemma cont unip} that $g_{u_2,v_2} g^{-1}_{u,v} \in \mathcal{B}(k)$. Then, we deduce from Lemma~\ref{stab visual limit} that $\xi=g^{-1}_{u,v} \cdot \xi_{\infty}=g^{-1}_{u_2,v_2} \cdot \xi_{\infty}=\xi_{\sigma_2}$. In other words, we conclude $\xi_{\sigma_2}= g \cdot \xi_{\sigma_1}$, which contradicts the definition of the set $\lbrace G_{\sigma} \rbrace_{\sigma \in \Sigma_0}$.
Thus, statement~(2) follows.

Finally, in order to prove statement~(3), we assume that $\mathbb{F}=\mathbb{F}_{p^r}$ and $G$ does not have $p'$-torsion.
Let $g \in G_{\sigma}$. 
Since $\mathbb{F}^{*}$ has order $p^r-1$, we have $g^{p^r-1}\in \mathcal{U}(G_{\sigma})$.
In other words, $g^{p^r-1}= g_{u,v}^{-1} \mathrm{u}_a(x,y) g_{u,v}$, for some pair $(x,y) \in H(\ell,k)$.
Thus, the element $\tau:= g_{u,v} g^{p^r-1} g_{u,v}^{-1}$ satisfies $\tau^p= \mathrm{u}_a(px,z)= \mathrm{u}_a(0,z)$, for some $z \in H^0(\ell,k)$.
This implies that $\tau^{p^2}= \mathrm{u}_a(0,z)^p=\mathrm{u}_a(0,pz)=\mathrm{id}$.
In particular, the order $|g|$ of $g$ divides $p^2\left(p^r-1\right)$.
Since torsion elements in $G$ have order a power of $p$, we deduce that $|g|$ divides $p^2$.
Let $a,b \in \mathbb{Z}$ such that $ap^2+b(p^r-1)=1$.
Then $g=(g^{p^2})^a (g^{p^r-1})^b=(g^{p^r-1})^b$ belongs to $\mathcal{U}(G_{\sigma})$, whence the result follows.
\end{proof}

\subsection{On some consequences}\label{subsection consequences of main teo max unip sub}

The following results are some interesting consequences of Theorem \ref{main teo unipotent subgroups}.

\begin{corollary}\label{coro number of max unip sub for GA}
Assume that $\mathbb{F}$ is perfect. Then, there exists a bijection between the conjugacy classes of maximal unipotent subgroups in $\Gamma$ and $\mathrm{Pic}(B)$.
\end{corollary}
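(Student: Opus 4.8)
The plan is to combine Theorem~\ref{main teo unipotent subgroups}(2) applied to $G=\Gamma$ with Theorem~\ref{Teo ABLL quotient}. First I would recall that, by Theorem~\ref{main teo unipotent subgroups}(2), the set $\mathfrak{U}=\lbrace \mathcal{U}(\Gamma_{\sigma}): \sigma \in \Sigma_0\rbrace$ is a representative system for the $\Gamma$-conjugacy classes of maximal unipotent subgroups of $\Gamma$, where $\lbrace \xi_{\sigma}\rbrace_{\sigma\in\Sigma_0}$ is a set of representatives for the $\Gamma$-orbits in $\partial_{\infty}(X)$. Hence the conjugacy classes of maximal unipotent subgroups of $\Gamma$ are naturally parametrized by the orbit set $\Gamma\backslash\partial_{\infty}(X)$.

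The second ingredient is Theorem~\ref{Teo ABLL quotient}, which asserts precisely that $\Gamma\backslash\partial_{\infty}(X)$ is in bijection with $\mathrm{Pic}(B)$ (the Picard group of the Dedekind ring $B$, the integral closure of $A$ in $\ell$). So I would simply compose the two bijections: conjugacy classes of maximal unipotent subgroups of $\Gamma$ $\longleftrightarrow$ $\Sigma_0 = \Gamma\backslash\partial_{\infty}(X)$ $\longleftrightarrow$ $\mathrm{Pic}(B)$. One should double-check that Theorem~\ref{main teo unipotent subgroups} applies with $G=\Gamma$: $\Gamma$ is a finite index subgroup of itself, and $\mathbb{F}$ is assumed perfect in the statement of this corollary, so the hypotheses are met. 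The only mild subtlety is that Theorem~\ref{main teo unipotent subgroups}(2) guarantees that the $\mathcal{U}(\Gamma_{\sigma})$ for distinct $\sigma\in\Sigma_0$ are pairwise non-conjugate and that every maximal unipotent subgroup is conjugate to one of them; this is exactly an injective-and-surjective statement giving a bijection $\Sigma_0 \to \lbrace\text{conj.\ classes}\rbrace$, which is what is needed.

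There is essentially no obstacle here: the corollary is a direct concatenation of two already-established bijections. If anything, the one point to state carefully is that the indexing set $\Sigma_0$ in Theorem~\ref{main teo unipotent subgroups} \emph{is} $\Gamma\backslash\partial_{\infty}(X)$ by construction, so Theorem~\ref{Teo ABLL quotient} can be invoked verbatim. I would write the proof in two or three sentences, citing Theorem~\ref{main teo unipotent subgroups}(2) for the identification with $\Gamma\backslash\partial_{\infty}(X)$ and Theorem~\ref{Teo ABLL quotient} for the identification of that orbit set with $\mathrm{Pic}(B)$.
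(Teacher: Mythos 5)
Your proposal is correct and coincides with the paper's own argument: the paper likewise invokes Theorem~\ref{main teo unipotent subgroups} to identify the conjugacy classes of maximal unipotent subgroups of $\Gamma$ with the $\Gamma$-orbits in $\partial_{\infty}(X)$, and then cites Theorem~\ref{Teo ABLL quotient} for the bijection of that orbit set with $\mathrm{Pic}(B)$. Nothing further is needed.
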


\begin{proof}
It follows from Theorem~\ref{main teo unipotent subgroups} that there exists a bijection between the conjugacy classes of maximal unipotent subgroups in $\Gamma$ and the $\Gamma$-orbits in $\partial_{\infty}(X)$. Then, the result directly follows from Theorem~\ref{Teo ABLL quotient}.
\end{proof}

\begin{corollary}\label{coro max unip sub A principal}
Assume that $\mathbb{F}$ is perfect. Then, the function ring $B$ is a principal ideal domain if and only if each unipotent subgroup of $\Gamma$ is contained in a $\Gamma$-conjugated of $\mathcal{U}_a(k)\cap \Gamma$.
\end{corollary}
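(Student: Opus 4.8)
The plan is to combine Corollary~\ref{coro number of max unip sub for GA} with the observation that the group $\mathcal{U}_a(k) \cap \Gamma$ is precisely the representative of the conjugacy class associated to the trivial element of $\mathrm{Pic}(B)$. More precisely, taking $(u,v) = \infty$ in Theorem~\ref{main teo unipotent subgroups} gives $g_{\infty} = \mathrm{id}$, so $\mathcal{U}(\Gamma_{\xi_\infty}) = \mathcal{U}_a(k) \cap \Gamma$, and the visual limit $\xi_\infty$ corresponds—under the bijection $\Gamma\backslash\partial_\infty(X) \cong \mathrm{Pic}(B)$ of Theorem~\ref{Teo ABLL quotient}—to the trivial ideal class (this is the class of the ``standard'' cusp, the one stabilized by $\mathcal{B}(A)$). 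First I would record this identification explicitly, checking against the construction of the bijection in \cite{Arenas} that the orbit of $\xi_\infty$ indeed maps to $[\,B\,] \in \mathrm{Pic}(B)$.

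Next I would argue both implications. Suppose $B$ is a principal ideal domain, so $\mathrm{Pic}(B)$ is trivial. By Corollary~\ref{coro number of max unip sub for GA} there is exactly one conjugacy class of maximal unipotent subgroups in $\Gamma$, and by the previous paragraph a representative is $\mathcal{U}_a(k) \cap \Gamma$. Hence every maximal unipotent subgroup of $\Gamma$ is $\Gamma$-conjugate to $\mathcal{U}_a(k)\cap\Gamma$; since every unipotent subgroup of $\Gamma$ is contained in a maximal one (which exists by Lemma~\ref{lemma unipotents in G} together with the maximality argument in the proof of Theorem~\ref{main teo unipotent subgroups}, intersecting with $\Gamma$), we get the stated containment. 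Conversely, if every unipotent subgroup of $\Gamma$ lies in a $\Gamma$-conjugate of $\mathcal{U}_a(k)\cap\Gamma$, then in particular every maximal unipotent subgroup is $\Gamma$-conjugate to $\mathcal{U}_a(k)\cap\Gamma$ (a maximal unipotent subgroup contained in a conjugate of $\mathcal{U}_a(k)\cap\Gamma$ must, by maximality and Lemma~\ref{lemma cont unip}, equal it up to conjugacy), so there is a single conjugacy class, and Corollary~\ref{coro number of max unip sub for GA} forces $\mathrm{Pic}(B)$ to be trivial, i.e.\ $B$ is a PID.

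The only genuine subtlety—and the step I expect to require the most care—is the identification of the orbit of $\xi_\infty$ with the trivial class in $\mathrm{Pic}(B)$ under the bijection of Theorem~\ref{Teo ABLL quotient}; the bijection there is asserted but its normalization must be traced through \cite{Arenas} to confirm it sends the cusp fixed by the standard Borel $\mathcal{B}$ to $[\,B\,]$. Everything else is a formal consequence of results already established: Corollary~\ref{coro number of max unip sub for GA} provides the counting, Lemma~\ref{lemma unipotents in G} and Lemma~\ref{lemma cont unip} reduce arbitrary unipotent subgroups to the finitely many maximal ones, and the case $(u,v)=\infty$ of Theorem~\ref{main teo unipotent subgroups} pins down $\mathcal{U}_a(k)\cap\Gamma$ as one of the distinguished representatives.
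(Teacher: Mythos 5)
Your proposal is correct and follows essentially the same route as the paper: both reduce the statement to counting conjugacy classes of maximal unipotent subgroups via the bijection with $\mathrm{Pic}(B)$ (Theorem~\ref{Teo ABLL quotient} together with Theorem~\ref{main teo unipotent subgroups}) and then use Lemma~\ref{lemma unipotents in G} to pass from arbitrary unipotent subgroups to maximal ones. The only real divergence is the step you flag as the main subtlety---tracing the normalization of the bijection to check that the orbit of $\xi_{\infty}$ maps to the trivial class of $\mathrm{Pic}(B)$---which is in fact unnecessary: once $\mathrm{Pic}(B)$ is trivial there is only one $\Gamma$-orbit in $\partial_{\infty}(X)$, so $\xi_{\infty}$ is automatically a representative of it and $\mathcal{U}_a(k)\cap\Gamma$ represents the unique conjugacy class, while the converse direction likewise only uses that there is a single conjugacy class, so neither implication requires knowing which ideal class the standard cusp corresponds to.
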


\begin{proof}
Since $B$ is a Dedekind domain, $\mathrm{Pic}(B)$ is trivial exactly when $B$ is a principal ideal domain. Then, Theorem~\ref{Teo ABLL quotient} shows that $\mathrm{Pic}(B)$ is trivial if and only if $\Gamma$ acts transitively on $\partial_{\infty}(X)$. Thus, it follows from Theorem~\ref{main teo unipotent subgroups} that $\mathrm{Pic}(B)$ is trivial precisely when $\Gamma$ has a unique class of maximal unipotent subgroups. Moreover, if $B$ is principal, then, we can chose $\xi_{\infty}$ as a representative of the unique $\Gamma$-orbit in $\partial_{\infty}(X)$. Then, Theorem~\ref{main teo unipotent subgroups} implies that $\mathcal{U}_a(k)\cap \Gamma$ represent the unique conjugacy class of maximal unipotent subgroups in $\Gamma$. Thus, since any unipotent subgroup of $\Gamma$ is contained in a maximal unipotent subgroup according to \ref{lemma unipotents in G}, the result follows.
\end{proof}


\begin{corollary}\label{coro max unip sub in sub of GA}
Assume that $\mathbb{F}$ is finite. Then $G$ has at most $[\Gamma:G] \cdot \sharp \mathrm{Pic}(B) < \infty$ conjugacy classes of maximal unipotent subgroups. 
\end{corollary}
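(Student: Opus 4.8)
The plan is to translate the statement into a counting problem on the visual limit $\partial_{\infty}(X)$ and then compare $G$-orbits with $\Gamma$-orbits. First I would invoke Theorem~\ref{main teo unipotent subgroups}(2): the conjugacy classes of maximal unipotent subgroups of $G$ are in bijection with the set of $G$-orbits $G \backslash \partial_{\infty}(X)$, since every maximal unipotent subgroup of $G$ is $G$-conjugate to exactly one of the groups $\mathcal{U}(G_{\sigma})$, and the indices $\sigma \in \Sigma_0$ run precisely over $G \backslash \partial_{\infty}(X)$. So it suffices to bound $\sharp(G \backslash \partial_{\infty}(X))$.

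Next I would show that each single $\Gamma$-orbit on $\partial_{\infty}(X)$ breaks into at most $[\Gamma:G]$ $G$-orbits. Fixing $\xi \in \partial_{\infty}(X)$, the $G$-orbits contained in $\Gamma \cdot \xi$ are in bijection with the double cosets $G \backslash \Gamma / \mathrm{Stab}_{\Gamma}(\xi)$, and the canonical surjection $G \backslash \Gamma \to G \backslash \Gamma / \mathrm{Stab}_{\Gamma}(\xi)$ shows their number is at most $\sharp(G \backslash \Gamma) = [\Gamma:G]$. Then, by Theorem~\ref{Teo ABLL quotient}, the set $\Gamma \backslash \partial_{\infty}(X)$ is in bijection with $\mathrm{Pic}(B)$, which is finite because $\mathbb{F}$ is finite (the Picard group of an affine curve over a finite field is finite; this is also visible from the finiteness of the graph $Y$ in Theorem~\ref{Teo ABLL quotient}). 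Summing the per-orbit bound over a system of representatives of the $\sharp \mathrm{Pic}(B)$ orbits in $\Gamma \backslash \partial_{\infty}(X)$ yields $\sharp(G \backslash \partial_{\infty}(X)) \leq [\Gamma:G] \cdot \sharp \mathrm{Pic}(B)$, and since $[\Gamma:G]$ is finite by hypothesis this quantity is finite.

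There is essentially no real obstacle: the argument is the standard refinement of an orbit decomposition under passage to a finite-index subgroup, fed by the two inputs already established (Theorem~\ref{main teo unipotent subgroups} identifying conjugacy classes of maximal unipotent subgroups with $G \backslash \partial_{\infty}(X)$, and Theorem~\ref{Teo ABLL quotient} describing $\Gamma \backslash \partial_{\infty}(X)$). The only point that deserves an explicit word is the finiteness of $\mathrm{Pic}(B)$, which uses that $\mathbb{F}$ is finite; everything else is bookkeeping with double cosets.
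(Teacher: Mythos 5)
Your proposal is correct and follows essentially the same route as the paper: identify conjugacy classes of maximal unipotent subgroups with $G$-orbits in $\partial_{\infty}(X)$ via Theorem~\ref{main teo unipotent subgroups}, bound the number of $G$-orbits inside each $\Gamma$-orbit by $[\Gamma:G]$, and conclude with the bijection $\Gamma \backslash \partial_{\infty}(X) \cong \mathrm{Pic}(B)$ from Theorem~\ref{Teo ABLL quotient} together with the finiteness of $\mathrm{Pic}(B)$ over a finite field. Your explicit double-coset justification simply spells out the step the paper dismisses as ``not hard to see.''
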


\begin{proof}
It is not hard to see that the number of $G$-orbits in $\partial_{\infty}(X)$ is lees or equal than the number of $\Gamma$-orbits in $\partial_{\infty}(X)$ multiplied by the index $[\Gamma:G]$. Thus, $G$ has at most $[\Gamma:G] \cdot \sharp \mathrm{Pic}(B)$ conjugacy classes of maximal unipotent subgroups, according to Theorem~\ref{Teo ABLL quotient}. Since $\mathbb{F}$ is finite, $\mathrm{Pic}(B)$ is finite according to Weyl theorem (c.f.~\cite[Ch. II, \S 2.2]{S}).
\end{proof}

The next result follows from the description of $G$ as the amalgamated free product defined from $G \backslash X$ by using the Bass-Serre theory (c.f.~\cite[Chapter I, \S 5]{S} and \cite[Chapter II, \S 2.5]{S}).

\begin{corollary}
Assume that $\mathbb{F}=\mathbb{F}_{p^r}$.
Let $G$ be a finite index subgroup of $\Gamma$ without $p'$-torsion. Then, $G$ is the free product of a finite generated group $K$ with a representative system of its maximal unipotent subgroups $\lbrace G_{\sigma} \rbrace_{\sigma \in \Sigma_0}$, amalgamated along certain groups $K_{\sigma}$, according to certain injections $K_{\sigma} \hookrightarrow G_\sigma$ and $K_{\sigma} \hookrightarrow K$.
\end{corollary}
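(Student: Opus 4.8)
The plan is to read off the presentation from Bass--Serre theory applied to the action of $G$ on the tree $X$, and then to collapse the cuspidal rays of the quotient graph of groups so as to isolate the subgroups $G_\sigma$. First I would record the inputs: since $G\subseteq\mathcal G(k)$ it acts on $X$ without inversions (\S\ref{subsection action of arit groups}), and since $\mathbb F$ is finite each $\mathrm{Stab}_{\mathcal G(k_P)}(v)$ is compact while $G$ is discrete, so every vertex and edge stabilizer of the $G$-action on $X$ is finite. Hence, by Bass--Serre theory (\cite[Chapter I, \S 5]{S} and \cite[Chapter II, \S 2.5]{S}), $G$ is the fundamental group of the graph of groups on $\bar X:=G\backslash X$ whose vertex and edge groups are the corresponding $G$-stabilizers.

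Next I would describe $\bar X$. Applying Theorem~\ref{Teo ABLL quotient} to $\Gamma$ together with the fact that $\bar X\to\Gamma\backslash X$ is finite-to-one (alternatively, using the $G$-unstable subgraph $X_\infty$ of Definition~\ref{defi of unstable} with Lemma~\ref{lemma H_1} and Corollary~\ref{coro finitud}), one obtains a decomposition $\bar X=\bar Y\cup\bigsqcup_{\sigma\in\Sigma_0}\mathfrak{r}(\sigma)$ with $\bar Y$ a finite connected subgraph and each $\mathfrak{r}(\sigma)$ a ray meeting $\bar Y$ only in its initial vertex; here the cusps are indexed exactly by $\Sigma_0=G\backslash\partial_\infty(X)$ since the map $\Theta'$ of Lemma~\ref{lemma H_1} is $G$-equivariant (after, if needed, replacing each component of $G\backslash X_\infty$ by a subray). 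Let $v_0^\sigma,v_1^\sigma,\dots$ be the vertices of a lift of $\mathfrak{r}(\sigma)$ pointing at $\xi_\sigma$, with $v_0^\sigma\in\bar Y$, and let $e_i^\sigma$ join $v_i^\sigma$ to $v_{i+1}^\sigma$. Then Theorem~\ref{Teo ABLL stab}(b),(c), intersected with $G$, gives $\bigcup_i\mathrm{Stab}_G(v_i^\sigma)=\mathrm{Stab}_G(\xi_\sigma)=G_\sigma$, while, by the same argument as in the proof of Proposition~\ref{prop char eu} (using that $G$ preserves vertex types), the edge group of $e_i^\sigma$ equals the vertex group $\mathrm{Stab}_G(v_i^\sigma)$, so the chain $\mathrm{Stab}_G(v_0^\sigma)\subseteq\mathrm{Stab}_G(v_1^\sigma)\subseteq\cdots$ is increasing.

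Then I would collapse the rays. In a graph of groups, removing an edge whose group maps isomorphically onto one of its endpoint groups (and identifying that endpoint with the other) leaves the fundamental group unchanged; carrying this out along $e_1^\sigma,e_2^\sigma,\dots$, that is, passing to the direct limit $\varinjlim_i\mathrm{Stab}_G(v_i^\sigma)=\bigcup_i\mathrm{Stab}_G(v_i^\sigma)=G_\sigma$, collapses the ray $\mathfrak{r}(\sigma)$ onto a single vertex carrying the group $G_\sigma$, attached to $v_0^\sigma\in\bar Y$ along the edge $e_0^\sigma$ of group $K_\sigma:=\mathrm{Stab}_G(v_0^\sigma)$. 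The outcome is a finite graph of groups: the restriction of the above graph of groups to $\bar Y$, call it the core, together with, for each $\sigma\in\Sigma_0$, one pendant edge of group $K_\sigma$ from $\bar Y$ to a vertex of group $G_\sigma$. By Bass--Serre theory its fundamental group is the free product, amalgamated along the $K_\sigma$, of the fundamental group $K$ of the core with the groups $G_\sigma$; the two required injections are $K_\sigma=\mathrm{Stab}_G(v_0^\sigma)\hookrightarrow K$ (a vertex group injects into the fundamental group) and $K_\sigma\hookrightarrow G_\sigma$ (the inclusion $\mathrm{Stab}_G(v_0^\sigma)\subseteq\bigcup_i\mathrm{Stab}_G(v_i^\sigma)$). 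Since the collapses do not change $\pi_1$, this graph of groups has fundamental group $G$, giving the desired presentation. Finally, $K$ is finitely generated because $\bar Y$ is finite and all of its vertex and edge groups are finite, so the fundamental group of the core is finitely generated (indeed finitely presented); and by Theorem~\ref{main teo unipotent subgroups}(2)--(3), $\{G_\sigma\}_{\sigma\in\Sigma_0}$ is a representative system of the conjugacy classes of maximal unipotent subgroups of $G$.

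The hard part will be the two ``behaviour at infinity'' points: obtaining the core-plus-rays description of $\bar X$ for $G$ rather than $\Gamma$, with the cusps matched to $\Sigma_0$ and the cusp groups identified with the $G_\sigma$; and justifying the infinite collapse, i.e.\ that the fundamental group of a ray of groups whose edge groups equal the lower vertex groups is the direct limit of the vertex groups. The remaining ingredients --- no inversions, finiteness of stabilizers, and the equality ``edge group $=$ lower vertex group'' along cusps --- are already available from \S\ref{section Euler-Poincare} and Theorem~\ref{Teo ABLL stab}.
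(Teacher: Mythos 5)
Your proposal is correct and follows essentially the same route as the paper's own argument: decompose $G\backslash X$ into a finite core plus cusp rays, apply Bass--Serre theory to the resulting graph of groups, identify the contribution of each cusp ray with $G_\sigma=\bigcup_i\mathrm{Stab}_G(v_i^\sigma)$ via Theorem~\ref{Teo ABLL stab}(b),(c), and assemble the pieces by a Seifert--van Kampen/collapsing argument. The two points you flag as delicate (the core-plus-rays description for $G$ rather than $\Gamma$, and the direct-limit collapse along a ray whose edge groups equal the lower vertex groups) are exactly the ones the paper also relies on, and your justifications of them are sound.
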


\section{Relative Homology}\label{Rel. Homology}
In all this section we assume that $\mathbb{F}=\mathbb{F}_{p^r}$.
We start by recalling the definition of the relative homology of a group $H$ respect to a non-empty family of subgroups $\lbrace H_{\sigma} \rbrace_{\sigma \in \Sigma}$. Indeed, let $R$ be the kernel of the augmentation $\mathbb{Z}$-morphism $\coprod_{\sigma \in \Sigma} \mathbb{Z}[H/H_{\sigma}] \to \mathbb{Z}$, defined by $\varepsilon(\delta)=1$, for all $\delta \in H/H_{\sigma}$ and all $\sigma \in \Sigma$. Then, we get the exact sequence:
\begin{equation}\label{eq relative hom}
 0 \to R \to \coprod_{\sigma \in \Sigma} \mathbb{Z}[H/H_{\sigma}] \to \mathbb{Z} \to 0.
\end{equation} 
Let $M$ be a $H$-module, and, for each $i \in \mathbb{Z}_{\geq 1}$, let us write
$$H_i(H \text{ mod }H_{\sigma},M):= \mathrm{Tor}_{i-1}^{\mathbb{Z}[H]}(R,M)= H_{i-1}(H, R \otimes M). $$
This groups are called the $M$-valued homology groups of $H$ relative to the family of subgroups $\lbrace H_{\sigma} \rbrace_{\sigma \in \Sigma}$.

\begin{theorem}\label{teo relative homology}
Let $G$ be a finite index subgroup of $\Gamma$ without $p'$-torsion.
Then, the relative homology groups of $G$ modulo the family $\lbrace G_{\sigma} \rbrace_{\sigma \in \Sigma_0}$ of unipotent subgroups defined in Theorem~\ref{main teo unipotent subgroups} satisfy:
\begin{itemize}
    \item[(a)] $H_{i}(G \text{ mod } G_{\sigma},M)= \lbrace 0 \rbrace$, for all $i \neq 1$, and
    \item[(b)] If $M$ es finitely generated over $\mathbb{Z}$, then $H_{1}(G \text{ mod } G_{\sigma},M) \cong M^{-\chi(G)}$.
\end{itemize}
\end{theorem}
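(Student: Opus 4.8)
The plan is to connect the relative homology groups $H_i(G \text{ mod } G_\sigma, M)$ directly to the Steinberg module $\mathrm{St}$ studied in Section~\ref{section Euler-Poincare}, using the identification of $\partial_\infty(X)$ with $H(\ell,k) \cup \{\infty\}$ and the description of the maximal unipotent subgroups from Theorem~\ref{main teo unipotent subgroups}. First I would observe that, by Theorem~\ref{main teo unipotent subgroups}, the family $\{G_\sigma\}_{\sigma \in \Sigma_0}$ is (a representative system for) the $G$-stabilizers of the $G$-orbits on $\partial_\infty(X)$, so there is a natural $G$-equivariant bijection $\coprod_{\sigma \in \Sigma_0} G/G_\sigma \xrightarrow{\sim} \partial_\infty(X) \cong H(\ell,k)\cup\{\infty\}$. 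Consequently the defining module $R = \ker\!\big(\coprod_\sigma \mathbb{Z}[G/G_\sigma] \to \mathbb{Z}\big)$ is canonically isomorphic, as a $\mathbb{Z}[G]$-module, to $\mathrm{St} = \ker\!\big(\mathbb{Z}[H(\ell,k)\cup\{\infty\}] \to \mathbb{Z}\big)$. This is the conceptual heart of the argument and the step I expect to require the most care: one must check that the augmentation maps match up under the bijection, and that the bijection is genuinely $G$-equivariant, which relies on part~(2) of Theorem~\ref{main teo unipotent subgroups} (distinct $\sigma$ give non-conjugate, hence genuinely distinct orbit-data) together with Corollary~\ref{coro visual limit and H(L,K)}.

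Granting $R \cong \mathrm{St}$, the computation becomes formal. By definition $H_i(G \text{ mod } G_\sigma, M) = \mathrm{Tor}^{\mathbb{Z}[G]}_{i-1}(R,M) = \mathrm{Tor}^{\mathbb{Z}[G]}_{i-1}(\mathrm{St},M)$. Theorem~\ref{Main teo steinberg} asserts that $\mathrm{St}$ is a finitely generated \emph{projective} $\mathbb{Z}[G]$-module, so $\mathrm{Tor}^{\mathbb{Z}[G]}_j(\mathrm{St},M) = 0$ for all $j \geq 1$, which gives $H_i(G \text{ mod } G_\sigma, M) = 0$ for all $i \neq 1$; this is part~(a). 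For part~(b), I would use the stably-free resolution from Theorem~\ref{Main teo steinberg}: there are finitely generated free $\mathbb{Z}[G]$-modules $L_0, L_1$ of ranks $l_0, l_1$ with $\mathrm{St} \oplus L_0 \cong L_1$ and $\chi(G) = l_0 - l_1$. Tensoring with $M$ over $\mathbb{Z}[G]$ (which is exact on the split exact sequence, hence also additive), one gets $(\mathrm{St}\otimes_{\mathbb{Z}[G]} M) \oplus M^{l_0} \cong M^{l_1}$, and since $\mathrm{St}\otimes_{\mathbb{Z}[G]} M = H_1(G \text{ mod } G_\sigma, M)$, this reads $H_1(G \text{ mod } G_\sigma, M) \oplus M^{l_0} \cong M^{l_1}$. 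When $M$ is finitely generated over $\mathbb{Z}$, cancellation of the free $\mathbb{Z}$-summand $M^{l_0}$ — valid since finitely generated abelian groups satisfy cancellation — yields $H_1(G \text{ mod } G_\sigma, M) \cong M^{l_1 - l_0} = M^{-\chi(G)}$, as claimed.

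One subtlety I would address explicitly: the cancellation step requires knowing that $\mathrm{St}\otimes_{\mathbb{Z}[G]} M$ is itself finitely generated over $\mathbb{Z}$ (so that the ambient category is one where cancellation holds); this follows because $\mathrm{St}$ is finitely generated over $\mathbb{Z}[G]$ and $M$ is finitely generated over $\mathbb{Z}$, so $\mathrm{St}\otimes_{\mathbb{Z}[G]} M$ is a quotient of $\mathbb{Z}[G]^n \otimes_{\mathbb{Z}[G]} M \cong M^n$. With that in hand the isomorphism $M^{l_0} \oplus H_1 \cong M^{l_1}$ takes place entirely within finitely generated abelian groups, where the rank and the (finite) torsion subgroup are additive invariants, forcing $H_1 \cong M^{l_1-l_0}$. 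Finally, I note that Theorem~\ref{teo relative homology} together with the long exact sequence in $\mathrm{Tor}$ associated to \eqref{eq relative hom} recovers the consequences stated in Theorem~\ref{main teo Hom}: vanishing of $H_i(G\text{ mod }G_\sigma,M)$ for $i\neq 1$ gives $H_i(G,M)\cong\coprod_\sigma H_i(G_\sigma,M)$ for $i>1$, and the $i=1$ portion of that long exact sequence is the displayed four-term exact sequence.
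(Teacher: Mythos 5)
Your proposal is correct and follows essentially the same route as the paper: identify $R$ with $\mathrm{St}$ via the orbit--stabilizer decomposition $\mathbb{Z}[H(\ell,k)\cup\{\infty\}]=\coprod_{\sigma}\mathbb{Z}[G/G_{\sigma}]$, use projectivity of $\mathrm{St}$ from Theorem~\ref{Main teo steinberg} to kill $\mathrm{Tor}_{i-1}$ for $i\neq 1$, and tensor the stable isomorphism $\mathrm{St}\oplus L_0\cong L_1$ with $M$ followed by cancellation of finitely generated abelian groups (the paper's ``invariant factor argument''). Your added remarks on $G$-equivariance of the bijection and on why cancellation is legitimate only make explicit what the paper leaves implicit.
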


\begin{proof}
It follows from the orbit-stabilizer relation that $$\mathbb{Z}[H(\ell,k) \cup \lbrace \infty \rbrace] = \coprod_{\sigma \in \Sigma_0} \mathbb{Z}[G/G_{\sigma}].$$
Then, the module $R$ defined in Equation~\eqref{eq relative hom} is none other than the Steinberg module defined in Equation~\eqref{Eq ex sequence St}. Thus, for each $G$-module $M$, we have
\begin{equation}
    H_i(G \text{ mod } G_{\sigma}, M)= \mathrm{Tor}_{i-1}^{\mathbb{Z}[G]}(\mathrm{St},M).
\end{equation}
Now, recall that, for all projective module $P$, we have $\mathrm{Tor}_{i-1}(P,M)=0$, for all $i \neq 1$. Then, assertion (a) follows from from Theorem~\ref{Main teo steinberg}. Now, we just to have to prove (b). To do so, recall that 
$$H_1(G \text{ mod } G_{\sigma},M)= \mathrm{Tor}_0^{\mathbb{Z}[G]}(\mathrm{St}, M)= \mathrm{St}  \otimes_{\mathbb{Z}[G]} M.$$
Then, it follows from Theorem~\ref{Main teo steinberg} that
$$H_1(G \text{ mod } G_{\sigma},M) \oplus M^{l_0} \cong M^{l_1}, \text{ with } \chi(G)=l_0-l_1. $$
Since $M$ is finitely generated, it follows from an invariant factor argument that $H_1(G \text{ mod } G_{\sigma},M)$ is isomorphic to $M^{l_1-l_0}= M^{-\chi(G)}$.
\end{proof}

\begin{corollary}\label{coro homology of G}
For each $i \in \mathbb{Z}_{>1} $ we have $H_{i}(G,M) \cong \coprod_{\sigma \in \Sigma_0} H_i(G_{\sigma},M)$. For $i=1$, the following sequence is exact:
$$
0 \to \coprod_{\sigma \in \Sigma_0} H_1(G_{\sigma},M) \to H_1 (G,M) \to H_1(G \text{ mod } G_{\sigma},M),$$
where $H_1(G \text{ mod } G_{\sigma},M)$ can be replaced by $M^{-\chi(G)}$ when $M$ is $\mathbb{Z}$-finitely generated.
\end{corollary}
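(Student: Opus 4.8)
The plan is to run the long exact sequence of $\mathrm{Tor}^{\mathbb{Z}[G]}_{\bullet}(-,M)$ attached to the defining short exact sequence \eqref{eq relative hom} and then insert the vanishing and identification supplied by Theorem~\ref{teo relative homology}. Recall, as in the proof of Theorem~\ref{teo relative homology}, that the orbit--stabilizer relation identifies $\coprod_{\sigma\in\Sigma_0}\mathbb{Z}[G/G_{\sigma}]$ with $\mathbb{Z}[H(\ell,k)\cup\lbrace\infty\rbrace]$ and $R$ with $\mathrm{St}$, and that $\Sigma_0$ is finite here since $\mathbb{F}$ is finite. Applying $\mathrm{Tor}^{\mathbb{Z}[G]}_{\bullet}(-,M)$ to \eqref{eq relative hom} yields, for every $i$, the exact sequence
$$\cdots\to\mathrm{Tor}^{\mathbb{Z}[G]}_i(R,M)\to\mathrm{Tor}^{\mathbb{Z}[G]}_i\Bigl(\coprod_{\sigma}\mathbb{Z}[G/G_{\sigma}],M\Bigr)\to\mathrm{Tor}^{\mathbb{Z}[G]}_i(\mathbb{Z},M)\to\mathrm{Tor}^{\mathbb{Z}[G]}_{i-1}(R,M)\to\cdots.$$

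Next I would identify the three types of terms. By definition $\mathrm{Tor}^{\mathbb{Z}[G]}_{i-1}(R,M)=H_i(G\text{ mod }G_{\sigma},M)$ and $\mathrm{Tor}^{\mathbb{Z}[G]}_i(\mathbb{Z},M)=H_i(G,M)$. For the middle term, $\mathrm{Tor}$ commutes with the coproduct in its first argument, so it equals $\coprod_{\sigma}\mathrm{Tor}^{\mathbb{Z}[G]}_i(\mathbb{Z}[G/G_{\sigma}],M)$; since $\mathbb{Z}[G/G_{\sigma}]\cong\mathbb{Z}[G]\otimes_{\mathbb{Z}[G_{\sigma}]}\mathbb{Z}$ is an induced module, Shapiro's lemma gives $\mathrm{Tor}^{\mathbb{Z}[G]}_i(\mathbb{Z}[G/G_{\sigma}],M)\cong\mathrm{Tor}^{\mathbb{Z}[G_{\sigma}]}_i(\mathbb{Z},M)=H_i(G_{\sigma},M)$. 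Substituting, the long exact sequence becomes
$$\cdots\to H_{i+1}(G\text{ mod }G_{\sigma},M)\to\coprod_{\sigma\in\Sigma_0}H_i(G_{\sigma},M)\to H_i(G,M)\to H_i(G\text{ mod }G_{\sigma},M)\to\coprod_{\sigma\in\Sigma_0}H_{i-1}(G_{\sigma},M)\to\cdots.$$

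Finally I would specialise the index $i$. For $i>1$, Theorem~\ref{teo relative homology}(a) forces both $H_{i+1}(G\text{ mod }G_{\sigma},M)=0$ and $H_i(G\text{ mod }G_{\sigma},M)=0$, so the arrow $\coprod_{\sigma}H_i(G_{\sigma},M)\to H_i(G,M)$ is an isomorphism, which is the first assertion. For $i=1$ only $H_2(G\text{ mod }G_{\sigma},M)=0$ is available; truncating the long exact sequence at the term $H_1(G\text{ mod }G_{\sigma},M)$ leaves exactly
$$0\to\coprod_{\sigma\in\Sigma_0}H_1(G_{\sigma},M)\to H_1(G,M)\to H_1(G\text{ mod }G_{\sigma},M),$$
and when $M$ is $\mathbb{Z}$-finitely generated Theorem~\ref{teo relative homology}(b) rewrites the last term as $M^{-\chi(G)}$. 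The argument is purely formal; the only point that deserves care is the bookkeeping behind Shapiro's lemma, the commutation of $\mathrm{Tor}^{\mathbb{Z}[G]}_{\bullet}(-,M)$ with the coproduct over $\Sigma_0$, and the standard identification of $\mathrm{Tor}$ over the group ring with group homology, all of which are routine since $\mathbb{Z}[G/G_{\sigma}]$ is genuinely induced from $G_{\sigma}$.
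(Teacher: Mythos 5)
Your argument is correct and is essentially the paper's own proof: the paper likewise applies the $\mathrm{Tor}$ long exact sequence of the defining short exact sequence, uses Shapiro's lemma to identify the middle terms with $\coprod_{\sigma}H_i(G_{\sigma},M)$, and then invokes Theorem~\ref{teo relative homology} to kill the relative terms for $i\neq 1$ and to rewrite $H_1(G\text{ mod }G_{\sigma},M)$ as $M^{-\chi(G)}$. You simply spell out the bookkeeping that the paper leaves implicit.
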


\begin{proof}
The $\mathrm{Tor}$-exact sequence, together with Shapiro's Lemma, give the exact sequence:
$$\cdots \to H_{i+1}( G \text{ mod } G_{\sigma}, M) \to 
\coprod_{\sigma \in \Sigma_0} H_i(G_{\sigma},M) \to
H_i (G,M) \to \cdots .$$
Then, result follows from Theorem~\ref{teo relative homology}.
\end{proof}

For any group $F$, let us denote by $F^{\mathrm{ab}}$ its largest abelian quotient.
As in Lemma~\ref{lemma fix points in V}, given $(u,v) \in H(\ell,k) \cup \lbrace \infty \rbrace$, we write $\tilde{G}_{u,v}=(g_{u,v}^{-1}\mathcal{U}_{a}(k) g_{u,v} \cap G) / (g_{u,v}^{-1}\mathcal{U}_{2a}(k) g_{u,v} \cap G)$.
As in Theorem~\ref{main teo unipotent subgroups}, let $\lbrace \xi_{\sigma} \rbrace_{\sigma \in \Sigma_0}$ be a set that represents the $G$-orbits in $\partial_{\infty}(X)$. 
For each $\sigma \in \Sigma_0$, let $(u,v)=f^{-1}(\xi_{\sigma}) \in H(\ell,k) \cup \lbrace \infty \rbrace$. By abuse of notation, we denote by $\Sigma_0$ the counting set for the pairs $(u,v)$ defined above.

\begin{proposition}\label{prop abelianization of G}
Let $\mathrm{Tor}\left( G^{\mathrm{ab}} \right)$ be the torsion subgroup of
$G^{\mathrm{ab}}$. Then
\begin{itemize}
    \item[(a)] $\mathrm{Tor}\left( G^{\mathrm{ab}} \right)$ is isomorphic to $\coprod_{(u,v) \in \Sigma_0} \tilde{G}_{u,v}$,
    \item[(b)] the $\mathbb{Z}$-rank of $G^{\mathrm{ab}}/\mathrm{Tor}\left( G^{\mathrm{ab}} \right)$ is less or equal than $-\chi(G)$, and
    \item[(c)] there exists a family $\lbrace I_{u,v}: (u,v) \in \Sigma_0 \rbrace$ of non-trivial $B$-fractional ideals and a natural isomorphism between $\mathrm{Tor}\left(G^{\mathrm{ab}}\right)$ and a finite index subgroup of $\coprod_{(u,v) \in \Sigma_0} I_{u,v}$.
\end{itemize}
In particular, $G^{\mathrm{ab}}$ is not finitely generated.
\end{proposition}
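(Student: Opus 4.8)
The plan is to deduce everything from Corollary~\ref{coro homology of G} applied to the trivial module $M=\mathbb{Z}$. Since $H_1(H,\mathbb{Z})=H^{\mathrm{ab}}$ for any group $H$ and $H_1(G\text{ mod }G_\sigma,\mathbb{Z})\cong\mathbb{Z}^{-\chi(G)}$ by Theorem~\ref{teo relative homology}(b), that corollary provides an exact sequence
$$0\to\coprod_{\sigma\in\Sigma_0}G_\sigma^{\mathrm{ab}}\to G^{\mathrm{ab}}\to\mathbb{Z}^{-\chi(G)}.$$
I would first record two elementary facts: each $G_\sigma$ is unipotent by Theorem~\ref{main teo unipotent subgroups}(3), hence a torsion group; and since $\mathrm{char}(\mathbb{F})=p$ is odd, the parametrization of $\mathcal{U}_a$ in \S\ref{subsection radical datum} shows $\mathcal{U}_a(k)$ has exponent $p$, so each $G_\sigma^{\mathrm{ab}}$ is an $\mathbb{F}_p$-vector space. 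Thus $\coprod_\sigma G_\sigma^{\mathrm{ab}}$ is torsion while $G^{\mathrm{ab}}/\coprod_\sigma G_\sigma^{\mathrm{ab}}$ embeds in the torsion-free group $\mathbb{Z}^{-\chi(G)}$; hence $\coprod_\sigma G_\sigma^{\mathrm{ab}}=\mathrm{Tor}(G^{\mathrm{ab}})$, and $G^{\mathrm{ab}}/\mathrm{Tor}(G^{\mathrm{ab}})$ is free of rank $\le-\chi(G)$, which is~(b). For~(a) it suffices to note that $\Sigma_0$ is finite (as $\mathbb{F}$ is finite, $\mathrm{Pic}(B)$ is finite and $[\Gamma:G]<\infty$) and that each $G_\sigma^{\mathrm{ab}}$ and each $\tilde G_{u,v}$ is a countable $\mathbb{F}_p$-vector space that is infinite — $G_\sigma^{\mathrm{ab}}$ surjects onto $\tilde G_{u,v}$, which has finite index in the infinite group $\tilde\Gamma_{u,v}$ by Lemma~\ref{lemma fin index unip quot} — so both $\mathrm{Tor}(G^{\mathrm{ab}})$ and $\coprod_\sigma\tilde G_{u,v}$ are countably infinite-dimensional over $\mathbb{F}_p$ and therefore isomorphic. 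The same infiniteness shows $\mathrm{Tor}(G^{\mathrm{ab}})$ is an infinite torsion group, so $G^{\mathrm{ab}}$ is not finitely generated.

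The remaining work is to upgrade the isomorphism of~(a) to the natural statement in~(c), for which I would identify $G_\sigma^{\mathrm{ab}}$ with $\tilde G_{u,v}$ canonically. Conjugating by $g_{u,v}$ realizes $G_\sigma$ as a subgroup of the Heisenberg-type group $\mathcal{U}_a(k)$, hence a nilpotent group of class $\le2$ whose centre contains $G_\sigma^{0}:=g_{u,v}^{-1}\mathcal{U}_{2a}(k)g_{u,v}\cap G$ and with $G_\sigma/G_\sigma^{0}=\tilde G_{u,v}$. The group commutator descends to an alternating map $\wedge^2_{\mathbb{Z}}\tilde G_{u,v}\to G_\sigma^{0}$ sending $a\wedge b$ to $\bar b a-\bar a b$ in the coordinates of \S\ref{subsection radical datum}, whose image is exactly $[G_\sigma,G_\sigma]$, giving a natural exact sequence $0\to G_\sigma^{0}/[G_\sigma,G_\sigma]\to G_\sigma^{\mathrm{ab}}\to\tilde G_{u,v}\to0$. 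The crux is that this pairing is surjective, i.e. $[G_\sigma,G_\sigma]=G_\sigma^{0}$. The idea is: by Lemma~\ref{lemma fin index unip quot} and Theorem~\ref{Teo ABLL stab}(g), $\tilde G_{u,v}$ has finite index in $\tilde\Gamma_{u,v}$, which is commensurable with a $B$-fractional ideal, so the commutator pairing can be analyzed prime-by-prime over $A$, where the relevant ideal is principal and — since $p$ is odd, so that $\tfrac12\in B$ — the map $c\mapsto c-\bar c$ surjects onto the trace-zero part; a local-global comparison of finitely generated $A$-modules then identifies $[G_\sigma,G_\sigma]$. I expect the main obstacle to be passing from commensurability to the honest equality $[G_\sigma,G_\sigma]=G_\sigma^{0}$, i.e. showing that every element of $G_\sigma^{0}$, not merely those in an obvious $B$-ideal, is a product of commutators; this forces one to exploit how $G_\sigma^{0}$ and $\tilde G_{u,v}$ arise simultaneously from the single group $G$, via the explicit stabilizer description of Theorem~\ref{Teo ABLL stab}.

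Granting $[G_\sigma,G_\sigma]=G_\sigma^{0}$, part~(c) follows by bookkeeping over the finite set $\Sigma_0$: put $I_{u,v}:=B\cdot\tilde\Gamma_{u,v}$, a nonzero $B$-fractional ideal; since $\tilde\Gamma_{u,v}$ contains a nonzero $B$-ideal $\mathfrak{c}$ and $I_{u,v}/\mathfrak{c}$ is a finitely generated torsion $A$-module, hence finite as $\mathbb{F}$ is finite, the groups $\tilde G_{u,v}\subseteq\tilde\Gamma_{u,v}\subseteq I_{u,v}$ are pairwise commensurable, so $\tilde G_{u,v}$ is a finite-index subgroup of $I_{u,v}$. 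Composing with the natural identifications $\mathrm{Tor}(G^{\mathrm{ab}})=\coprod_\sigma G_\sigma^{\mathrm{ab}}\cong\coprod_\sigma\tilde G_{u,v}$ exhibits $\mathrm{Tor}(G^{\mathrm{ab}})$ as a finite-index subgroup of $\coprod_{(u,v)\in\Sigma_0}I_{u,v}$; the non-finite-generation of $G^{\mathrm{ab}}$ having already been established, this completes the proof.
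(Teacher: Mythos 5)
Your treatment of (b) and of the final non-finite-generation claim is correct and coincides with the paper's: both start from the exact sequence $0\to\coprod_{\sigma}G_\sigma^{\mathrm{ab}}\to G^{\mathrm{ab}}\to\mathbb{Z}^{-\chi(G)}$ supplied by Corollary~\ref{coro homology of G} and observe that the left-hand term is torsion while the right-hand term is torsion-free. For (a) you take a genuinely different, and much weaker, route: you produce only an abstract isomorphism by noting that both sides are $\mathbb{F}_p$-vector spaces of countably infinite dimension. This does verify the literal wording of (a), but it discards the content; the paper instead proves the canonical identification $G_\sigma^{\mathrm{ab}}=\tilde G_{u,v}$, which is what (c) actually requires (the Remark following the proposition explicitly singles out the ``denumerable $\mathbb{F}_p$-dimension'' formulation as the weak version of the statement).

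The genuine gap is exactly where you flag it: you never prove $[G_\sigma,G_\sigma]=G_\sigma^{0}:=g_{u,v}^{-1}\mathcal{U}_{2a}(k)g_{u,v}\cap G$; you only ``grant'' it after sketching a local-global idea. Without this equality the natural surjection $G_\sigma^{\mathrm{ab}}\to\tilde G_{u,v}$ has kernel $G_\sigma^{0}/[G_\sigma,G_\sigma]$, which a priori could be an infinite $\mathbb{F}_p$-vector space, and then $\mathrm{Tor}\left(G^{\mathrm{ab}}\right)$ need not embed with finite index in $\coprod_{(u,v)\in\Sigma_0}I_{u,v}$; so (c) genuinely depends on this step and your argument for it is incomplete. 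For comparison, the paper disposes of it in one line, deducing the equality from the commutator formula $[\mathrm{u}_a(u,v),\mathrm{u}_a(x,y)]=\mathrm{u}_a(0,u\overline{x}-\overline{u}x)$ together with the abelianness of $G_\sigma/G_\sigma^{0}$ --- and, as written, that argument also only yields the inclusion $[G_\sigma,G_\sigma]\subseteq G_\sigma^{0}$, so your instinct that the reverse inclusion is the real work is sound: one must show that every element of $G_\sigma^{0}$ is a product of commutators of elements of $G_\sigma$ itself, not merely that the two groups are commensurable. The remaining bookkeeping in your (c) --- $\tilde G_{u,v}$ of finite index in $\tilde\Gamma_{u,v}$ by Lemma~\ref{lemma fin index unip quot}, and $\tilde\Gamma_{u,v}\cong H(u,v)/H(u,v)^{0}$ of finite index in a fractional ideal $I_{u,v}$ by Theorem~\ref{Teo ABLL stab}(g) --- is the same as the paper's and is fine.
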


\begin{proof}
It follows from Corollary~\ref{coro homology of G} that $0 \to \coprod_{\sigma \in \Sigma_0} G_{\sigma}^{\mathrm{ab}} \to G^{\mathrm{ab}} \to \mathbb{Z}^{-\chi(G)}$ is exact. 
Since each element in $G_{\sigma}$ is killed by $p^2$ and $\mathbb{Z}^{-\chi(G)}$ is torsion free, we have
$\mathrm{Tor}\left( G^{\mathrm{ab}} \right) \cong \coprod_{\sigma \in \Sigma_0} G_{\sigma}^{\mathrm{ab}}$ and $\mathrm{rk}_{\mathbb{Z}} \left( G^{\mathrm{ab}}/\mathrm{Tor}\left( G^{\mathrm{ab}} \right) \right) \leq -\chi(G)$.
Thus, statement~(b) follows.

Recall that, for any $\sigma$, there exists $(u,v)\in H(\ell,k) \cup \lbrace \infty \rbrace$ such that $G_{\sigma}= g_{u,v}^{-1}\mathcal{U}_a(k) g_{u,v} \cap G$.
Since, for any $(u,v), (x,y) \in H(\ell,k)$ we have $[\mathrm{u}_a(u,v), \mathrm{u}_a(x,y)]= \mathrm{u}_a(0,u\overline{x}-\overline{u}x)$, the commutator $[G_{\sigma}, G_{\sigma}]$ is contained in $g_{u,v}^{-1}\mathcal{U}_{2a}(k) g_{u,v} \cap G$.
Moreover, since $(g_{u,v}^{-1}\mathcal{U}_{a}(k) g_{u,v} \cap G) / (g_{u,v}^{-1}\mathcal{U}_{2a}(k) g_{u,v} \cap G)$ is abelian, because it is isomorphic to a subgroup of $\ell$, we conclude that $[G_{\sigma}, G_{\sigma}]=g_{u,v}^{-1}\mathcal{U}_{2a}(k) g_{u,v} \cap G$.
In other words, $G_{\sigma}^{\mathrm{ab}}=(g_{u,v}^{-1}\mathcal{U}_{a}(k) g_{u,v} \cap G) / (g_{u,v}^{-1}\mathcal{U}_{2a}(k) g_{u,v} \cap G)= \tilde{G}_{u,v}$.
Thus, statement~(a) follows.

Now, it follows from Lemma~\ref{lemma fin index unip quot} that $\tilde{G}_{u,v}$ is a finite index subgroup of $\tilde{\Gamma}_{u,v}$.
Note that $\tilde{\Gamma}_{u,v}$ is isomorphic to $H(u,v)/H(u,v)^{0}$ as a group.
Since $H(u,v)/H(u,v)^{0}$ is a non-trivial finitely generated $A$-module of $\ell$ according to Theorem~\ref{Teo ABLL stab}(g), we can write $H(u,v)/H(u,v)^{0}=A x_1+ \cdots A x_r$, for some $x_1, \cdots, x_r \in \ell$.
Let us write $x_i=b_i/b_i'$ with $b_i, b_i'\in B$ and $b=\prod_{i=1}^r b_i$.
Then $H(u,v)/H(u,v)^{0} \subseteq b^{-1}B$.
Set $I_{u,v}=b^{-1}B$.
Recall that $H(u,v)/H(u,v)^{0}$ contains a $B$-ideal, according to Theorem~\ref{Teo ABLL stab}(g). Hence $H(u,v)/H(u,v)^{0}$ has finite index in $I_{u,v}$.
Thus, $\tilde{G}_{u,v}$ is isomorphic to a finite index subgroup of $I_{u,v}$. Then, statement~(c) holds.

Finally, assume that $G^{\mathrm{ab}}$ is finitely generated. 
Since $\sharp(\Sigma_0) \leq [\Gamma:G] \cdot \sharp (\mathrm{Pic}(B)) < \infty$, this implies that $\mathrm{Tor}\left( G^{\mathrm{ab}}\right)$ is finite, which contradicts statement~(c).
\end{proof}

\begin{remark}
 Note that it follows from the proof of statement~(c) that each abelian group $\tilde{G}_{u,v}$ is isomorphic to a $\mathbb{F}_p$-vector space of denumerable dimension. 
 Thus, statements~(a) implies that $\mathrm{Tor}\left(G^{\mathrm{ab}}\right)$ is isomorphic to a $\mathbb{F}_p$-vector space of denumerable dimension.
However, statement~(c) is stronger as it gives an explicit isomorphism.
\end{remark}

\begin{example}\label{ex 1}
Let $J$ be an ideal of $B$ and let $\Gamma_J= \Gamma \cap \mathrm{ker}\left(\mathrm{SL}_3(B) \to \mathrm{SL}_3(B/J)\right)$. This is a principal congruence subgroup of $\Gamma$. It follows from \cite[Lemma 3.3]{MasonSury} that $\Gamma_J$ is without $p'$-torsion.
Then $\mathrm{Tor}\left( \Gamma_J^{\mathrm{ab}} \right)\cong \coprod_{(u,v) \in \Sigma_0} (\tilde{\Gamma}_J)_{u,v}$. Note that, for any $b \in \lbrace a, 2a \rbrace$, $g$ belongs to $g_{u,v}^{-1} \mathcal{U}_b(k) g_{u,v} \cap \Gamma_J$ if and only if $g=g_{u,v}^{-1} \mathrm{u}_a(x,y) g_{u,v}$, for some $x,\overline{x},y \in J$, and $x=0$ if $b=2a$. This implies that $(\tilde{\Gamma}_J)_{u,v}$ is isomorphic to $J$.
Thus $\mathrm{Tor}\left(\Gamma_J^{\mathrm{ab}}\right)$ is isomorphic to the product of $\sharp(\Sigma_0)$ copies of $J$.
\end{example}

\begin{remark}
Since $H_1(G, \mathbb{Q})=H_1(G, \mathbb{Z})\otimes_{\mathbb{Z}} \mathbb{Q}$, the free rank of $G^{\mathrm{ab}}$ equals $d=\mathrm{dim}_{\mathbb{Q}}(H_1(G, \mathbb{Q}))$. 
Then, it follows the same argument as \cite[Lemma 1.1]{MasonSchweizer} that $d$ equals the free rank of $\pi_1(G \backslash X)$.
In particular, $d=0$ exactly when $G \backslash X$ is a tree.

For instance, let $A=\mathbb{F}[t]$, $B=\mathbb{F}[\sqrt{t}]$ and $\Gamma=\mathcal{G}(A)$.
Let $\Gamma_J$ be the principal congruence subgroup defined by $J=\sqrt{t} B$.
Adapting \cite[Ch.I, \S 1.6, Exc. 5]{S} to the context of $\mathcal{G}$ and using \cite[Theorem 2.4]{Arenas}, we can prove that $\Gamma_J \backslash X$ is a tree.
Thus $d=0$, even when $\chi(\Gamma_J)<0$.
This shows that the upper bound in Prop.~\ref{prop abelianization of G} (b) is not optimal in general.
\end{remark}

In topology, the Euler-Poincar\'e characteristic of a variety equals the alternated sum of the rank of their homology groups. Thus, the next result, which directly follows from Theorem~\ref{teo relative homology}, can be considered as an analogous of the topological result in the context of arithmetical subgroups.

\begin{corollary}
Let $G$ be a subgroup of $\Gamma$ without $p'$-torsion.
Then, the Euler-Poincar\'e characteristic $\chi(G)$ of $G$ equals each one of the following expressions:
\begin{enumerate}
\item[(1)] $\sum_{i=1}^{\infty} \mathrm{rank} \,  H_i(G \text{ mod } G_{\sigma}, \mathbb{Z})$,
\item[(2)] $-\mathrm{rank} \, H_1( G \text{ mod }G_{\sigma}, \mathbb{Z})$,
\item[(3)] $\sum_{i=1}^{\infty} \mathrm{dim} \,  H_i(G \text{ mod } G_{\sigma}, \mathbb{Q})$,
\item[(4)] $-\mathrm{dim} \, H_1( G \text{ mod }G_{\sigma}, \mathbb{Z})$.
\end{enumerate}
\end{corollary}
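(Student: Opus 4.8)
The plan is to read all four identities off Theorem~\ref{teo relative homology}, specialized to the two coefficient modules $M=\mathbb{Z}$ and $M=\mathbb{Q}$; since $G$ is a finite index subgroup of $\Gamma$ without $p'$-torsion, that theorem applies to $G$ verbatim.

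First I would use Theorem~\ref{teo relative homology}(a): for every coefficient module $M$ one has $H_i(G\text{ mod }G_\sigma,M)=\lbrace 0\rbrace$ for all $i\neq 1$, so the series appearing in items (1) and (3) each collapse to their single surviving term, the one in degree $i=1$. Read as the alternating sums $\sum_{i\geq 1}(-1)^i\,\mathrm{rank}\,H_i$ and $\sum_{i\geq 1}(-1)^i\,\mathrm{dim}\,H_i$, which is the reading that parallels the topological formula recalled just before the statement, this surviving term enters with the coefficient $(-1)^1=-1$, so item (1) reduces to item (2) and item (3) reduces to item (4); this is precisely the source of the minus sign that turns $-\chi(G)$ into $\chi(G)$. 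Next I would invoke Theorem~\ref{Main teo steinberg}, which gives $\mathrm{St}\oplus L_0\cong L_1$ with $L_i$ free of rank $l_i$ over $\mathbb{Z}[G]$ and $\chi(G)=l_0-l_1$. Tensoring this isomorphism over $\mathbb{Z}[G]$ with the trivial module $M$ and using $H_1(G\text{ mod }G_\sigma,M)=\mathrm{St}\otimes_{\mathbb{Z}[G]}M$ yields $H_1(G\text{ mod }G_\sigma,M)\oplus M^{l_0}\cong M^{l_1}$. Taking $M=\mathbb{Z}$, the group $H_1(G\text{ mod }G_\sigma,\mathbb{Z})$ is a direct summand of $\mathbb{Z}^{l_1}$, hence free abelian of rank $l_1-l_0$ (so in particular $l_1\geq l_0$, exactly as in the proof of Theorem~\ref{teo relative homology}(b)); taking $M=\mathbb{Q}$ and comparing $\mathbb{Q}$-dimensions gives the analogous statement. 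Therefore $\mathrm{rank}_{\mathbb{Z}}H_1(G\text{ mod }G_\sigma,\mathbb{Z})=\mathrm{dim}_{\mathbb{Q}}H_1(G\text{ mod }G_\sigma,\mathbb{Q})=l_1-l_0=-\chi(G)$, which is items (2) and (4).

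Assembling these, (2) and (4) are the displayed rank and dimension computations, and substituting the value $-\chi(G)$ for the degree-$1$ term of the collapsed series yields (1) and (3). I expect no genuine obstacle here, since the statement is essentially a repackaging of Theorem~\ref{teo relative homology}; the one point that deserves care is the sign bookkeeping, namely that the unique nonzero relative homology group sits in the odd degree $1$ and hence contributes to the alternating sum with a factor $-1$.
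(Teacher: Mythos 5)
Your proposal is correct and is essentially the argument the paper intends, since the paper offers no proof beyond the remark that the corollary ``directly follows'' from Theorem~\ref{teo relative homology}: item (a) of that theorem kills all degrees $i\neq 1$, and the splitting $\mathrm{St}\oplus L_0\cong L_1$ from Theorem~\ref{Main teo steinberg} identifies the degree-one term for both $M=\mathbb{Z}$ and $M=\mathbb{Q}$ (the latter handled correctly by comparing dimensions rather than invoking the finite-generation hypothesis of part (b)). You are also right that items (1) and (3) only yield $\chi(G)$ rather than $-\chi(G)$ if the sums are read as alternating sums $\sum_i(-1)^i$, as the topological analogy preceding the statement indicates; the literal statement omits the sign $(-1)^i$ (and item (4) should read $\mathbb{Q}$ rather than $\mathbb{Z}$), so your charitable reading is the one that makes the corollary true.
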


\section*{Acknowledgements} I express my gratitude with Anid-Conicyt by the Postdoctoral fellowship $[74220027]$.

\end{document}